\theoremstyle{plain}
\newtheorem{theorem}{Theorem}[section]
\newtheorem{proposition}[theorem]{Proposition}
\newtheorem{lemma}[theorem]{Lemma}
\theoremstyle{definition}
\newtheorem{definition}[theorem]{Definition}
\newtheorem{notation}[theorem]{Notation}
\theoremstyle{remark}
\newtheorem{remark}[theorem]{Remark}
\newtheorem{example}[theorem]{Example}
\newcommand{\bN}{\mathbb{N}}
\newcommand{\bR}{\mathbb{R}}
\newcommand{\bZ}{\mathbb{Z}}
\newcommand{\bT}{\mathbb{T}}
\newcommand{\bv}{\pmb{v}}
\newcommand{\vol}{{\mathrm{vol}}}
\newcommand{\mComp}{{\mathrm{mComp}}}
\newcommand{\fComp}{{\mathrm{fComp}}}
\newcommand{\Conv}{\mathop{\mathrm{Conv}}\nolimits}
\newcommand{\Newt}{\mathop{\mathrm{Newt}}\nolimits}
\newcommand{\cF}{\mathcal{F}}
\newcommand{\Div}{\mathrm{Div}}
\newcommand{\Divp}{\mathrm{Div}^+}
\newcommand{\Divs}{\mathrm{Div}^{\#}}
\newcommand{\bi}{\mathbf{i}}
\title[Minimum volumes of tropical rational functions]
{Minimum volumes of tropical rational functions}
\author{Masayuki Sukenaga}
\address{
Department of Mathematics, Graduate School of Science, Hiroshima University, 
1-3-1 Kagamiyama, Higashi-Hiroshima, 739-8526 JAPAN}
\email{sukenaga@hiroshima-u.ac.jp}
\subjclass[2010]{Primary 14T05; Secondary 14H50}
\keywords{Tropical geometry; Tropical curve; Tropical rational function}
\begin{document}

\begin{abstract}
When a tropical rational function $\varphi$ on $\bR^n$ is given, we can represent it as $\varphi=f\oslash g$ with tropical polynomials $f$ and $g$.
We develop the duality theorem for tropical rational functions to define the volume of the pair $(f, g)$.
We show that when $n=1$, we can find a representation of $\varphi(x) \neq -\infty$ as $f(x)\oslash g(x)$ with the pair $(f, g)$ of minimum volume.
The dual subdivision of $f(x)\oplus(y\odot g(x))$ is unique up to translation, but when $n=2$ this is not true.
\end{abstract}

\maketitle

\section{Introduction}

A tropical rational function is a function expressed in the form $f\oslash g\ (=f-g)$ by tropical polynomials $f$ and $g\neq -\infty$.
A tropical rational function is also called a piecewise integer affine function (see \cite[Corollary 4.4.12]{MR}), and this has been studied in fields such as machine learning and statistics.
In \cite{TW}, they give two notions of complexity, monomial complexity for a tropical polynomial and factorization complexity for a finite sequence of tropical polynomials.
Furthermore, they define the preorder $\leq_{\mComp}$ on the set of pairs of tropical polynomials as $(f_1, g_1)\leq_{\mComp}(f_2, g_2)$ if and only if $\mComp(f_1)\leq \mComp(f_2)$ and $\mComp(g_1)\leq \mComp(g_2)$, where $\mComp$ is a function $\mComp: f\mapsto n\in \bN$ that measures the complexity of a tropical polynomial.
They considered how to find a pair that is small under these orders.
The problem is that there are many pairs that are not comparable.
In this paper, we solve this problem by considering the volume of a pair of tropical polynomials.
It is well known that for a tropical polynomial $f$, the tropical hypersurface $V(f)$ and the subdivision of the Newton polytope of $f$ are dual.
We show the same duality theorem tropical rational functions.
This is well known for the experts.

\begin{theorem}(=Theorem \ref{f/g})
Let $\varphi$ be a tropical rational function and $f\in\bT[x_1^{\pm1}, \dots, x_n^{\pm1}]$ and $g\in\bT[x_1^{\pm1}, \dots, x_n^{\pm1}]\setminus \{-\infty\}$ tropical polynomials satisfying $\varphi=f\oslash g$.
Then, we have the following:
\begin{eqnarray*}
V(f\oplus (x_{n+1}\odot g))&=&\{(\mathbf{x}, \varphi(\mathbf{x}))\in \bR^{n+1}\ |\ \mathbf{x} \in \bR^n, \varphi(\mathbf{x})\neq -\infty\}\\
&&\cup \{(\mathbf{x}, x_{n+1})\in \bR^{n+1}\ |\ \mathbf{x}\in V(f), x_{n+1}< \varphi(\mathbf{x})\}\\
&&\cup \{(\mathbf{x}, x_{n+1})\in \bR^{n+1}\ |\ \mathbf{x}\in V(g), x_{n+1}> \varphi(\mathbf{x})\}.
\end{eqnarray*}
\end{theorem}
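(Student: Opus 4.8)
The plan is to verify the asserted equality pointwise, using a single structural fact: membership in a tropical hypersurface is local, i.e.\ for a tropical polynomial $h$ a point $p$ lies in $V(h)$ exactly when $h$ fails to be affine on every neighbourhood of $p$, so if $h$ agrees with another tropical polynomial on an open set $U$, then $V(h)\cap U$ is determined by that common restriction. Set $h:=f\oplus(x_{n+1}\odot g)$, regarded as the function $(\mathbf{x},t)\mapsto\max\{f(\mathbf{x}),\,t+g(\mathbf{x})\}$ on $\bR^{n+1}$. Since $f,g$ are Laurent polynomials they take finite values at every point of $\bR^n$ unless $f\equiv-\infty$; assuming first $f\neq-\infty$, so that $\varphi$ is a genuine continuous piecewise integer affine function on $\bR^n$, I would record the elementary trichotomy
\begin{eqnarray*}
f(\mathbf{x})>t+g(\mathbf{x})&\iff&t<\varphi(\mathbf{x}),\\
f(\mathbf{x})=t+g(\mathbf{x})&\iff&t=\varphi(\mathbf{x}),\\
f(\mathbf{x})<t+g(\mathbf{x})&\iff&t>\varphi(\mathbf{x}),
\end{eqnarray*}
which partitions $\bR^{n+1}$ into the open region strictly below the graph of $\varphi$, the graph itself, and the open region strictly above it. The claim will be that $V(h)$ meets these three pieces in the second, first, and third of the sets on the right-hand side, respectively.

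For the two open pieces I would argue as follows. Every point of $\{f>x_{n+1}\odot g\}$ has a neighbourhood contained in that region, on which $h$ equals the pullback $\pi^{*}f$ of $f$ along the projection $\pi\colon\bR^{n+1}\to\bR^{n}$; hence $h$ is affine near $(\mathbf{x},t)$ iff $f$ is affine near $\mathbf{x}$, and $V(h)\cap\{f>x_{n+1}\odot g\}=\{(\mathbf{x},t)\mid \mathbf{x}\in V(f),\ t<\varphi(\mathbf{x})\}$. Symmetrically, on $\{f<x_{n+1}\odot g\}$ the function $h$ equals $x_{n+1}\odot g$ locally, and since $x_{n+1}\odot g$, as a function of $(\mathbf{x},t)$, is affine near a point iff $g$ is affine near its image under $\pi$, we get $V(h)\cap\{f<x_{n+1}\odot g\}=\{(\mathbf{x},t)\mid \mathbf{x}\in V(g),\ t>\varphi(\mathbf{x})\}$.

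It then remains to show that the entire graph $\{(\mathbf{x},\varphi(\mathbf{x}))\}$ is contained in $V(h)$. Fixing $p=(\mathbf{x}_{0},\varphi(\mathbf{x}_{0}))$ and assuming for contradiction that $h$ agrees with an affine function $\mathbf{c}\cdot\mathbf{y}+c_{n+1}t+d$ on a neighbourhood $W$ of $p$, I note that $W$ meets the open region $\{t<\varphi(\mathbf{y})\}$ --- take $t$ slightly below $\varphi(\mathbf{x}_{0})$ --- where $h(\mathbf{y},t)=f(\mathbf{y})$ does not depend on $t$, forcing $c_{n+1}=0$; and $W$ meets $\{t>\varphi(\mathbf{y})\}$, where $h(\mathbf{y},t)=t+g(\mathbf{y})$, forcing $c_{n+1}=1$. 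This contradiction shows $p\in V(h)$, whence the middle piece of $V(h)$ is exactly the graph of $\varphi$; assembling the three pieces gives the stated formula. The degenerate case $f=-\infty$ is immediate and compatible: then $\varphi\equiv-\infty$, the first two sets on the right are empty, and $h=x_{n+1}\odot g$ has $V(h)=V(g)\times\bR$, which matches the third set.

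The only point I expect to require real care is this last one: the reason the \emph{whole} graph of $\varphi$, rather than merely the image of the non-smooth locus of $\varphi$, lands in $V(h)$ is the clash "$c_{n+1}=0$ below the graph versus $c_{n+1}=1$ above it", which is precisely the trace of $x_{n+1}$ entering $x_{n+1}\odot g$ with exponent $1$. Everything else --- the trichotomy, the locality of $V(\cdot)$, and the identification of $h$ with $\pi^{*}f$, respectively $x_{n+1}\odot g$, on the two open regions --- is routine once it is set up this way.
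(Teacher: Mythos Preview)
Your proof is correct and follows the same overall structure as the paper's: both split $\bR^{n+1}$ via the trichotomy $t<\varphi(\mathbf{x})$, $t=\varphi(\mathbf{x})$, $t>\varphi(\mathbf{x})$, handle the degenerate case $f=-\infty$ separately, and verify the three pieces of $V(h)$ one at a time.

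The only genuine difference is the characterisation of $V(h)$ you work with. The paper uses the algebraic definition directly (a point lies in $V(h)$ iff at least two monomials achieve the maximum), so for the open regions it observes that the dominating terms all come from $f$ (resp.\ from $x_{n+1}\odot g$), and for the graph it exhibits one term from $f$ and one from $x_{n+1}\odot g$ that tie. You instead use the equivalent geometric characterisation ($V(h)$ is the locus where $h$ is not locally affine), which lets you argue purely in terms of the restriction of $h$ to small open sets and finish the graph case by the clean ``$c_{n+1}=0$ versus $c_{n+1}=1$'' contradiction. Your version avoids ever naming individual monomials, which is slightly tidier; the paper's version is a touch more self-contained since it does not invoke the smooth-locus description. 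Both are routine once the trichotomy is in place, and the key observation---that terms of $f$ and terms of $x_{n+1}\odot g$ are distinguished by their $x_{n+1}$-exponent---is exactly what drives both arguments.
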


In ordinary algebraic geometry, the graph of a rational function $f/g$ is the zero points of $f-x_{n+1}g$.
The above theorem says that the image of a tropical rational function $f\oslash g$ is, in a sense, dual to the dual subdivision of the polynomial with one more variable $f\oplus (x_{n+1}\odot g)$.
This is the reason why the volume of a pair of tropical polynomials $(f, g)$ is defined as the volume of $\Newt(f\oplus (x_{n+1}\odot g))$.
Here, the number of lattice points in $\Newt(f\oplus (x_{n+1}\odot g))$ is equal to the sum of the numbers of lattice points in $\Newt(f)$ and $\Newt(g)$.
Note that the smaller the volume of $\Newt(f\oplus (x_{n+1}\odot g))$, the fewer the number of lattice points in $\Newt(f\oplus (x_{n+1}\odot g))$ tends to be, and consequently the fewer the number of lattice points in $\Newt(f)$ and $\Newt(g)$, so the monomial complexities of $f$ and $g$ also tend to be smaller.
It was shown in \cite{TW} that for a one-variable tropical rational function $\varphi(x)$, there is a unique representation $\varphi(x)=f(x)\oslash g(x)$ that minimizes both the monomial complexity and the factorization complexity.
There is, in a sense, a unique representation $\varphi(x)=f(x)\oslash g(x)$ that minimizes the volume of $(f, g)$.
This is the first main theorem.

\begin{theorem}(=Theorem \ref{thm_main1})
For any tropical rational function $\varphi(x) \neq -\infty$ on $\bR$, there is an expression $\varphi(x)=f(x)\oslash g(x)$ with the minimum volume.
If $f'(x)$ and $g'(x)$ also satisty $\varphi(x)=f'(x)\oslash g'(x)$ and $\vol(f', g')=\vol(f, g)$, then the dual subdivision of $f\oplus(y\odot g)$ is a translation of the dual subdivision of $f\oplus(y\odot g)$.
In other words, the dual subdivision of $f\oplus(y\odot g)$ is unique up to translation.
\end{theorem}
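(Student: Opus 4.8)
The strategy is to reduce the statement about minimum-volume representations to the combinatorics of the dual subdivision of $F = f\oplus(y\odot g)$, which by Theorem~\ref{f/g} is governed by the piecewise-linear graph of $\varphi$. First I would recall that for a one-variable $\varphi$, the graph $\{(x,\varphi(x))\}$ together with the two vertical rays over $V(f)$ and $V(g)$ is a one-dimensional tropical curve in $\bR^2$, dual to $\Subdiv(F)$. Thus the Newton polygon $\Newt(F)$ is a lattice polygon whose edges record the slopes of $\varphi$ at $\pm\infty$ and whose bounded edges in the dual subdivision correspond to the vertices (breakpoints) of $\varphi$ and to the points of $V(f)$, $V(g)$. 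The key observation is that $V(f)$ and $V(g)$ are \emph{not} intrinsic to $\varphi$: at a breakpoint where $\varphi$ is convex we are free to route it through $V(f)$, and where concave through $V(g)$, but at a breakpoint we may also insert \emph{extra} coincident zeros of $f$ and $g$ (a factor appearing in both), which enlarges $\Newt(f)$ and $\Newt(g)$ simultaneously and hence enlarges $\Newt(F)$.

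**Construction of the minimizer.** So I would construct the canonical representation as follows: write $\varphi$ as a difference of a convex piecewise-linear function and a convex piecewise-linear function in the essentially unique ``reduced'' way (no common bend), i.e. let $f$ collect exactly the convex breakpoints of $\varphi$ and the slope data at the ends, and let $g$ collect exactly the concave breakpoints. Concretely, if $\varphi$ has breakpoints $x_1<\dots<x_k$ with slope increments $m_i\in\bZ\setminus\{0\}$ (positive = convex, negative = concave), take $f$ to be the tropical polynomial whose Newton segment realizes the positive increments and $g$ the one realizing the negated negative increments, with the horizontal extents chosen minimally to match the outer slopes of $\varphi$. One then checks, using Theorem~\ref{f/g}, that $\Subdiv(F)$ for this choice has $\Newt(F)$ a polygon of minimal area: any other representation $\varphi = f'\oslash g'$ arises from this one by (a) possibly moving some breakpoints between the ``$f$-side'' and ``$g$-side'' — impossible without creating a common bend, which strictly adds lattice length to both Newton polygons — or (b) multiplying $f',g'$ by a common tropical polynomial, which strictly enlarges $\Newt(F')$, or (c) adding a tropical-monomial (affine) shift, which translates but does not change volume. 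A clean way to organize (a)–(b) is a monotonicity lemma: $\Newt(f')\supseteq$ (a translate of) $\Newt(f)$ and $\Newt(g')\supseteq$ (a translate of) $\Newt(g)$ whenever $\varphi = f'\oslash g'$ is any representation and $(f,g)$ is the reduced one, from which $\vol(f',g')\geq\vol(f,g)$ is immediate and equality forces both containments to be equalities, i.e. forces $f',g'$ to be translates of $f,g$, hence $\Subdiv(F')$ a translate of $\Subdiv(F)$.

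**The main obstacle.** The hard part is the monotonicity lemma, i.e. showing that a common bend cannot be cancelled for free. The subtlety is that $\Newt(f')$ could in principle be a translate of $\Newt(f)$ with a \emph{different} interior subdivision, or the roles of $V(f)$ and $V(g)$ at a given breakpoint could be exchanged in a way that keeps areas equal; I need to rule this out by a careful bookkeeping of lattice lengths of edges of $\Newt(F)$ versus the total variation of the slope of $\varphi$ plus the contributions of $V(f),V(g)$. I expect this comes down to the identity that the vertical extent of $\Newt(F)$ equals (number of points of $V(g)$) $+\ 1$ on one side and the slope at $-\infty$ governs the bottom edge, combined with the fact — visible from the theorem — that the bounded rays of the curve over $V(f)$ force the corresponding dual edges to be horizontal of prescribed length; pushing these through gives that $\Newt(F)$ is determined up to translation by $\varphi$ alone precisely when no redundant common factor is present. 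Finally I would note that the last sentence of the theorem as stated contains a typo: the conclusion should read that $\Subdiv(f'\oplus(y\odot g'))$ is a translate of $\Subdiv(f\oplus(y\odot g))$, and I would state it that way in the proof.
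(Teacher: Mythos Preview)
Your plan is sound in outline and would work, but it takes a different and more laborious route than the paper. The paper's proof avoids your ``main obstacle'' entirely by using the tropical fundamental theorem of algebra: any one-variable tropical polynomial factors \emph{uniquely} (as a function) into linear factors $\alpha\, x^{i_0}\bigodot_s (x\oplus a_s)^{i_s}$. Writing both $f,g$ and $f',g'$ this way, minimality of volume forces the root sets of $f$ and $g$ (resp.\ $f'$ and $g'$) to be disjoint, because a common linear factor can be cancelled and strictly drops the trapezoid area $\vol(f,g)=\tfrac12(\sum i_s+\sum j_s)$. Then $f\oslash g=f'\oslash g'$ gives $f\odot g'=f'\odot g$ as functions, and unique factorization together with the two disjointness conditions immediately matches the linear factors of $f$ with those of $f'$ and those of $g$ with $g'$; the leftover monomial data yield a single translation vector for $\Delta_{f\oplus(y\odot g)}$.

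Your monotonicity idea is essentially the same content phrased geometrically (positive slope-increments of $\varphi$ lower-bound those of any numerator), and it does give $\vol(f',g')\ge\vol(f,g)$ cleanly once you note $\vol$ is half the sum of the Newton-interval lengths. Where your sketch gets shaky is the equality case: you still need that equal lengths force the \emph{same breakpoints with the same multiplicities} and that the residual monomial shifts on $f$ and $g$ agree, which is exactly what unique factorization hands you in one line. Also, your remark that ``the vertical extent of $\Newt(F)$ equals (number of points of $V(g))+1$'' is off: the $y$-extent of $\Newt(f\oplus(y\odot g))$ is always $1$; it is the \emph{horizontal} lengths of the two edges $\Newt(f)\times\{0\}$ and $\Newt(g)\times\{1\}$ that carry the information you want. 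Finally, you are right that the theorem's last sentence contains a typo; the intended conclusion is that $\Delta_{f'\oplus(y\odot g')}$ is a translate of $\Delta_{f\oplus(y\odot g)}$.
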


On the other hand, when $n>1$, there is a case where the minimum volume expression $\varphi=f\oslash g$ is not uniquely determined.
This is the second main result.

\begin{proposition}(=Proposition \ref{thm_main2})
There is a tropical rational function on $\bR^2$, which has two different positive minimum volume expressions.
\end{proposition}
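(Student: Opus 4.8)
The plan is to exhibit an explicit tropical rational function $\varphi$ on $\bR^2$ together with two representations $\varphi = f_1 \oslash g_1 = f_2 \oslash g_2$ such that (i) both pairs achieve the minimum volume of $\Newt(f \oplus (x_3 \odot g))$ over all representations, (ii) both volumes are positive, and (iii) the dual subdivisions of $f_1 \oplus (x_3 \odot g_1)$ and $f_2 \oplus (x_3 \odot g_2)$ are \emph{not} translates of one another. The natural place to look is the simplest setting where the graph of $\varphi$ is a genuinely two-dimensional piecewise-linear surface in $\bR^3$ with enough combinatorial freedom; concretely I would take $\varphi$ to have a small number of linear pieces — say three or four maximal domains of linearity meeting along a vertex — so that $V(f \oplus (x_3 \odot g))$ is a polyhedral surface whose Newton polytope is a small lattice polytope (a unit square or a triangle of normalized volume $1$ or $2$) admitting two essentially different dual subdivisions, e.g. the square split along either diagonal, or a triangle that can be subdivided in two combinatorially distinct ways.

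Concretely I would first pick $\varphi$, then write down a candidate pair $(f,g)$ and compute $\Newt(f \oplus (x_3 \odot g))$ and $\vol(f,g)$; by Theorem~\ref{f/g} the surface $V(f \oplus (x_3 \odot g))$ is forced to contain the graph of $\varphi$ (together with the ``vertical walls'' over $V(f)$ and $V(g)$), so the geometry of the graph strongly constrains which Newton polytopes can occur. I would then use the duality between $V(f \oplus (x_3 \odot g))$ and the dual subdivision of $\Newt(f \oplus (x_3 \odot g))$ to read off, from the combinatorics of the graph of $\varphi$, the possible dual subdivisions: a maximal cell of $V$ dual to a lattice point, an edge dual to an edge, etc. The key computation is a lower bound argument: one must show that \emph{every} representation $\varphi = f' \oslash g'$ has $\vol(f',g') \ge \vol(f,g)$, i.e. that the chosen pair is genuinely volume-minimal. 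For this I would argue that the graph of $\varphi$ and the positions of $V(f')$, $V(g')$ (which are determined by $\varphi$ — $V(f')$ is where $\varphi$ fails to be locally affine from below in a suitable sense) force the Newton polytope to contain a fixed sublattice configuration of a certain mixed volume, using the additivity $\#(\Newt(f) \cap \bZ^n) + \#(\Newt(g)\cap\bZ^n) = \#(\Newt(f\oplus(x_{n+1}\odot g)) \cap \bZ^{n+1})$ highlighted in the introduction and a standard Minkowski-sum / mixed-subdivision estimate.

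Then the final step is to produce the \emph{second} representation $\varphi = f_2 \oslash g_2$: I would modify $(f_1,g_1)$ by changing the coefficients of $f_1$ and $g_1$ in a way that leaves the difference $f_1 - g_1 = f_2 - g_2$ unchanged as a function on $\bR^2$ (e.g. by moving a wall of $V(f_1)$ and a compensating wall of $V(g_1)$, or by exploiting a ``cancelling'' monomial that is active in neither tropical max), arranged so that the resulting dual subdivision of $f_2 \oplus (x_3 \odot g_2)$ picks the other diagonal / the other combinatorial type. Finally I verify $\vol(f_2,g_2) = \vol(f_1,g_1)$ (this will be immediate if the two Newton polytopes are the same polytope with different subdivisions, or a short Euclidean volume check if they differ by a unimodular-but-not-translation transformation) and that the two subdivisions are not related by a translation — e.g. because their sets of edges have different slopes, which is a translation invariant. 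The main obstacle I anticipate is (ii)+the lower bound: ensuring the minimum volume is strictly positive (so the example is not degenerate, ruling out the trivial $f = \varphi \oplus c$, $g = c$ type representations which would have a one-dimensional Newton polytope) while still having genuine non-uniqueness — this is exactly why $n = 2$ is needed and $n = 1$ is not, so the example must be delicate enough that the one-variable rigidity of Theorem~\ref{thm_main1} visibly breaks.
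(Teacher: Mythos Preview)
Your high-level outline --- exhibit $\varphi$, give two representations, verify equal positive volume, prove minimality, check the subdivisions are not translates --- is the right shape. But there is a genuine gap, located in the lower-bound step, and it stems from a misconception earlier in the proposal.

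You write that ``$V(f')$ is where $\varphi$ fails to be locally affine from below in a suitable sense,'' suggesting $V(f')$ is essentially determined by $\varphi$. It is not: $\varphi$ determines only the \emph{difference} $V(f')\oslash V(g')$ as an element of $\Div$. The convex breaks of $\varphi$ must lie in $V(f')$ and the concave ones in $V(g')$, but $V(f')$ and $V(g')$ may contain additional, mutually cancelling pieces. This extra freedom is simultaneously the \emph{source} of non-uniqueness and the \emph{obstacle} to the lower bound, since you must control every way of adding such cancelling pieces. Your plan to bound the volume via the lattice-point identity $\#(\Newt(f)\cap\bZ^n)+\#(\Newt(g)\cap\bZ^n)=\#(\Newt(f\oplus(x_{n+1}\odot g))\cap\bZ^{n+1})$ and ``a standard Minkowski-sum / mixed-subdivision estimate'' does not do this: lattice-point counts are not volumes, and there is no off-the-shelf estimate that yields the needed bound.

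The paper's argument is structured as follows. One chooses $\varphi$ so that $V(f)\oslash V(g)$ contains, with coefficient $+1$, three rays $R_1,R_2,R_3$ whose primitive directions sum to zero (so their recession fan is that of the standard line, with $\Newt=\Delta_1:=\Conv\{(0,0),(1,0),(0,1)\}$) but which do \emph{not} emanate from a common point. Hence no translate of $V(x\oplus y\oplus 0)$ contains all three, and by balancing the recession fan of $V(f)$ must strictly enlarge that of the line. Via Lemma~\ref{bunkai} this forces $\Newt(f)$ to be a translate of $\Delta_1+\Delta$ for some lattice polytope $\Delta$ that is not a point, and likewise $\Newt(g)$ a translate of $\Delta_2+\Delta$ with $\Delta_2=\Conv\{(1,0),(0,1),(1,1)\}$. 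The lower bound then requires three bespoke lemmas (Lemmas~\ref{vollem}--\ref{vol5/3}): a Cavalieri-type invariance of the $3$-volume under horizontal translation of the top face, a classification of when $\mathrm{area}(\Delta_i+\Delta)\in\{3/2,2\}$ versus $\geq 5/2$, and a resulting bound $\vol(f,g)\geq 11/6$ whenever both areas are $\geq 2$. A finite case check on the eight subdivisions of $\Delta_1+\Conv\{(0,0),(1,0)\}$ eliminates one of the three unit segments of Lemma~\ref{area2}; the remaining two segments yield the two explicit pairs $(f_1,g_1)$ and $(f_2,g_2)$ with $\vol=5/3<11/6$.

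Finally, your heuristic of ``the same Newton polytope subdivided along either diagonal'' is the wrong picture: in the paper's example the two polytopes $\Newt(f_i\oplus(x_3\odot g_i))\subset\bR^3$ are genuinely different (not even $\mathrm{GL}_3(\bZ)$-equivalent in an obvious way), corresponding to the two different choices of the auxiliary segment $\Delta$. The non-uniqueness lives in the Minkowski summand $\Delta$, not in a choice of triangulation.
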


On minimal factorization complexities, the following open question (\cite[Open Question 2]{TW}) is presented: ``Do tropical polynomials have unique irreducible factorization with the minimal factorization complexity?''
We show that this is not true.

\begin{proposition}(=Proposition \ref{thm_main3})
The tropical polynomial $x^2y^3\oplus xy^4\oplus x^2y^2\oplus xy^3\oplus x^2y\oplus xy^2\oplus y^3\oplus xy\oplus y^2\oplus x\oplus y$ has two different irreducible factorizations with the minimal factorization complexity.
\end{proposition}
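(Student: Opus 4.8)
The plan is to read off the irreducible factorizations of $f$ from the Minkowski decompositions of its Newton polytope. First I would compute $P:=\Newt(f)$: it is the centrally symmetric lattice hexagon with vertices $(1,0),(2,1),(2,3),(1,4),(0,3),(0,1)$, and since every coefficient of $f$ equals $0$ its eleven lattice points are exactly the exponents of $f$ and the induced subdivision of $P$ is trivial. Recall that a factorization $f=g_1\odot\cdots\odot g_k$ forces a Minkowski decomposition $\Newt(g_1)+\cdots+\Newt(g_k)=P$ up to translation and, because all coefficients are $0$, forces in addition $\Supp(g_1)+\cdots+\Supp(g_k)=P\cap\bZ^2$; conversely such a decomposition with all $11$ lattice points realized gives back a factorization with all coefficients $0$. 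Thus I need to classify the lattice Minkowski decompositions of $P$ into indecomposable pieces that realize all $11$ lattice points.

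Since in dimension $\le 2$ the Minkowski-indecomposable polytopes are precisely the points, the primitive segments, and the triangles, and since every Minkowski summand of $P$ has all of its edge normals among the six outer edge normals $(\pm1,0),(\pm1,\pm1)$ of $P$, I would next enumerate the indecomposable building blocks: the primitive segments $\sigma_1,\sigma_2,\sigma_3$ in the directions $(1,1),(0,1),(-1,1)$, and (up to translation) exactly two indecomposable triangles $A=\Conv\{(0,0),(1,1),(0,2)\}$ and $B=\Conv\{(0,0),(0,2),(-1,1)\}$, which one finds by listing the triples of edge normals of $P$ that span $\bR^2$ positively and taking the unique minimal lattice scaling of each. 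Writing a putative decomposition as $P=a\,\sigma_1+b\,\sigma_2+c\,\sigma_3+d\,A+e\,B$ with $a,b,c,d,e\in\bZ_{\ge0}$ and comparing edge lengths normal by normal yields $a+d=1$, $c+d=1$, $a+e=1$, $c+e=1$, $b+2d=2$, $b+2e=2$, which force $d=e$, $a=c$ and leave exactly the two solutions $(a,b,c,d,e)=(1,2,1,0,0)$ and $(0,0,0,1,1)$, i.e. the decompositions $P=\sigma_1+\sigma_2+\sigma_2+\sigma_3$ and $P=A+B$. A short sumset computation then confirms that both realize all $11$ lattice points of $P$; for $A+B$ one moreover sees that the midpoint of the length-$2$ edge of each triangle is forced into the support, so each triangular factor has monomial complexity $4$ rather than $3$.

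This produces exactly two irreducible factorizations of $f$: one into four binomials, of factorization complexity $2+2+2+2=8$, and one into the two quadrinomials with Newton polytopes $A$ and $B$, of factorization complexity $4+4=8$. As these are the only two irreducible factorizations, both have the minimal factorization complexity, and they are plainly different (four factors and no triangular factor versus two triangular factors); this contradicts uniqueness in \cite[Open Question 2]{TW}.

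The step I expect to be the main obstacle is establishing completeness of the list of indecomposable summands of $P$ — ruling out a lattice polygon that is Minkowski-decomposable only through non-lattice pieces and could still occur as a summand. Concretely one must show that every lattice Minkowski summand of $P$ decomposes over $\bZ$ into the five blocks $\sigma_1,\sigma_2,\sigma_3,A,B$; because $P$ has only six edge normals and its edge-length data is small, this is a finite check on the non-negative integer solutions $\ell$ of the closing relation $\sum_n\ell(n)\,e_n=0$ subject to $\ell\le\ell_P$, but it is the part of the argument that must be carried out with care. Everything else is routine lattice-point bookkeeping.
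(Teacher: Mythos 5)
Your route — classifying the lattice Minkowski decompositions of $\Newt(f)$ — is genuinely different from the paper's, which instead observes that $V(f)$ is a union of four (degenerate) lines counted with multiplicity and reads the factorizations off the curve. The two viewpoints are dual (cf.\ Lemma \ref{bunkai}), and yours is arguably the more systematic: your edge-length system does yield exactly the two decompositions $P=\sigma_1+2\sigma_2+\sigma_3$ and $P=A+B$, which match the paper's $(xy\oplus 0)(x\oplus y)(y\oplus 0)(y\oplus 0)$ and $(xy^2\oplus xy\oplus x\oplus y)(xy\oplus y^2\oplus y\oplus 0)$. One caveat on the step you flag as the main obstacle: your blanket claim that the Minkowski-indecomposable lattice polygons are exactly points, primitive segments and triangles is false (e.g.\ $\Conv\{(0,0),(1,0),(1,1),(0,3)\}$ is an indecomposable lattice quadrilateral), so you cannot invoke it; but for this $P$ the finite check you describe does close the gap, since every lattice summand of $P$ is recorded by an integer vector $(m_1,\dots,m_6)$ with $0\le m_i\le \ell_i$ satisfying the two closing relations, and enumerating these shows the only indecomposable summands are $\sigma_1,\sigma_2,\sigma_3,A,B$.

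The genuine error is in the complexity bookkeeping at the end. First, $\fComp(f_1,\dots,f_n)=\sum_i\mComp(f_i)-(n-1)$, so the four-binomial factorization has complexity $8-3=5$, not $8$. Second, $\mComp$ is the number of \emph{linear regions}, not the size of the support: for $g=xy^2\oplus xy\oplus x\oplus y$ the term $xy$ is never uniquely maximal (it would require $y\le 0$ and $y\ge 0$ simultaneously), so $\mComp(g)=3$ even though the midpoint of the long edge is forced into the support; hence the two-triangle factorization has complexity $3+3-1=5$, not $8$. This is not merely cosmetic: with your figures ($8$ and $8$) both irreducible factorizations would be \emph{worse} than the trivial factorization, whose complexity is $\fComp(f)=\mComp(f)=6$ (the six vertices of the hexagon), and the claim that they realize the \emph{minimal} factorization complexity would collapse. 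With the correct values the conclusion is restored: both irreducible factorizations have complexity $5<6$, and since any factorization into $k$ non-unit factors has complexity at least $k+1$ and at least one factor with $\mComp\ge 3$ whenever $k\le 3$, the value $5$ is indeed the minimum.
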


The rest of this paper is organized as follows.
In Section 2, we present basic definitions and facts about tropical varieties and give a counterexample to the above question.
In Section 3, we show the duality theorem for tropical rational functions, define the volume of a pair of tropical polynomials and prove the main theorems.

\section*{Acknowledgements}
I am grateful to Nobuyoshi Takahashi for helpful comments.

\section{Tropical varieties}

In this section, we recall the basics about tropical varieties.
For details, see \cite{Jos} and \cite{MS}.
First, we look at the definition of the tropical algebra.
\begin{definition}[Tropical algebra]
The \textit{tropical algebra} is the triple $(\mathbb{T}, \oplus, \odot)$, where $\bT:=\bR\cup \{-\infty\}$ and the tropical sum $\oplus$ and the tropical multiplication $\odot$ are defined as follows:
\begin{eqnarray*}
x\oplus y&:=&\max\{x, y\},\\
x\odot y&:=&x+y\ (\text{the usual sum}).
\end{eqnarray*}
Tropical algebra satisfies all field axioms except for the existence of the inverse for addition, and is called a semi-field.
For $x\in \bT$ and $y\in \bT\setminus \{-\infty \}=\bR$, we define $x\oslash y:=x-y$.
\end{definition}

Next, we define tropical polynomials.
\begin{definition}[Tropical polynomials]
A \textit{tropical Laurent polynomial} $f$ is an expression of the form
\[
f=\bigoplus_{\mathbf{i}\in \mathbb{Z}^n} c_{\mathbf{i}} \mathbf{x}^{\mathbf{i}},
\]
where $c_{\mathbf{i}}\in \mathbb{T}$ and $\mathbf{x}^{\mathbf{i}}$ denotes $x_1^{i_1}\cdots x_n^{i_n}$ for $\mathbf{i}=(i_1, \dots, i_n)$, and only finitely many of the coefficients $c_{\mathbf{i}}$ are not $-\infty$.
We may drop terms with coefficients $-\infty$.
We will refer to a tropical Laurent polynomial simply as a tropical polynomial.
A tropical polynomial defines a map from $\mathbb{R}^n$ to $\mathbb{R}\cup \{-\infty\}$ in the following way:
\[
f(t_1, \dots, t_n)=\max_{\mathbf{i}}(c_{\mathbf{i}}+i_1t_1+\dots+i_nt_n).
\]
We write $\mathbb{T}[x_1^{\pm1}, \dots, x_n^{\pm1}]$ for the set of all $n$-variate tropical polynomials, and define the addition and the multiplication in a natural way.
\end{definition}

\begin{definition}[Tropical rational functions]\label{trf}
For $f\in \bT[x_1^{\pm1}, \dots, x_n^{\pm1}]$ and $g\in \bT[x_1^{\pm1}, \dots, x_n^{\pm1}]\setminus \{-\infty \}$, we define the map $f\oslash g$ from $\mathbb{R}^n$ to $\mathbb{R}\cup \{-\infty\}$ in the following way:
\[
(f\oslash g)(t_1, \dots, t_n)=f(t_1, \dots, t_n)-g(t_1, \dots, t_n).
\]
A function $\varphi$ from $\mathbb{R}^n$ to $\mathbb{R}\cup \{-\infty\}$ is called a \textit{tropical rational function} if there exist $f\in \bT[x_1^{\pm1}, \dots, x_n^{\pm1}]$ and $g\in \bT[x_1^{\pm1}, \dots, x_n^{\pm1}]\setminus \{-\infty \}$ such that $\varphi=f\oslash g$.
\end{definition}

A map $f: \bR^n \to \bR$ is a \textit{real valued integer affine function} if there exist an integer vector $\mathbf{v} \in \bZ^n$ and a real number $a\in \bR$ such that $f(\mathbf{x})=\mathbf{v} \cdot \mathbf{x}+a$.
A map $f: \bR^n \to \bR$ is a \textit{real valued piecewise integer affine function with finitely many pieces} if there exist finitely many closed subsets $D_1, \dots, D_s\subset \bR^n$ such that $\bigcup_{i=1}^{s}D_i=\bR^n$ and each restriction $f|_{D_i}$ ($1\leq i \leq s$) is equal to the restriction of an integer affine function to $D_i$.
A \textit{piecewise integer affine function} is a function from $\bR^n$ to $\bR\cup \{-\infty\}$ that always takes $-\infty$ on $\bR^n$ or a continuous real valued piecewise integer affine function from $\bR^n$ to $\bR$ with finitely many pieces.
For tropical polynomials $f\in \bT[x_1^{\pm1}, \dots, x_n^{\pm1}]$ and $g\in \bT[x_1^{\pm1}, \dots, x_n^{\pm1}]\setminus \{-\infty \}$, the map $f\oslash g$ is a piecewise integer affine function.
Conversely, any piecewise integer affine function is expressed as $f\oslash g$ (see \cite[Corollary 4.4.12]{MR}).

\begin{definition}[Tropical hypersurfaces]
Let $f=\bigoplus_{\mathbf{i}} c_{\mathbf{i}} x_1^{i_1}\dots x_n^{i_n}\neq -\infty$ be a tropical polynomial.
The tropical polynomial $f$ defines the \textit{tropical hypersurface} $V(f)$ in the following way:
\begin{eqnarray*} 
V(f):=\left\{ (t_1,\dots, t_n)\in \mathbb{R}^n \middle| 
\begin{array}{l}
\text{$\exists \mathbf{i}=(i_1, \dots, i_n),\ \mathbf{j}=(j_1, \dots, j_n) \in \mathbb{Z}^n$ ($\mathbf{i} \neq \mathbf{j}$) s.t.}\\
\text{$c_{\mathbf{i}}+i_1t_1+\dots+i_nt_n=c_{\mathbf{j}}+j_1t_1+\dots+j_nt_n$}\\
\hspace{32.5mm} =f(t_1,\dots, t_n)
\end{array}
\right\}.
\end{eqnarray*}
When $f=-\infty$, i.e. all the coefficients of $f$ are $-\infty$, we define $V(-\infty)=\mathbb{R}^n$.
When $n=2$ and $V(f)\neq \emptyset, \bR^2$, the tropical hypersurface $V(f)$ is called a \textit{tropical plane curve}.
\end{definition}

\begin{definition}[Dual subdivisions, {\cite[Definition 3.10]{Mik}}]\label{dualsubdiv}
Let $f=\bigoplus_{\mathbf{i}} c_{\mathbf{i}} x_1^{i_1}\dots x_n^{i_n}$ be a tropical polynomial.
We write $\Newt(f)\subset \mathbb{R}^n$ for the convex hull of the set $\{\mathbf{i} \in \mathbb{Z}^n\ |\ c_{\mathbf{i}}\neq -\infty\}$.
Let $\Delta^{\uparrow}_f\subset \mathbb{R}^{n+1}$ be the convex hull of the set
\[
\{ (\mathbf{i}, \alpha)\in \mathbb{Z}^n\times \mathbb{R}\ \mid \alpha \leq c_{ij}\}.
\]
Then, the projections of the bounded faces of $\Delta^{\uparrow}_f$ to $\bR^n$ form a lattice subdivision of $\Newt(f)$.
We call this the \textit{dual subdivision} of $f$ and denote it by $\Delta_f$.
\end{definition}

\begin{theorem}[The Duality Theorem, {\cite[Proposition 3.11]{Mik}}]\label{dualitytheorem}
Let $f$ be an $n$-variate tropical polynomial satisfying $V(f)\neq \emptyset, \bR^n$.
Then, $V(f)$ is the support of a finite $(n-1)$-dimensional polyhedral complex $\Sigma_f$ (with possibly noncompact cells) in $\mathbb{R}^n$.
It is dual to the subdivision $\Delta_f$ in the following sense:
\begin{itemize}
  \item (Closures of) domains of $\bR^n\setminus V(f)$ correspond to lattice points in $\Delta_f$.
  \item For $0\leq d\leq n-1$, $d$-dimensional cells in $\Sigma_f$ correspond to $(n-d)$-simplexes in $\Delta_f$.
  \item These correspondences are inclusion-reversing.
  \item If $n=2$, then $1$-dimensional cell in $\Sigma_f$ is orthogonal to the corresponding $1$-simplex in $\Delta_f$ (see Figure \ref{fdual}).
\end{itemize}
For a cell $\sigma \in \Delta_f$, the corresponding cell in $\Sigma_f$ is given by $\{P\in \mathbb{R}^n\ |\ f(P)=c_{\bi}+\bi\cdot P \text{ for any vertex $\bi$ of $\sigma$}\}$.
\end{theorem}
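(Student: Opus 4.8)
The plan is to organize everything around the \emph{active index set} and then read off each bulleted assertion from a single convex-duality dictionary. For a point $P\in\bR^n$ I would define $I(P):=\{\bi\in\bZ^n : c_{\bi}\neq-\infty,\ c_{\bi}+\bi\cdot P=f(P)\}$, the set of monomials attaining the maximum at $P$. By the definition of $V(f)$ one has $P\in V(f)$ exactly when $|I(P)|\geq 2$, while the (closures of) connected components of $\bR^n\setminus V(f)$ are the loci where a single monomial strictly dominates, i.e. $|I(P)|=1$. Thus the whole theorem reduces to understanding how $I(P)$ varies with $P$ and matching its possible values with the cells of $\Delta_f$.

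First I would establish the core correspondence using the polyhedron $\Delta^{\uparrow}_f=\Conv\{(\bi,\alpha):\alpha\leq c_{\bi}\}$. Writing $f(P)=\max_{\bi}\,(P,1)\cdot(\bi,c_{\bi})$, the maximum of the linear functional $(P,1)$ over $\Delta^{\uparrow}_f$ is attained precisely on the upper face whose vertices are $\{(\bi,c_{\bi}):\bi\in I(P)\}$; projecting to $\bR^n$ this face is, by Definition \ref{dualsubdiv}, a cell $\sigma$ of $\Delta_f$ with vertex set $I(P)$. Conversely, for a cell $\sigma\in\Delta_f$ with vertices $\bi_0,\dots,\bi_k$ (so $k=\dim\sigma$) I would set $C(\sigma):=\{P: I(P)=\{\bi_0,\dots,\bi_k\}\}$; membership is governed by the equations $c_{\bi_0}+\bi_0\cdot P=\cdots=c_{\bi_k}+\bi_k\cdot P$ together with the strict inequalities $c_{\bi_0}+\bi_0\cdot P>c_{\bj}+\bj\cdot P$ for every vertex $\bj\notin\sigma$. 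Since the vertices of $\sigma$ affinely span a $k$-dimensional space, these equalities are independent and cut out an affine subspace of dimension $n-k$, and the strict inequalities make $C(\sigma)$ a relatively open polyhedral cell of dimension $n-\dim\sigma$. Taking $\Sigma_f$ to be the closures $\overline{C(\sigma)}$ for $\dim\sigma\geq 1$ then yields a finite $(n-1)$-dimensional complex whose support is $V(f)$, and the assignment $\sigma\mapsto\overline{C(\sigma)}$ gives both the dimension bijection $d=n-\dim\sigma$ and the explicit formula for the dual cell stated at the end.

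It remains to read off the three remaining properties from this dictionary. For inclusion-reversal I would note that $I(P)\supseteq\{\text{vertices of }\sigma\}$ holds iff $\sigma$ is a face of the cell dual to $P$, so $\overline{C(\sigma)}=\bigcup_{\tau\succeq\sigma}C(\tau)$; hence $\sigma\preceq\tau$ in $\Delta_f$ corresponds to $\overline{C(\tau)}\subseteq\overline{C(\sigma)}$ in $\Sigma_f$, which reverses inclusions (and interchanges the dimension inequality $\dim\sigma\leq\dim\tau$ with $\dim C(\tau)\leq\dim C(\sigma)$). For $n=2$ and an edge $\sigma=[\bi,\bj]$ of $\Delta_f$, the cell $C(\sigma)$ lies in the line $(\bi-\bj)\cdot P=c_{\bj}-c_{\bi}$, whose normal direction $\bi-\bj$ is exactly the direction of $\sigma$, giving orthogonality. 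The lattice-point count matching $\bR^n\setminus V(f)$ with vertices of $\Delta_f$ is just the $\dim\sigma=0$ instance of the same correspondence.

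The hard part will be the first claim of the second paragraph: verifying rigorously that the active set $I(P)$ is \emph{always} the vertex set of a genuine cell of $\Delta_f$, and that the cells $C(\sigma)$ are pairwise disjoint and cover all of $\bR^n$. This is precisely the Legendre-transform duality between the piecewise-linear convex function $f$ and the regular subdivision of $\Newt(f)$ induced by the weights $c_{\bi}$. The delicate points are that the face of $\Delta^{\uparrow}_f$ selected by $(P,1)$ is always \emph{bounded} -- which holds because the last coordinate of $(P,1)$ is positive while $\Delta^{\uparrow}_f$ is unbounded only downward, so the optimum sits on an upper (hence bounded) face that genuinely projects into $\Delta_f$ -- and that the independence count $n-k$ is valid, which uses that $\sigma$ is an actual face of the subdivision rather than an arbitrary collection of simultaneously active monomials.
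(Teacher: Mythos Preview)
The paper does not prove this theorem at all: it is quoted as background material with a citation to \cite[Proposition~3.11]{Mik}, and no argument is given. So there is nothing in the paper to compare your proposal against.

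Your sketch is the standard Legendre-duality proof and is essentially correct. One small point worth tightening: when you write ``with vertices $\bi_0,\dots,\bi_k$ (so $k=\dim\sigma$)'' you are implicitly treating each cell of $\Delta_f$ as a simplex. In general the regular subdivision induced by the heights $c_{\bi}$ need not be a triangulation, so a $k$-cell $\sigma$ may have more than $k+1$ vertices. This does not damage your argument---the equalities $c_{\bi}+\bi\cdot P=c_{\bj}+\bj\cdot P$ over all vertices $\bi,\bj$ of $\sigma$ still cut out an affine subspace of dimension $n-\dim\sigma$, because the vertex set affinely spans $\sigma$---but the parenthetical should be rephrased. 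The paper's own statement also says ``$(n-d)$-simplexes'', which is the same imprecision; the correct word is ``cells''.
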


\begin{figure}[H]
\centering
\begin{tikzpicture}
\coordinate (L1) at (2,0);
\coordinate (L2) at (2.5,0);
\coordinate (L3) at (3,0);
\coordinate (L4) at (2,0.5);
\coordinate (L5) at (2.5,0.5);
\coordinate (L6) at (2,1);
\coordinate (L7) at (-1,1);
\coordinate (L8) at (-0.5,1);
\coordinate (L9) at (0,0.5);
\coordinate (L10) at (0.5,0.5);
\coordinate (L11) at (1,1);
\coordinate (L12) at (0,0);
\coordinate (L13) at (0.5,0);
\coordinate (L14) at (-1,1.5);
\coordinate (L15) at (-0.5,1.5);
\coordinate (L16) at (0,2);
\coordinate (L17) at (-0.5,0);
\coordinate (L18) at (-1,0.5);
\draw (L1)--(L3)--(L6)--cycle;
\draw (L4)--(L5)--(L2);
\draw (L11)--(L10)--(L18);
\draw (L16)--(L15)--(L17);
\draw (L14)--(L15);
\draw (L10)--(L13);
\coordinate (A1) at (7,0);
\coordinate (A2) at (7.5,0);
\coordinate (A3) at (8,0);
\coordinate (A4) at (7,0.5);
\coordinate (A5) at (7.5,0.5);
\coordinate (A6) at (7,1);
\coordinate (A7) at (4,0.5);
\coordinate (A8) at (4.5,0.5);
\coordinate (A9) at (5,1);
\coordinate (A10) at (5.5,1);
\coordinate (A11) at (6,1.5);
\coordinate (A12) at (4.5,0);
\coordinate (A13) at (5.5,0);
\coordinate (A14) at (4,1.5);
\coordinate (A15) at (5,1.5);
\coordinate (A16) at (5.5,2);
\draw (A1)--(A3);
\draw (A3)--(A6);
\draw (A1)--(A6);
\draw (A4)--(A5)--(A2)--cycle;
\draw (A7)--(A8)--(A9)--(A10)--(A11);
\draw (A14)--(A15)--(A16);
\draw (A9)--(A15);
\draw (A8)--(A12);
\draw (A10)--(A13);
\coordinate [label=below:\text{$\Delta_f$}] (a) at (2.5,-0.25);
\coordinate [label=below:\text{$\Delta_g$}] (b) at (7.5,-0.25);
\coordinate [label=below:\text{$V(f)$}] (a) at (0,-0.25);
\coordinate [label=below:\text{$V(g)$}] (b) at (5,-0.25);
\coordinate [label=below:\text{$\leftrightarrow$}] (a) at (1.5,0.7);
\coordinate [label=below:\text{$\leftrightarrow$}] (b) at (6.5,0.7);
\end{tikzpicture}
\caption{Tropical plane curves and their dual subdivisions.}
\label{fdual}
\end{figure}
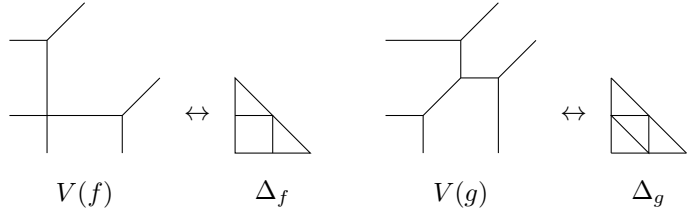

\begin{notation}
For a given tropical plane curve $\Gamma$, we call a $0$-dimensional cell of $\Gamma$ a vertex of $\Gamma$ and a $1$-dimensional cell of $\Gamma$ an edge of $\Gamma$.
\end{notation}

For a tropical polynomial $f\in \bT[x^{\pm 1}, y^{\pm 1}]$, the dual subdivision $\Delta_f$ also defines the weights of the edges of the tropical plane curve $V(f)$.

\begin{definition}\label{weight}
Let $\Gamma=V(f)$ be a tropical plane curve and $\sigma$ an edge of $\Gamma$.
We define the \textit{weight} $w_{\sigma}$ of $\sigma$ in $\Gamma$ as the lattice length of the corresponding $1$-simplex of $\Delta_f$.
\end{definition}

From now on, when we say a tropical curve, we will refer to the polyhedral set $\Gamma$ together with weights on its edges.

\begin{lemma}\label{Jos4.6}
\cite[Lemma 4.6]{Jos}
For tropical polynomials $f, g\in \bT[x_1^{\pm1}, \dots, x_n^{\pm1}]$, we have
\[
V(f\odot g)=V(f)\cup V(g).
\]
\end{lemma}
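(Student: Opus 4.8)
The plan is to reduce the set-theoretic identity to a pointwise statement about where the defining maxima are attained more than once, using the fact that tropical multiplication of polynomials corresponds to ordinary addition of the associated functions. Writing $f=\bigoplus_{\mathbf{i}}c_{\mathbf{i}}\mathbf{x}^{\mathbf{i}}$ and $g=\bigoplus_{\mathbf{j}}d_{\mathbf{j}}\mathbf{x}^{\mathbf{j}}$, the product is $f\odot g=\bigoplus_{\mathbf{k}}e_{\mathbf{k}}\mathbf{x}^{\mathbf{k}}$ with $e_{\mathbf{k}}=\max_{\mathbf{i}+\mathbf{j}=\mathbf{k}}(c_{\mathbf{i}}+d_{\mathbf{j}})$, and a short computation gives the function identity $(f\odot g)(P)=f(P)+g(P)$ for every $P\in\bR^n$. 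First I would record this identity, together with the trivial observation that $P\in V(h)$ precisely when the maximum defining $h(P)$ is attained by at least two distinct exponent vectors.

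For the inclusion $V(f)\cup V(g)\subseteq V(f\odot g)$, suppose $P\in V(f)$, so two distinct exponents $\mathbf{i}\neq\mathbf{i}'$ attain $f(P)$; choose any $\mathbf{j}$ attaining $g(P)$. Then $\mathbf{i}+\mathbf{j}$ and $\mathbf{i}'+\mathbf{j}$ are distinct exponents of $f\odot g$, and since $e_{\mathbf{k}}+\mathbf{k}\cdot P\le(f\odot g)(P)$ always holds while $e_{\mathbf{i}+\mathbf{j}}+(\mathbf{i}+\mathbf{j})\cdot P\ge c_{\mathbf{i}}+d_{\mathbf{j}}+(\mathbf{i}+\mathbf{j})\cdot P=f(P)+g(P)=(f\odot g)(P)$, both $\mathbf{i}+\mathbf{j}$ and $\mathbf{i}'+\mathbf{j}$ attain the maximum; hence $P\in V(f\odot g)$. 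The case $P\in V(g)$ is symmetric.

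For the reverse inclusion, suppose $P\in V(f\odot g)$, witnessed by distinct exponents $\mathbf{k}\neq\mathbf{k}'$ attaining $(f\odot g)(P)$. Writing $\mathbf{k}=\mathbf{i}+\mathbf{j}$ with $c_{\mathbf{i}}+d_{\mathbf{j}}=e_{\mathbf{k}}$, the equality $(c_{\mathbf{i}}+\mathbf{i}\cdot P)+(d_{\mathbf{j}}+\mathbf{j}\cdot P)=f(P)+g(P)$ together with the two bounds $c_{\mathbf{i}}+\mathbf{i}\cdot P\le f(P)$ and $d_{\mathbf{j}}+\mathbf{j}\cdot P\le g(P)$ forces both to be equalities; thus $\mathbf{i}$ attains $f(P)$ and $\mathbf{j}$ attains $g(P)$. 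Doing the same for $\mathbf{k}'=\mathbf{i}'+\mathbf{j}'$ and using $\mathbf{k}\neq\mathbf{k}'$, we cannot have simultaneously $\mathbf{i}=\mathbf{i}'$ and $\mathbf{j}=\mathbf{j}'$; so either $\mathbf{i}\neq\mathbf{i}'$ (both attaining $f(P)$) or $\mathbf{j}\neq\mathbf{j}'$ (both attaining $g(P)$), giving $P\in V(f)\cup V(g)$.

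I expect the only real subtlety to be the ``forcing'' step in the reverse inclusion --- extracting individual maximizers of $f$ and $g$ from a maximizer of the product --- which rests on the elementary fact that a sum of two quantities, each bounded above by a fixed value, equals the sum of those values only when each is individually maximal. I would also dispose of the degenerate cases separately: if $f=-\infty$ or $g=-\infty$ then $f\odot g=-\infty$, and the convention $V(-\infty)=\bR^n$ makes both sides equal to $\bR^n$. As an alternative phrasing, the whole statement can be cast in convex-analytic terms, since $V(h)$ is exactly the locus where the convex piecewise-linear function $h$ fails to be differentiable; the subdifferential of $f\odot g=f+g$ is the Minkowski sum $\partial f(P)+\partial g(P)$, which is a single point if and only if both summands are, and this yields the identity at once.
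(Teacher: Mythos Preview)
Your argument is correct. The paper does not supply its own proof of this lemma; it merely cites \cite[Lemma~4.6]{Jos}, so there is nothing in the paper to compare your write-up against. Your proof is the standard one: the function identity $(f\odot g)(P)=f(P)+g(P)$, the forward inclusion by pairing two maximizers of $f$ (or $g$) with one of the other factor, and the reverse inclusion via the ``forcing'' step that decomposes a maximizing exponent of the product into maximizing exponents of the factors. The handling of the degenerate case $f=-\infty$ or $g=-\infty$ via the convention $V(-\infty)=\bR^n$ is also fine. The alternative convex-analytic remark about $\partial(f+g)=\partial f+\partial g$ is a nice reformulation but not needed.
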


\begin{remark}
For tropical plane curves $V(f)$ and $V(g)$, the sum of the right-hand side of the equation $V(f\odot g)=V(f)\cup V(g)$ is the sum including weights.
\end{remark}

\begin{definition}
A tropical plane curve $\Gamma$ is \textit{irreducible} if whenever $\Gamma$ is written in the form $\Gamma=V(f)\cup V(g)$, where $f$ and $g$ are tropical polynomials, then either $V(f)$ or $V(g)$ is the empty set.
\end{definition}

\begin{lemma}
If a tropical curve $V(f)$ is irreducible, then $f$ is also irreducible.
\end{lemma}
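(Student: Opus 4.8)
The plan is to prove the contrapositive: if $f$ is reducible in $\bT[x^{\pm 1},y^{\pm 1}]$, then the tropical curve $V(f)$ is reducible. Recall that the units of $\bT[x^{\pm 1},y^{\pm 1}]$ are exactly the monomials $c\odot x^{a}\odot y^{b}$ with $c\in\bR$, and that such a monomial has empty tropical hypersurface. Since $V(f)$ is assumed to be a tropical curve we have $V(f)\neq\emptyset$ and $V(f)\neq\bR^{2}$ (in particular $f\neq-\infty$); hence ``$f$ reducible'' means $f=g\odot h$ for some $g,h\in\bT[x^{\pm 1},y^{\pm 1}]$ that are neither monomials nor $-\infty$, so each of $g,h$ involves at least two distinct monomials with finite coefficients. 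By Lemma~\ref{Jos4.6} we get $V(f)=V(g)\cup V(h)$, so it remains only to establish the local fact: \emph{if $p\in\bT[x^{\pm 1},y^{\pm 1}]$ involves at least two distinct monomials with finite coefficients, then $V(p)\neq\emptyset$}. Granting this, $V(g)\neq\emptyset$ and $V(h)\neq\emptyset$, and $V(f)=V(g)\cup V(h)$ exhibits $V(f)$ as reducible.

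For the local fact I would restrict $p$ to a line. Pick two distinct exponents $\mathbf{i}\neq\mathbf{j}$ with finite coefficients, set $\mathbf{u}=\mathbf{j}-\mathbf{i}\neq\mathbf{0}$, and for an arbitrary base point $\mathbf{p}_{0}$ consider $\psi(t):=p(\mathbf{p}_{0}+t\mathbf{u})$. This is a maximum of finitely many affine functions of $t$, hence convex and piecewise linear; the slopes of two of these affine functions are $\mathbf{i}\cdot\mathbf{u}$ and $\mathbf{j}\cdot\mathbf{u}$, and they differ since $\mathbf{j}\cdot\mathbf{u}-\mathbf{i}\cdot\mathbf{u}=|\mathbf{u}|^{2}>0$. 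Because the slope of $\psi$ near $-\infty$ (resp.\ near $+\infty$) is the least (resp.\ the greatest) slope occurring, these two asymptotic slopes are distinct, so $\psi$ is not affine and therefore has a breakpoint at some $t_{0}\in\bR$. At $t_{0}$ the two affine pieces of $\psi$ active just to the left and just to the right both attain the value $\psi(t_{0})=p(\mathbf{p}_{0}+t_{0}\mathbf{u})$, and they arise from two distinct monomials of $p$ (their slopes in $t$ differ); by the definition of $V(p)$ this means $\mathbf{p}_{0}+t_{0}\mathbf{u}\in V(p)$.

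I expect this local fact to carry essentially all the content. The point requiring care is the presence of ``inessential'' monomials, whose affine piece is dominated everywhere and never appears on the graph of $\psi$; the asymptotic-slope argument sidesteps this, since it uses only that the chosen exponents $\mathbf{i},\mathbf{j}$ occur in $p$, not that their pieces are visible. One should also fix the convention that ``$f$ reducible'' refers to a factorization $f=g\odot h$ of tropical polynomials, so that Lemma~\ref{Jos4.6} applies verbatim; if only a factorization as functions on $\bR^{2}$ is available, one must first pass to reduced representatives.
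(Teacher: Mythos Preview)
Your proof is correct and follows the same route as the paper: assume a factorization $f=g\odot h$, invoke Lemma~\ref{Jos4.6} to get $V(f)=V(g)\cup V(h)$, and use the equivalence ``$V(p)=\emptyset$ iff $p$ is a monomial'' to conclude. The paper simply asserts this equivalence in the clause ``i.e., either $g$ or $h$ is a unit,'' whereas you supply a self-contained argument for it via the one-parameter restriction $\psi(t)=p(\mathbf{p}_0+t\mathbf{u})$; that extra detail is sound but not a different strategy.
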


\begin{proof}
Let $f=g\odot h$.
Then, by Lemma \ref{Jos4.6}, we have $V(f)=V(g)\cup V(h)$.
Since $V(f)$ is irreducible, either $V(g)$ or $V(h)$ is the empty set, i.e., either $g$ or $h$ is a unit.
\end{proof}

\begin{notation}
For $A, B\subset \bR^n$, we denote the Minkowski sum of $A$ and $B$ by $A+B$.
\end{notation}

\begin{definition}
Let $f$ be a tropical polynomial.
The Newton polytope of $f$ is \textit{irreducible} if whenever it is not a point and is written in the form $\Newt(f)=\Newt(g)+\Newt(h)$, where $g$ and $h$ are tropical polynomials, then either $\Newt(g)$ or $\Newt(h)$ is a point.
\end{definition}

\begin{lemma}
Let $f$ be a tropical polynomial.
If the Newton polytope of $f$ is irreducible, then $f$ is also irreducible.
\end{lemma}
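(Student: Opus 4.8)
The plan is to prove the contrapositive: if $f$ is reducible, then $\Newt(f)$ is reducible (in particular, not a point, so that the relevant clause of the definition applies). The one fact I need beyond the definitions is that Newton polytopes are multiplicative, i.e.\ $\Newt(g\odot h)=\Newt(g)+\Newt(h)$ for all tropical polynomials $g,h$. To see this, write $g=\bigoplus_{\mathbf{i}}c_{\mathbf{i}}\mathbf{x}^{\mathbf{i}}$ and $h=\bigoplus_{\mathbf{j}}d_{\mathbf{j}}\mathbf{x}^{\mathbf{j}}$; then the coefficient of $\mathbf{x}^{\mathbf{k}}$ in $g\odot h$ is $\max_{\mathbf{i}+\mathbf{j}=\mathbf{k}}(c_{\mathbf{i}}+d_{\mathbf{j}})$, which differs from $-\infty$ exactly when $\mathbf{k}=\mathbf{i}+\mathbf{j}$ for some $\mathbf{i},\mathbf{j}$ with $c_{\mathbf{i}}\neq-\infty$ and $d_{\mathbf{j}}\neq-\infty$. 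Hence the support of $g\odot h$ is the Minkowski sum of the supports of $g$ and $h$, and taking convex hulls (using $\Conv(A+B)=\Conv(A)+\Conv(B)$) gives the claim.

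Next I would record the elementary observation that, for a tropical polynomial $g\neq -\infty$, the polytope $\Newt(g)$ is a single point if and only if $g$ has exactly one term, i.e.\ $g$ is a unit $c\mathbf{x}^{\mathbf{i}}$ with $c\in\bR$. With these two facts the argument is immediate: suppose $f=g\odot h$ with neither $g$ nor $h$ a unit. Then $\Newt(g)$ and $\Newt(h)$ each contain at least two lattice points, so neither is a point; by multiplicativity $\Newt(f)=\Newt(g)+\Newt(h)$, and a Minkowski sum of two sets each containing more than one point again contains more than one point, so $\Newt(f)$ is not a point either. Thus the equality $\Newt(f)=\Newt(g)+\Newt(h)$ witnesses that $\Newt(f)$ is reducible. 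Contrapositively, if $\Newt(f)$ is irreducible then $f$ admits no such factorization, i.e.\ $f$ is irreducible.

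The argument is routine; the only thing requiring care is the ``not a point'' clause in the definition of an irreducible Newton polytope, which is why I isolate the observation that one-term tropical polynomials are exactly those with point-sized Newton polytopes (this also keeps the statement consistent with units being counted as irreducible). I do not foresee a genuine obstacle.
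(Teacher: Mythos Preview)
Your argument is correct and is essentially the paper's proof, just phrased contrapositively and with the two background facts (multiplicativity of Newton polytopes and the unit $\leftrightarrow$ point correspondence) spelled out; the paper simply writes ``Let $f=g\odot h$. Then $\Newt(f)=\Newt(g)+\Newt(h)$, so by irreducibility one of $\Newt(g),\Newt(h)$ is a point, i.e.\ $g$ or $h$ is a unit.''
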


\begin{proof}
Let $f=g\odot h$.
Then, we have $\Newt(f)=\Newt(g)+\Newt(h)$.
Since $\Newt(f)$ is irreducible, either $\Newt(g)$ or $\Newt(h)$ is a point, i.e., either $g$ or $h$ is a unit.
\end{proof}

\begin{notation}\label{v_P,L}
Let $\Gamma$ be a tropical plane curve, $P$ a vertex of $\Gamma$ and $L$ an edge of $\Gamma$ containing $P$.
Let $R$ be the ray which contains $L$ and has $P$ as its endpoint.
We denote by $\mathbf{v}_{P, L}$ the primitive vector that have the same direction as $R$.
\end{notation}

Tropical plane curves satisfy a balancing condition.

\begin{theorem}\label{MS3.3.2}
\cite[Theorem 3.3.2]{MS}
Let $\Gamma$ be a tropical plane curve and $P$ a vertex of $\Gamma$ and $L_1, \dots, L_n$ the edges of $\Gamma$ containing $P$ with weights $w_{L_i}$.
Then, we have
\[
\sum_{i=1}^nw_{L_i}\mathbf{v}_{P, L_i}=\mathbf{0}.
\]
\end{theorem}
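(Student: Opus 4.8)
The plan is to deduce Theorem~\ref{MS3.3.2} from the Duality Theorem (Theorem~\ref{dualitytheorem}) by transporting the statement from the tropical curve $\Gamma=V(f)$ to the dual subdivision $\Delta_f$, where balancing becomes the trivial fact that the boundary of a convex polygon is a closed loop. Fix the vertex $P$ of $\Gamma=V(f)$. Since $P$ is a $0$-dimensional cell of $\Sigma_f$, it is dual to a $2$-dimensional cell $\sigma$ of $\Delta_f$, namely the convex hull of $\{\bi\in\Newt(f)\cap\bZ^2 \mid c_{\bi}+\bi\cdot P=f(P)\}$; this is genuinely $2$-dimensional precisely because $P$ is a vertex of $\Gamma$. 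By the inclusion-reversing part of Theorem~\ref{dualitytheorem}, the edges $L_1,\dots,L_n$ of $\Gamma$ through $P$ are dual to the edges $\tau_1,\dots,\tau_n$ of the lattice polygon $\sigma$; by the orthogonality part, $L_i\perp\tau_i$; and by Definition~\ref{weight}, the weight $w_{L_i}$ is the lattice length of $\tau_i$.

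Next I would make the local picture at $P$ explicit. On a neighbourhood of $P$, $f$ is the maximum of the affine functions $c_{\bi}+\bi\cdot x$ with $\bi$ a vertex of $\sigma$, and the domains of $\bR^2\setminus V(f)$ meeting this neighbourhood are exactly the regions on which one such function dominates, i.e.\ (portions of) the maximal cones of the normal fan of $\sigma$; the rays $L_1,\dots,L_n$ are its codimension-$1$ cones. Concretely, if $\tau_i$ has endpoints $\bi'$, $\bi''$ and $\mathbf{u}_i=\bi''-\bi'$, then on $L_i$ one has $c_{\bi'}+\bi'\cdot x=c_{\bi''}+\bi''\cdot x=f(x)$, so $\mathbf{u}_i\cdot x$ is constant along $L_i$, while the region where $\bi'$ dominates (resp.\ where $\bi''$ dominates) lies on the side into which $-\mathbf{u}_i$ (resp.\ $\mathbf{u}_i$) points. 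Hence $L_i$, viewed as a ray emanating from $P$, is the outer normal ray of the edge $\tau_i$ of $\sigma$, and the primitive vector $\mathbf{v}_{P,L_i}$ of Notation~\ref{v_P,L} is the primitive outer normal vector of $\tau_i$.

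With this identification the conclusion is immediate. Orient $\partial\sigma$ counterclockwise, giving each edge $\tau_i$ a distinguished edge vector $\mathbf{u}_i$, whose lattice length is $w_{L_i}$ and whose primitive direction is the direction of travel along $\tau_i$; since $\partial\sigma$ is a closed polygonal loop, $\sum_{i=1}^n\mathbf{u}_i=\mathbf{0}$. Let $\rho$ be the clockwise rotation by $90^\circ$, a linear map. Traversing a convex polygon counterclockwise places the outer normal $90^\circ$ clockwise of the direction of travel, so $w_{L_i}\mathbf{v}_{P,L_i}=\rho(\mathbf{u}_i)$ for every $i$, with the same $\rho$ for all $i$. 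Applying $\rho$ to the loop identity yields $\sum_{i=1}^n w_{L_i}\mathbf{v}_{P,L_i}=\rho\big(\sum_{i=1}^n\mathbf{u}_i\big)=\mathbf{0}$, as claimed.

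The step I expect to take the real work is the second paragraph: extracting from Theorem~\ref{dualitytheorem} and Definition~\ref{weight}, consistently around all of $\partial\sigma$, that each $L_i$ is the \emph{outer} (not inner) normal ray of $\tau_i$ and that the weight is exactly the lattice length of the edge vector $\mathbf{u}_i$ — in other words, that the sign in $w_{L_i}\mathbf{v}_{P,L_i}=\rho(\mathbf{u}_i)$ is uniform in $i$. Everything else (the closed-loop identity, linearity of $\rho$) is formal. An alternative that avoids invoking the Duality Theorem is to argue directly from convexity of $f$: near $P$, $f$ is a maximum of affine functions indexed by the vertices of $\sigma$, its corner locus is the $1$-skeleton of the normal fan of $\sigma$, and balancing of a complete normal fan of a lattice polygon is precisely the rotated closed-loop identity; but since Theorem~\ref{dualitytheorem} is already available, routing through it is cleanest.
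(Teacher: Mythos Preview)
The paper does not give its own proof of this statement: Theorem~\ref{MS3.3.2} is simply quoted from \cite[Theorem 3.3.2]{MS} without argument. Your proposal supplies a correct and standard proof, deducing balancing from the Duality Theorem by identifying the star of $P$ with the normal fan of the dual polygon $\sigma$ and then using that the oriented edge vectors of $\partial\sigma$ sum to zero; this is essentially the argument one finds in \cite{MS}. The one point you flag as delicate---the uniform sign in $w_{L_i}\mathbf{v}_{P,L_i}=\rho(\mathbf{u}_i)$---is indeed the only place requiring care, and your justification via the normal-fan description (the region where a vertex $\bi$ dominates is its normal cone, so each $L_i$ is the outer normal ray of $\tau_i$) handles it correctly.
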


\begin{notation}
For a positive number $d\in \bR_{>0}$, we write $U_d=\{(x, y)\in \bR^2\ |\ x^2+y^2<d\}$.
\end{notation}

It is well known that for a tropical plane curve $\Gamma$ and a sufficiently large $d\in \bR_{>0}$, the set $\Gamma \setminus U_d$ is a disjoint union of finite rays and the direction vectors of these rays form the recession fan of $\Gamma$.

\begin{definition}
Let $V(f)$ be a tropical plane curve.
The \textit{(weighted) recession fan} of $V(f)$ is the tropical plane curve defined by $f'=\bigoplus_{(i, j)\in \Newt(f)}x^iy^j$.
\end{definition}

\begin{lemma}\label{bunkai}
Let $V(f)$ be the tropical plane curve defined by a tropical polynomial $f=\bigoplus_{i, j}x^iy^j$.
Then, the tropical curve $V(f)$ is irreducible if and only if $\Newt(f)$ is irreducible.
\end{lemma}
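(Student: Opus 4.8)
The plan is to use the fact that, since every coefficient of $f$ is $0$, the dual subdivision $\Delta_f$ is the trivial one $\{\Newt(f)\}$; by the Duality Theorem (Theorem~\ref{dualitytheorem}) the curve $V(f)$ therefore coincides with its own (weighted) recession fan, and each of its rays is orthogonal to an edge of $\Newt(f)$ whose lattice length equals the weight of that ray. Two standard facts about convex lattice polygons are then in force: (i) the weighted recession fan of $\Newt(f)$ --- equivalently, the collection of outer edge-normal directions together with the edge lattice lengths --- determines $\Newt(f)$ up to translation; and (ii) since the normal fan of a Minkowski sum $P+Q$ is the common refinement of the normal fans of $P$ and $Q$, the weighted recession fan of $P+Q$ is the union, with weights added, of those of $P$ and $Q$. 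For a tropical polynomial $p$ I write $p'=\bigoplus_{(i,j)\in\Newt(p)}x^iy^j$ for its constant-coefficient ``recession version'', so $\Newt(p')=\Newt(p)$ and, by definition, $V(p')$ is the recession fan of $V(p)$.

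For the ``if'' direction I would argue as follows. Suppose $\Newt(f)=\Newt(g)+\Newt(h)$ with neither summand a point. Then $g'\odot h'$ has constant coefficients and $\Newt(g'\odot h')=\Newt(g)+\Newt(h)=\Newt(f)$, so $g'\odot h'$ and $f$ have the same (trivial) dual subdivision over the same polygon and hence, by the Duality Theorem, $V(f)=V(g'\odot h')$, which equals $V(g')\cup V(h')$ by Lemma~\ref{Jos4.6}. Because $\Newt(g)$ and $\Newt(h)$ are not points, $V(g')$ and $V(h')$ are nonempty, so this exhibits $V(f)$ as reducible.

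For the ``only if'' direction, suppose $V(f)=V(g)\cup V(h)$ with $V(g),V(h)\neq\emptyset$ (necessarily $g,h\neq-\infty$, since $V(f)$ is a curve), so $V(f)=V(g\odot h)$ by Lemma~\ref{Jos4.6}. Taking recession fans of both sides: the recession fan of $V(f)$ is $V(f)$ itself, while, since far from the origin a weighted curve is the union of the far parts of its pieces, the recession fan of $V(g\odot h)=V(g)\cup V(h)$ is $V(g')\cup V(h')=V(g'\odot h')$. Hence $V(f)=V(g'\odot h')$ with both $f$ and $g'\odot h'$ of constant coefficients, so fact (i) forces $\Newt(f)$ and $\Newt(g'\odot h')=\Newt(g)+\Newt(h)$ to be translates of one another. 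Absorbing the translating lattice vector into $h$ by multiplying by a monomial, we obtain $\Newt(f)=\Newt(g)+\Newt(h_0)$ with $\Newt(h_0)$ a translate of $\Newt(h)$. Finally $V(g),V(h)\neq\emptyset$ forces each of $g,h$ to have at least two monomials, so $\Newt(g)$ and $\Newt(h_0)$ are not points and $\Newt(f)$ is reducible.

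The main point requiring care is the faithful bookkeeping of edge weights: one must check that taking recession fans behaves well under the weighted union $V(g\odot h)=V(g)\cup V(h)$ and that the weighted recession fan really does recover the Newton polygon up to translation --- both of which rest on the orthogonality clause of the Duality Theorem and on the identification of weights with lattice lengths of the dual edges. A secondary subtlety is the degenerate case $\dim\Newt(f)=1$, in which $V(f)$ is a family of parallel lines rather than a fan with a single vertex; the recession-fan reasoning is unchanged, but one should make sure no step tacitly assumed the presence of a vertex.
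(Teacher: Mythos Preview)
Your argument is correct and follows essentially the same route as the paper: both directions hinge on passing between a decomposition $V(f)=V(g)\cup V(h)=V(g\odot h)$ and a Minkowski decomposition $\Newt(f)=\Newt(g)+\Newt(h)$, using the constant-coefficient polynomials $g',h'$ (the paper's $f_1,f_2$) in the ``if'' direction. The one difference is that where the paper simply asserts ``$V(f)=V(g\odot h)$, hence $\Newt(f)$ is a translate of $\Newt(g\odot h)$'', you supply an explicit justification via recession fans and the fact that the weighted outer-normal data determines a lattice polygon up to translation; this is a genuine elaboration of a step the paper leaves implicit, not a different strategy.
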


\begin{proof}
Assume that there exist two tropical plane curves $V(g)$ and $V(h)$ such that $V(f)=V(g)\cup V(h)$.
Then, neither $\Newt(g)$ nor $\Newt(h)$ is a point, and by Lemma \ref{Jos4.6}, $V(f)=V(g)\cup V(h)=V(g\odot h)$.
Then, $\Newt(f)$ is equal to a translation of $\Newt(g\odot h)$.
We can assume that $\Newt(f)=\Newt(g\odot h)$ by multiplying $g$ by a unit.
Thus, $\Newt(f)$ is expressed as the Minkowski sum of two convex lattice polytopes $\Newt(g)$ and $\Newt(h)$.

Next, assume that $\Newt(f)$ is expressed as the Minkowski sum of two convex lattice polytopes $\Delta_1$ and $\Delta_2$, where neither of them is a point.
Let $S_l=\{(i, j)\in \bZ^2\ |\ \text{$(i, j)\in \Delta_l$}\}$ and $f_l=\bigoplus_{(i, j)\in S_l}x^iy^j$ ($l=1, 2$).
Then, it is clear that $V(f)=V(f_1)\cup V(f_2)$.
\end{proof}

\begin{definition}
We define $\Divp:=\{L\in \bR^2\ |\ \text{$L$ is a line, a ray or a line segment}\}$ and
\begin{eqnarray*} 
\Divs:=\left\{ \sum_{L\in \Divp}\!\!\!w_LL\ \ (\text{formal sum})\ \middle| 
\begin{array}{l}
\text{$w_L\in \bZ$,}\\
\text{$\#\{L\in \Divp\ |\ w_L\neq 0\}<\infty$}
\end{array}
\right\}.
\end{eqnarray*}
For two elements $\sum_{L\in \Divp}w_LL\in \Divs$ and $\sum_{L\in \Divp}w'_LL\in \Divs$, we define the sum of them as $\sum_{L\in \Divp}(w_L+w'_L)L$ and denote it by $\sum_{L\in \Divp}w_LL+\sum_{L\in \Divp}w'_LL$.
For $L_1, L_2\in \Divp$ satisfying $\#(L_1\cap L_2)=1$ and $\exists L_3\in \Divp\ \text{s.t.}\ L_1\cup L_2=L_3$, and for each integer $n\in \bZ$, we identifie $\sum_{L\in \Divp}w_LL$ and $\sum_{L\in \Divp \setminus \{L_1, L_2, L_3\}}w_LL+(w_{L_1}-n)L_1+(w_{L_2}-n)L_2+(w_{L_3}+n)L_3$.
The above identification defines an equivalence relation over $\Divs$.
The quotient set of this equivalence relation is denoted by $\Div$.
The addition in $\Div$ is induced by the addition in $\Divs$.
\end{definition}

\begin{remark}
Since a tropical curve can be regarded as a sum of weighted edges, it can be regarded as an element of $\Div$.
\end{remark}

\begin{notation}
Let $\Gamma$ be a tropical plane curve.
By multiplying the weights of the edges of $\Gamma$ by $-1$, we get the element of $\Div$.
We denote it by $-\Gamma$.
\end{notation}

\begin{notation}
Let $\Gamma_1$ and $\Gamma_2$ be tropical plane curves.
Then, the sum of $\Gamma_1$ and $-\Gamma_2$ define the element of $\Div$ and we denote it by $\Gamma_1 \oslash \Gamma_2$.
\end{notation}

In \cite{TW}, two notions of complexity for a tropical polynomial are considered.

\begin{definition}\cite[Definition 1]{TW}\label{Comp}
Let $f, f_1, \dots, f_n$ be tropical polynomials.
The \textit{monomial complexity} of $f$, denoted by $\mComp(f)$, is the number of linear regions of the function defined by $f$.
If $f=f_1\odot \dots \odot f_n$, the \textit{factorization complexity} of $f$ with respect to $(f_1, \dots, f_n)$, denoted by $\fComp(f_1, \dots, f_n)$, is defined as $\sum_{i=1}^{n}\mComp(f_i)-(n-1)$.
\end{definition}

In \cite{TW}, the following open question (Open Question 2) is presented: ``Do tropical polynomials have unique irreducible factorization with respect to the factorization complexity?''
We give a counterexample.

\begin{proposition}\label{thm_main3}
The tropical polynomial $x^2y^3\oplus xy^4\oplus x^2y^2\oplus xy^3\oplus x^2y\oplus xy^2\oplus y^3\oplus xy\oplus y^2\oplus x\oplus y$ has two different irreducible factorizations with the smallest factorization complexity.
\end{proposition}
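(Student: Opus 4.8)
The plan is to exhibit two visibly different irreducible factorizations of $P$, each of factorization complexity $5$, and then to prove that $5$ is the minimum over all irreducible factorizations.

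First I would record the geometry. The Newton polytope $\Newt(P)$ is the hexagon with vertices $(0,1),(1,0),(2,1),(2,3),(1,4),(0,3)$; its $11$ lattice points are exactly the exponents occurring in $P$, so every coefficient of $P$ equals $0$ and $\Supp(P)=\Newt(P)\cap\bZ^2$. One also notes that $\Newt(P)$ is the zonotope $[\mathbf 0,(1,-1)]+[\mathbf 0,(1,1)]+[\mathbf 0,(0,2)]$. The first factorization is
\[
P=\bigl(x\oplus xy\oplus xy^2\oplus y\bigr)\odot\bigl(1\oplus y\oplus y^2\oplus xy\bigr),
\]
which I would check by expanding the tropical product and matching the $11$ exponents (all with coefficient $0$). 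The two factors have Newton polytopes $\Conv\{(0,1),(1,0),(1,2)\}$ and $\Conv\{(0,0),(0,2),(1,1)\}$; writing the Minkowski-summand condition as equations in the edge lengths shows each of these triangles admits only the trivial decomposition, so by the lemma on Newton polytopes both factors are irreducible. Since all their coefficients are $0$, their dual subdivisions are trivial, so $\mComp=3$ for each, and $\fComp=3+3-1=5$. The second factorization is
\[
P=(x\oplus y)\odot(1\oplus xy)\odot(1\oplus y)\odot(1\oplus y),
\]
again verified by expansion; here each factor is a binomial, hence irreducible with $\mComp=2$, so $\fComp=2\cdot4-3=5$. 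These two factorizations are different (two factors versus four).

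For minimality I would use two elementary bounds: $\mComp(f)\ge\#\{\text{vertices of }\Newt(f)\}$ (each vertex of $\Newt(f)$ is a vertex of the dual subdivision, whose vertices index the linear regions of $f$), and $\mComp(f)\ge2$ for a non-unit $f$. Suppose $P=f_1\odot\dots\odot f_k$ with all $f_i$ irreducible and $\fComp\le4$; then $\sum\mComp(f_i)\le k+3$, forcing $k\le3$. The case $k=1$ is impossible since $f_1=P$ and $\Newt(P)$ has $6$ vertices. For $k=2$ the multiset $\{\mComp(f_1),\mComp(f_2)\}$ must be $\{2,2\}$ or $\{2,3\}$, so $\Newt(f_1)$ is a segment and $\Newt(f_2)$ is a segment or a triangle; but a sum of two segments is a parallelogram and a segment plus a triangle has at most five edges, whereas $\Newt(P)$ is a hexagon — contradiction. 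For $k=3$ all three $\mComp(f_i)$ equal $2$, so every $\Newt(f_i)$ is a segment; matching edge directions and lengths in the zonotope $\Newt(P)$ then forces, up to translation and reindexing, $\Newt(f_1)=[\mathbf 0,(1,-1)]$, $\Newt(f_2)=[\mathbf 0,(1,1)]$, $\Newt(f_3)=[\mathbf 0,(0,2)]$.

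The crux — and the step I expect to be the main obstacle — is to rule out this last configuration, where one must use the coefficients of $P$ and not only its Newton polytope. Writing $f_3=c_0\oplus c_1y\oplus c_2y^2$ in suitable coordinates, one observes that $\Supp(f_1\odot f_2)$ is just the four-vertex set of the square $\Newt(f_1)+\Newt(f_2)$ — its centre is not a sum of a vertex of $\Newt(f_1)$ and a vertex of $\Newt(f_2)$ — so if $c_1=-\infty$ then $\Supp(f_1\odot f_2\odot f_3)$ has at most $8$ elements and cannot equal the $11$-element support of $P$. Hence $c_1$ is finite, and the lattice point of $\Newt(P)$ of the form (a corner of the square)$\,+\,(0,1)$ is reached only through the middle term of $f_3$; comparing the (zero) coefficients of $P$ at this point and at the two points reached through the endpoints of $\Newt(f_3)$ forces $c_0=c_1=c_2$, so $f_3$ is a unit times $1\oplus y\oplus y^2=(1\oplus y)^{\odot2}$, contradicting its irreducibility. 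This shows the minimum factorization complexity among irreducible factorizations of $P$ is $5$, attained by the two displayed factorizations, which are different. The Newton-polytope bookkeeping by itself only yields $\fComp\ge4$; overcoming the spurious "three-segment" decomposition by exploiting that all coefficients of $P$ vanish is the heart of the argument.
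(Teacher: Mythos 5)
Your proof is correct, and the two factorizations you exhibit coincide with the paper's, namely $(xy^2\oplus xy\oplus x\oplus y)\odot(xy\oplus y^2\oplus y\oplus 0)$ and $(xy\oplus 0)\odot(x\oplus y)\odot(y\oplus 0)\odot(y\oplus 0)$; note only that the tropical multiplicative unit is $0$, not $1$, so your factors should read $0\oplus y$, $0\oplus xy$, etc.\ --- taken literally, a coefficient $1$ would make the products differ from $P$, whose coefficients all equal $0$. Where you genuinely diverge from the paper is the minimality argument. The paper observes that $V(P)$ is a union of four classical lines counted with multiplicity, asserts that inspecting the decompositions of this curve yields exactly the trivial factorization and the two displayed ones, and then compares complexities ($5<6$). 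You instead prove the lower bound $\fComp\ge 5$ directly for every irreducible factorization: the vertex-count bound $\mComp(f)\ge\#\{\text{vertices of }\Newt(f)\}$ forces $k\le 3$ factors, the hexagonal (zonotopal) shape of $\Newt(P)$ kills $k\le 2$, and the residual three-segment case is eliminated by the coefficient computation forcing $c_0=c_1=c_2$, i.e.\ $f_3$ is a unit times $(0\oplus y)^{\odot 2}$. Your route is longer but buys two things: it does not rest on the unproved (and, as stated, somewhat imprecise) claim that the curve has ``exactly three decompositions,'' and it also covers the formal-polynomial notion of irreducibility, under which a quadratic $c_0\oplus c_1y\oplus c_2y^2$ with $c_1<(c_0+c_2)/2$ is irreducible and the three-segment case cannot be dismissed by the univariate fundamental theorem alone. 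One small point to tighten: in the final coefficient step you should use an extreme corner of the square (in your coordinates $(0,1)$ or $(2,1)$), since for those corners all three translates by $(0,0)$, $(0,1)$, $(0,2)$ are uniquely represented in the Minkowski sum, which is what makes all three equations of the form $a+b+c_i=0$ available simultaneously; for the other two corners the translate by $(0,2)$ is reached in two ways and yields only an inequality.
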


\begin{proof}
Let $f=x^2y^3\oplus xy^4\oplus x^2y^2\oplus xy^3\oplus x^2y\oplus xy^2\oplus y^3\oplus xy\oplus y^2\oplus x\oplus y$.
Then, the tropical curve $V(f)$ is the union of four lines counted with multiplicity.
When $f=f_1\odot f_2$, we have $V(f)=V(f_1)\cup V(f_2)$ by Lemma \ref{Jos4.6}.
Thus, a factorization of $f$ corresponds to a factorization of $V(f)$.
By looking at $V(f)$, it is easy to show that the tropical polynomial $f$ has exactly three decompositions, namely the obvious decomposition, $f=(xy^2\oplus xy\oplus x\oplus y)(xy\oplus y^2\oplus y\oplus 0)$ and $f=(xy\oplus 0)(x\oplus y)(y\oplus 0)(y\oplus 0)$.
Thus, $f$ has exactly two irreducible factorizations as polynomials:
\begin{eqnarray*}
&&x^2y^3\oplus xy^4\oplus x^2y^2\oplus xy^3\oplus x^2y\oplus xy^2\oplus y^3\oplus xy\oplus y^2\oplus x\oplus y\\
&&=(xy^2\oplus xy\oplus x\oplus y)(xy\oplus y^2\oplus y\oplus 0)\\
&&=(xy\oplus 0)(x\oplus y)(y\oplus 0)(y\oplus 0).\end{eqnarray*}
The corresponding subdivisions are as in Figure \ref{fComp}.
The factorization complexities are the following:
\begin{eqnarray*}
\fComp(xy^2\oplus xy\oplus x\oplus y, xy\oplus y^2\oplus y\oplus 0)&=&\fComp(xy\oplus 0, x\oplus y, y\oplus 0, y\oplus 0)\\
&=&5\\
&<&6\\
&=&\fComp(f).
\end{eqnarray*}
\end{proof}

\begin{figure}[H]
\centering
\begin{tikzpicture}
\coordinate (A1) at (0.5,-1);
\coordinate (A2) at (1,-0.5);
\coordinate (A3) at (1,0.5);
\coordinate (A4) at (0.5,1);
\coordinate (A5) at (0,0.5);
\coordinate (A6) at (0,-0.5);
\coordinate (A7) at (2,0);
\coordinate (A8) at (2.5,-0.5);
\coordinate (A9) at (2.5,0.5);
\coordinate (A10) at (3.5,-0.5);
\coordinate (A11) at (4,0);
\coordinate (A12) at (3.5,0.5);
\coordinate (A13) at (5,-0.25);
\coordinate (A14) at (5.5,0.25);
\coordinate (A15) at (6.5,0.25);
\coordinate (A16) at (7,-0.25);
\coordinate (A17) at (8,-0.25);
\coordinate (A18) at (8,0.25);
\coordinate (A19) at (9,-0.25);
\coordinate (A20) at (9,0.25);
\draw (A1)--(A2)--(A3)--(A4)--(A5)--(A6)--cycle;
\draw (A7)--(A8)--(A9)--cycle;
\draw (A10)--(A11)--(A12)--cycle;
\draw (A13)--(A14);
\draw (A15)--(A16);
\draw (A17)--(A18);
\draw (A19)--(A20);
\coordinate [label=below:\text{$=$}] (a) at (1.5,0.16);
\coordinate [label=below:\text{$+$}] (b) at (3,0.23);
\coordinate [label=below:\text{$=$}] (c) at (4.5,0.16);
\coordinate [label=below:\text{$+$}] (d) at (6,0.23);
\coordinate [label=below:\text{$+$}] (b) at (7.5,0.23);
\coordinate [label=below:\text{$+$}] (c) at (8.5,0.23);
\end{tikzpicture}
\caption{Two different factorizations of a polygon.}
\label{fComp}
\end{figure}
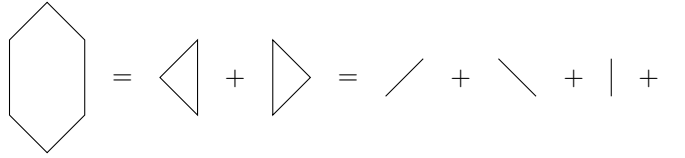

\section{The volume of a pair of tropical polynomials}

When a tropical rational function $\varphi$ is given, we can express $\varphi$ as $\varphi=f\oslash g$, where $f$ and $g\neq -\infty$ are tropical polynomials.
In \cite{TW}, a preorder on the set of tropical polynomials is defined as follows.
We define $(f_1, g_1)\leq_{\mComp}(f_2, g_2)$ if $\mComp(f_1)\leq \mComp(f_2)$ and $\mComp(g_1)\leq \mComp(g_2)$.
When $f_1$, $g_1$, $f_2$ and $g_2$ are expressed as
\begin{eqnarray*}
f_1&=&f_{11}\odot \dots \odot f_{1k},\\
g_1&=&g_{11}\odot \dots \odot g_{1l},\\
f_2&=&f_{21}\odot \dots \odot f_{2m},\\
g_2&=&g_{21}\odot \dots \odot g_{2n},
\end{eqnarray*}
we define $(f_{11}, \dots f_{1k}; g_{11}, \dots, g_{1l})\leq_{\fComp}(f_{21} \dots, f_{2m}; g_{21}, \dots, g_{2n})$ if we have
\begin{eqnarray*}
\fComp(f_{11}, \dots, f_{1k})&\leq&\fComp(f_{21}, \dots, f_{2m}),\\
\fComp(g_{11}, \dots, g_{1l})&\leq&\fComp(g_{21}, \dots, g_{2n}).
\end{eqnarray*}
However, there are pairs $(f_1, g_1)$ and $(f_2, g_2)$ satisfying $\mComp(f_1)> \mComp(f_2)$ and $\mComp(g_1)<\mComp(g_2)$.
This is also true in the case of the factorization complexity.
To solve the above problems, we define the volume $\vol(f, g)$ for a pair $(f, g)$.
This satisties $\vol(f_1, g_1)\leq \vol(f_2, g_2)$ or $\vol(f_1, g_1)\geq \vol(f_2, g_2)$ for any pairs $(f_1, g_1)$ and $(f_2, g_2)$.
The following theorem explains why we consider the volume of the pair $(f, g)$.

\begin{theorem}[The duality theorem for tropical rational functions]\label{f/g}
Let $\varphi$ be a tropical rational function and $f\in\bT[x_1^{\pm1}, \dots, x_n^{\pm1}]$ and $g\in\bT[x_1^{\pm1}, \dots, x_n^{\pm1}]\setminus \{-\infty\}$ tropical polynomials satisfying $\varphi=f\oslash g$.
Then, we have the following:
\begin{eqnarray*}
V(f\oplus (x_{n+1}\odot g))&=&\{(\mathbf{x}, \varphi(\mathbf{x}))\in \bR^{n+1}\ |\ \mathbf{x} \in \bR^n, \varphi(\mathbf{x})\neq -\infty\}\\
&&\cup \{(\mathbf{x}, x_{n+1})\in \bR^{n+1}\ |\ \mathbf{x}\in V(f), x_{n+1}< \varphi(\mathbf{x})\}\\
&&\cup \{(\mathbf{x}, x_{n+1})\in \bR^{n+1}\ |\ \mathbf{x}\in V(g), x_{n+1}> \varphi(\mathbf{x})\}.
\end{eqnarray*}
\end{theorem}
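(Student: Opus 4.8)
The plan is to compute $V(f\oplus(x_{n+1}\odot g))$ directly from the definition of a tropical hypersurface, by classifying each point $(\mathbf{x},x_{n+1})\in\bR^{n+1}$ according to the sign of $x_{n+1}-\varphi(\mathbf{x})$. Write $F:=f\oplus(x_{n+1}\odot g)$, $f=\bigoplus_{\mathbf{i}}c_{\mathbf{i}}\mathbf{x}^{\mathbf{i}}$ and $g=\bigoplus_{\mathbf{j}}d_{\mathbf{j}}\mathbf{x}^{\mathbf{j}}$.

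First I would record the structure of $F$: its monomials are the ``$f$-monomials'' $c_{\mathbf{i}}\mathbf{x}^{\mathbf{i}}x_{n+1}^{0}$ with $c_{\mathbf{i}}\neq-\infty$ and the ``$g$-monomials'' $d_{\mathbf{j}}\mathbf{x}^{\mathbf{j}}x_{n+1}^{1}$ with $d_{\mathbf{j}}\neq-\infty$; since their exponents in $x_{n+1}$ are $0$ and $1$ respectively, every $f$-monomial has an exponent vector distinct from every $g$-monomial. I would also note the elementary facts that $g(\mathbf{x})\in\bR$ for every $\mathbf{x}$ (because $g\neq-\infty$), that $f(\mathbf{x})\in\bR$ for every $\mathbf{x}$ when $f\neq-\infty$ whereas $f\equiv-\infty$ when $f=-\infty$, and that in all cases $F(\mathbf{x},x_{n+1})=\max\{f(\mathbf{x}),\,g(\mathbf{x})+x_{n+1}\}$ and $\varphi(\mathbf{x})=f(\mathbf{x})-g(\mathbf{x})$ with the convention $-\infty-t=-\infty$; in particular $\varphi$ is finite everywhere unless $f=-\infty$, in which case $\varphi\equiv-\infty$.

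The heart of the argument is then a case split for a fixed $(\mathbf{x},x_{n+1})$. If $x_{n+1}<\varphi(\mathbf{x})$ (vacuous when $f=-\infty$), then $F(\mathbf{x},x_{n+1})=f(\mathbf{x})>g(\mathbf{x})+x_{n+1}$, so $F(\mathbf{x},x_{n+1})$ is attained only by $f$-monomials of $F$; hence it is attained by two distinct monomials of $F$ exactly when $f(\mathbf{x})$ is attained by two distinct monomials of $f$, i.e.\ exactly when $\mathbf{x}\in V(f)$. This produces the second set on the right-hand side. The case $x_{n+1}>\varphi(\mathbf{x})$ is entirely analogous, with $V(g)$ in place of $V(f)$, and produces the third set (this case also subsumes the degenerate situation $f=-\infty$). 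Finally, if $x_{n+1}=\varphi(\mathbf{x})$ --- which forces $\varphi(\mathbf{x})\in\bR$ and hence $f\neq-\infty$ --- then $F(\mathbf{x},x_{n+1})=f(\mathbf{x})=g(\mathbf{x})+x_{n+1}$ is attained by at least one $f$-monomial and at least one $g$-monomial of $F$, which are distinct by the exponent observation, so $(\mathbf{x},x_{n+1})\in V(F)$ unconditionally; this set of points is exactly $\{(\mathbf{x},\varphi(\mathbf{x}))\,|\,\mathbf{x}\in\bR^n,\ \varphi(\mathbf{x})\neq-\infty\}$. Since the three conditions $x_{n+1}<\varphi(\mathbf{x})$, $x_{n+1}=\varphi(\mathbf{x})$, $x_{n+1}>\varphi(\mathbf{x})$ partition $\bR^{n+1}$, taking the union gives the claimed equality.

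This is essentially a careful unwinding of the definition of $V(\cdot)$, so I do not anticipate a genuine obstacle; the only delicate points are the bookkeeping with the $-\infty$ conventions (especially the degenerate case $f=-\infty$, where $\varphi\equiv-\infty$ and only the $V(g)$-term survives, the first two sets on the right-hand side being empty), and the observation that an $f$-monomial and a $g$-monomial of $F$ always have different exponent vectors --- which is exactly what forces the whole graph of $\varphi$ into $V(F)$ in the boundary case.
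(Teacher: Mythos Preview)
Your proof is correct and follows essentially the same approach as the paper: both arguments reduce to the trichotomy $x_{n+1}\lessgtr\varphi(\mathbf{x})$ and identify which monomials of $F$ can attain the maximum in each case. Your presentation is slightly more economical in that you establish an ``if and only if'' within each case rather than proving the two inclusions separately, but the underlying idea is identical.
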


\begin{proof}
When $\varphi=-\infty$, the numerator $f$ must be $-\infty$.
Then, both of the sets $\{(\mathbf{x}, \varphi(\mathbf{x}))\in \bR^{n+1}\ |\ \mathbf{x} \in \bR^n, \varphi(\mathbf{x})\neq -\infty\}$ and $\{(\mathbf{x}, x_{n+1})\in \bR^{n+1}\ |\ \mathbf{x}\in V(f), x_{n+1}< \varphi(\mathbf{x})\}$ are empty and the following holds:
\begin{eqnarray*}
V(f\oplus (x_{n+1}\odot g))=V(x_{n+1}\odot g)=\{(\mathbf{x}, x_{n+1})\in \bR^{n+1}\ |\ \mathbf{x}\in V(g), x_{n+1}\in \bR\}.
\end{eqnarray*}

Next, assume that $\varphi\neq -\infty$.
Then, we have $f\in\bT[x_1^{\pm1}, \dots, x_n^{\pm1}]\setminus \{-\infty\}$.
Let $P=(p_1, \dots, p_{n+1})\in V(f\oplus (x_{n+1}\odot g))$.
Assume that $P\notin \{(\mathbf{x}, \varphi(\mathbf{x}))\in \bR^{n+1}\ |\ \mathbf{x} \in \bR^n, \varphi(\mathbf{x})\neq -\infty\}$.
First, we consider the case when $p_{n+1}>\varphi (p_1, \dots, p_n)$.
Then, we have
\[
p_{n+1}+g(p_1, \dots, p_n)>f(p_1, \dots, p_n),
\]
and hence, we have
\[
(x_{n+1}\odot g)(P)>f(P).
\]
Combined with the assumption that $P\in V(f\oplus (x_{n+1}\odot g))$, it follows that at least two terms of $x_{n+1}\odot g$ take the value $(x_{n+1}\odot g)(P)$ at the point $P$, and hence, we have $(p_1, \dots, p_n)\in V(g)$.
Thus, $P\in \{(\mathbf{x}, x_{n+1})\in \bR^{n+1}\ |\ \mathbf{x}\in V(g), x_{n+1}> \varphi(\mathbf{x})\}$ in this case.
In the same way, when $p_{n+1}<\varphi (p_1, \dots, p_n)$, we have $P\in \{(\mathbf{x}, x_{n+1})\in \bR^{n+1}\ |\ \mathbf{x}\in V(f), x_{n+1}< \varphi(\mathbf{x})\}$.
Thus, we have
\begin{eqnarray*}
V(f\oplus (x_{n+1}\odot g))&\subset &\{(\mathbf{x}, \varphi(\mathbf{x}))\in \bR^{n+1}\ |\ \mathbf{x} \in \bR^n, \varphi(\mathbf{x})\neq -\infty\}\\
&&\cup \{(\mathbf{x}, x_{n+1})\in \bR^{n+1}\ |\ \mathbf{x}\in V(f), x_{n+1}< \varphi(\mathbf{x})\}\\
&&\cup \{(\mathbf{x}, x_{n+1})\in \bR^{n+1}\ |\ \mathbf{x}\in V(g), x_{n+1}> \varphi(\mathbf{x})\}.
\end{eqnarray*}

Finally, we show the reverse inclusion under the assumption that $\varphi\neq -\infty$.
Let $(a_1, \dots, a_{n+1})\in \{(\mathbf{x}, \varphi(\mathbf{x}))\in \bR^{n+1}\ |\ \mathbf{x} \in \bR^n, \varphi(\mathbf{x})\neq -\infty\}$.
Then, we have
\begin{eqnarray*}
f(a_1, \dots, a_n)&=&(f(a_1, \dots, a_n)-g(a_1, \dots, a_n))+g(a_1, \dots, a_n)\\
&=&(x_{n+1}\odot g)(a_1, \dots, a_{n+1}),
\end{eqnarray*}
and hence, there exist a term $t_1$ of $f$ and a term $t_2$ of $x_{n+1}\odot g$ satisfying the equation
\[
t_1(a_1, \dots, a_{n+1})=t_2(a_1, \dots, a_{n+1})\geq (f\oplus (x_{n+1}\odot g))(a_1, \dots, a_{n+1}).
\]
It follows that $(a_1, \dots, a_{n+1})\in V(f\oplus (x_{n+1}\odot g))$.
From the above, we have $\{(\mathbf{x}, \varphi(\mathbf{x}))\in \bR^{n+1}\ |\ \mathbf{x} \in \bR^n, \varphi(\mathbf{x})\neq -\infty\}\subset V(f\oplus (x_{n+1}\odot g))$.
Next, let $(b_1, \dots, b_{n+1})\in \{(\mathbf{x}, x_{n+1})\in \bR^{n+1}\ |\ \mathbf{x}\in V(f), x_{n+1}< \varphi(\mathbf{x})\}$.
Then, we have
\begin{eqnarray*}
&&b_{n+1}<f(b_1, \dots, b_n)-g(b_1, \dots, b_n)\\
&\Leftrightarrow&f(b_1, \dots, b_n)>b_{n+1}+g(b_1, \dots, b_n)=(x_{n+1}\odot g)(b_1, \dots, b_{n+1}).
\end{eqnarray*}
Combined with the fact that $(b_1, \dots, b_n)\in V(f)$, it follows that $(b_1, \dots, b_{n+1})\in V(f\oplus (x_{n+1}\odot g))$.
Thus, we have $\{(\mathbf{x}, x_{n+1})\in \bR^{n+1}\ |\ \mathbf{x}\in V(f), x_{n+1}< \varphi(\mathbf{x})\}\subset V(f\oplus (x_{n+1}\odot g))$.
In the same way, we can show that $\{(\mathbf{x}, x_{n+1})\in \bR^{n+1}\ |\ \mathbf{x}\in V(g), x_{n+1}> \varphi(\mathbf{x})\}\subset V(f\oplus (x_{n+1}\odot g))$.
\end{proof}

Thus, $\Delta_{f\oplus(x_{n+1}\odot g)}$ is, in a sense, dual to the graph of $f\oslash g$.

\begin{definition}
Let $f, g\in \bT[x_1^{\pm1}, \dots, x_n^{\pm1}]$ be tropical polynomials.
If the dimension of the affine span of $\Newt(f\oplus (x_{n+1}\odot g))$ is less than $n+1$, we define $\vol(f, g)=0$.
If the dimension of the affine span of $\Newt(f\oplus (x_{n+1}\odot g))$ is $n+1$, we define $\vol(f, g)$ as the volume of $\Newt(f\oplus (x_{n+1}\odot g))$.
We define the \textit{volume} of the pair $(f, g)$ as $\vol(f, g)$.
\end{definition}

\begin{example}
Let $\varphi(x)$ be the tropical rational function defined by $\varphi(x)=f_1(x)\oslash g_1(x)$, where $f_1(x)=x\oplus 0$ and $g_1(x)=x\oplus 1$ are tropical polynomials in $\bT[x^{\pm1}]$.
We can also express $\varphi(x)$ as $\varphi(x)=f_2(x)\oslash g_2(x)$, where $f_2(x)=(-2)x^2\oplus x\oplus 0$ and $g_2(x)=(-2)x^2\oplus x\oplus 1$.
Here, the volume of the pair $(f_1, g_1)$ is the area of the Newton polytope of $f_1\oplus (y\odot g_1)$ and this is $1$ (see Figure \ref{f-g}).
On the other hand, we have $\vol(f_2, g_2)=2$.
\end{example}

\begin{figure}[H]
\centering
\begin{tikzpicture}
\coordinate (L1) at (5,-0.5);
\coordinate (L2) at (6,-0.5);
\coordinate (L3) at (6.5,0);
\coordinate (L4) at (7.5,0);
\coordinate (L5) at (6,-1);
\coordinate (L6) at (6.5,1);
\draw[->,>=stealth,gray] (5,0)--(7.5,0) node[right]{$x$};
\draw[->,>=stealth,gray] (6,-1)--(6,1) node[right]{$y$};
\draw (6,0) node[gray, above left]{O};
\draw[thick] (L1)--(L2)--(L3)--(L4);
\draw[thick] (L2)--(L5);
\draw[thick] (L3)--(L6);
\coordinate (R1) at (8.5,-0.5);
\coordinate (R2) at (9.5,-0.5);
\coordinate (R3) at (10,0);
\coordinate (R4) at (11,0);
\coordinate (R5) at (9.5,-1);
\coordinate (R6) at (10,1);
\coordinate (R7) at (10.5,-1);
\coordinate (R8) at (10.5,1);
\draw[->,>=stealth,gray] (8.5,0)--(11,0) node[right]{$x$};
\draw[->,>=stealth,gray] (9.5,-1)--(9.5,1) node[right]{$y$};
\draw (9.5,0) node[gray, above left]{O};
\draw[thick] (R1)--(R2)--(R3)--(R4);
\draw[thick] (R2)--(R5);
\draw[thick] (R3)--(R6);
\draw[thick] (R7)--(R8);
\coordinate (A1) at (1.25,-0.5);
\coordinate (A2) at (1.75,-0.5);
\coordinate (A3) at (1.25,0);
\coordinate (A4) at (1.75,0);
\coordinate (B1) at (3,-0.5);
\coordinate (B2) at (3.5,-0.5);
\coordinate (B3) at (4,-0.5);
\coordinate (B4) at (3,0);
\coordinate (B5) at (3.5,0);
\coordinate (B6) at (4,0);
\draw (A1)--(A2)--(A4)--(A3)--cycle;
\draw (A2)--(A3);
\coordinate [label=below:\text{$V(f_1\oplus (y\odot g_1))$}] (a) at (6.25,-1.25);
\coordinate [label=below:\text{$V(f_2\oplus (y\odot g_2))$}] (b) at (9.75,-1.25);
\coordinate [label=below:\text{$\Delta_{f_1\oplus (y\odot g_1)}$}] (c) at (1.5,-1.25);
\draw (B1)--(B3)--(B6)--(B4)--cycle;
\draw (B4)--(B2)--(B5);
\coordinate [label=below:\text{$\Delta_{f_2\oplus (y\odot g_2)}$}] (c) at (3.5,-1.25);
\end{tikzpicture}
\caption{Two representations of the same tropical rational function.}
\label{f-g}
\end{figure}
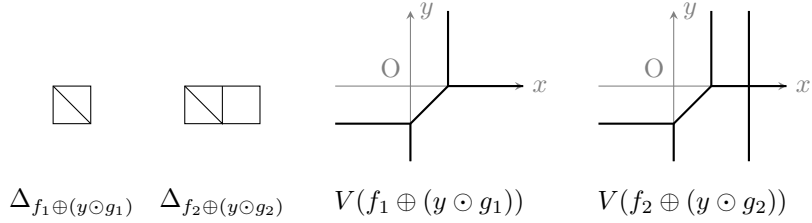

\begin{proposition}
Let $f, g, h\in \bT[x_1^{\pm1}, \dots, x_n^{\pm1}]$ be tropical polynomials.
Assume that $h\neq 0$ and the dimension of the affine span of $\Newt(f\oplus (x_{n+1}\odot g))$ is $n+1$.
Then, we have $\vol(f\odot h, g\odot h)\geq \vol(f, g)$ and the equality holds if and only if $h$ is a unit, i.e., a monomial.
\end{proposition}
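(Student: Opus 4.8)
The plan is to reduce the assertion to an elementary statement about Minkowski sums of lattice polytopes. First I would record the identity
\[
(f\odot h)\oplus\bigl(x_{n+1}\odot(g\odot h)\bigr)=h\odot\bigl(f\oplus(x_{n+1}\odot g)\bigr),
\]
which holds by distributivity of $\odot$ over $\oplus$, where $h$ is regarded as a tropical polynomial in $x_1,\dots,x_{n+1}$ not involving $x_{n+1}$. Since the support of a tropical product $p\odot q$ is the sumset of the supports of $p$ and $q$, and $\Conv(A+B)=\Conv(A)+\Conv(B)$, the Newton polytope of a product is the Minkowski sum of the Newton polytopes of the factors; applying this to the identity gives
\[
\Newt\bigl((f\odot h)\oplus(x_{n+1}\odot g\odot h)\bigr)=\Newt(h)+\Newt\bigl(f\oplus(x_{n+1}\odot g)\bigr).
\]
Write $\Delta:=\Newt(f\oplus(x_{n+1}\odot g))$, which is $(n+1)$-dimensional by hypothesis, and $\Delta_h:=\Newt(h)\subseteq\bR^n\times\{0\}$, a nonempty lattice polytope since $h\neq-\infty$. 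For any $b\in\Delta_h$ the sum $\Delta_h+\Delta$ contains the translate $b+\Delta$, so it is again $(n+1)$-dimensional and $\vol(f\odot h,g\odot h)=\vol(\Delta_h+\Delta)$.

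It then remains to prove the following general fact: if $A\subseteq\bR^d$ is a $d$-dimensional convex polytope and $B\subseteq\bR^d$ is a nonempty convex polytope, then $\vol(A+B)\geq\vol(A)$, with equality if and only if $B$ is a single point. The inequality is immediate, since $A+B$ contains a translate of $A$, and if $B$ is a point then $A+B$ is a translate of $A$. For the converse I would run a prism computation: given $b_0,b_1\in B$ with $v:=b_1-b_0\neq0$, decompose $\bR^d$ as $v^{\perp}\oplus\bR v$ and integrate the one-dimensional slices of $A+[b_0,b_1]$ parallel to $v$ to obtain
\[
\vol\bigl(A+[b_0,b_1]\bigr)=\vol(A)+\lvert v\rvert\cdot\vol_{d-1}\bigl(\pi(A)\bigr),
\]
where $\pi$ denotes the orthogonal projection onto $v^{\perp}$. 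Because $A$ is $d$-dimensional, $\pi(A)$ is $(d-1)$-dimensional, hence $\vol_{d-1}(\pi(A))>0$; combined with $A+B\supseteq A+[b_0,b_1]$ this forces $\vol(A+B)>\vol(A)$ whenever $B$ contains two distinct points. Taking $A=\Delta$ and $B=\Delta_h$, and observing that $\Delta_h=\Newt(h)$ is a point precisely when $h$ is a monomial, i.e.\ a unit, finishes the argument.

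The bookkeeping with Newton polytopes (products correspond to Minkowski sums; full dimensionality is preserved under Minkowski sum with a nonempty polytope) is routine. The only substantive point is the equality case of the Minkowski-sum lemma: Brunn--Minkowski settles it immediately when $\Newt(h)$ is full-dimensional, but it gives only $\vol(A+B)\geq\vol(A)$ when $\Newt(h)$ is a positive-dimensional polytope of dimension $<n+1$, so I expect the main effort to go into the slicing computation above, which handles all cases uniformly.
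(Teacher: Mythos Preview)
Your proof is correct. The distributive identity $(f\odot h)\oplus(x_{n+1}\odot g\odot h)=h\odot(f\oplus(x_{n+1}\odot g))$ is the right organizing observation, and your prism computation cleanly handles the equality case in all dimensions of $\Newt(h)$.

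The paper takes a slightly different route. Rather than invoking distributivity to get a single Minkowski-sum identity at the level of the $(n{+}1)$-dimensional polytope, it translates $h$ so that $0\in\Newt(h)$ and then argues componentwise: $\Newt(f)\subseteq\Newt(f\odot h)$ and $\Newt(g)\subseteq\Newt(g\odot h)$, whence $\Newt(f\oplus(x_{n+1}\odot g))\subseteq\Newt((f\odot h)\oplus(x_{n+1}\odot g\odot h))$, with strict inclusion when $h$ is not a monomial. This is shorter on the page but leaves implicit both why the strict inclusions at the $\Newt(f)$, $\Newt(g)$ level propagate to the big polytope and why strict inclusion of full-dimensional convex polytopes forces strict volume inequality. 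Your approach isolates a reusable lemma (the Minkowski-sum volume inequality with its equality characterization) and proves it explicitly, which makes the strict-inequality step watertight without appealing to those unspoken facts; the paper's approach trades that transparency for brevity.
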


\begin{proof}
We can assume that the constant term of $h$ is not $-\infty$.
Then, it is clear that $\Newt(f)\subset \Newt(f\odot h)$ and $\Newt(g)\subset \Newt(g\odot h)$, and hence, we have $\vol(f\odot h, g\odot h)\geq \vol(f, g)$.
If $h$ is not a monomial, then $\Newt(f)\subsetneq \Newt(f\odot h)$ and $\Newt(g)\subsetneq \Newt(g\odot h)$, and hence, $\vol(f\odot h, g\odot h)> \vol(f, g)$.
If $h$ is a monomial, then $\Newt(f)= \Newt(f\odot h)$ and $\Newt(g)= \Newt(g\odot h)$, and hence, $\vol(f\odot h, g\odot h)= \vol(f, g)$.
\end{proof}

When a tropical rational function $\varphi$ on $\bR^n$ is given, is there a unique expression $\varphi=f\oslash g$ with the minimum volume $\vol(f, g)$?
The answer is yes when $n=1$.

\begin{lemma}\label{f=g}
Let $f(x), g(x)\in \bT[x^{\pm1}]$ be tropical polynomials.
If $f=g$ as functions, then their dual subdivisions are equal.
\end{lemma}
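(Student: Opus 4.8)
The plan is to show that, in one variable, both the Newton polytope and the dual subdivision of a tropical polynomial are recovered from the \emph{set of slopes} of the convex piecewise-linear function it defines; since $f=g$ as functions, they have the same set of slopes, hence the same dual subdivision.

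First I would spell out $\Delta_f$ concretely. Write $f=\bigoplus_i c_i x^i$ with not all $c_i$ equal to $-\infty$, and set $i_0=\min\{i : c_i\neq-\infty\}$ and $i_1=\max\{i:c_i\neq-\infty\}$, so that $\Newt(f)=[i_0,i_1]$. By Definition \ref{dualsubdiv}, $\Delta_f^{\uparrow}$ is the convex hull of $\{(i,\alpha)\in\bZ\times\bR : \alpha\le c_i\}$, whose bounded faces are exactly the vertices and edges of the upper convex hull of the finite set $\{(i,c_i) : c_i\neq-\infty\}$. Projecting these to $\bR$, the vertices of $\Delta_f$ are the first coordinates $s_0<s_1<\dots<s_m$ of the vertices of that upper hull (with $s_0=i_0$ and $s_m=i_1$), and $\Delta_f$ subdivides $[i_0,i_1]$ into the consecutive lattice intervals $[s_0,s_1],\dots,[s_{m-1},s_m]$.

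Next I would establish the dictionary with the function. The map $t\mapsto f(t)=\max_i(c_i+it)$ is convex and piecewise affine with integer slopes; let $S_f\subset\bZ$ be the set of these slopes. The affine function $t\mapsto c_i+it$ coincides with $f$ on an interval with nonempty interior precisely when there is some $t$ with $c_i+it>c_j+jt$ for all $j\neq i$ with $c_j\neq-\infty$, i.e.\ precisely when $(i,c_i)$ is a vertex of $\Conv\{(j,c_j):c_j\neq-\infty\}$ that is extreme in some direction with positive last coordinate --- in other words a vertex of the upper hull. (Coefficient points lying in the relative interior of an edge of the upper hull, or strictly below it, contribute no affine piece to $f$.) Hence $S_f=\{s_0,\dots,s_m\}$, and in particular $\Newt(f)=[\min S_f,\max S_f]$.

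Finally, if $f=g$ as functions then $S_f=S_g$, so $\Newt(f)=\Newt(g)$, and $\Delta_f$ and $\Delta_g$ are subdivisions of the same interval having the same vertex set; a subdivision of an interval is determined by its vertex set, so $\Delta_f=\Delta_g$. The degenerate cases are immediate: if $f\equiv-\infty$ then $\Newt(f)=\emptyset$, and if $f$ is a single monomial then $\Newt(f)$ is the point whose coordinate is the constant slope. The only step requiring care is the dictionary in the previous paragraph --- correctly matching the slopes of $f$ with the vertices of the upper hull of the coefficient points and, via Definition \ref{dualsubdiv}, with the vertices of $\Delta_f$, while discarding coefficient points that are not hull vertices; once this is in hand the conclusion follows from the rigidity of one-dimensional subdivisions.
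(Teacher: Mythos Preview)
Your argument is correct and is a genuinely different, more elementary route than the paper's. The paper proves this lemma by invoking Theorem~\ref{f/g} with denominator $0$: since $f\oslash 0=g\oslash 0$, the tropical curves $V(f\oplus y)$ and $V(g\oplus y)$ coincide (they are the graph of $f=g$ together with downward rays at the breakpoints), and then the standard curve--subdivision duality (Theorem~\ref{dualitytheorem}) forces $\Delta_{f\oplus y}=\Delta_{g\oplus y}$, from which $\Delta_f=\Delta_g$ is read off on the base. You instead bypass both duality theorems and argue directly via Legendre duality in one variable: the vertices of the upper hull of the coefficient points are exactly the slopes of the convex function $t\mapsto f(t)$, and a subdivision of an interval is determined by its vertex set. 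Your approach is self-contained and makes transparent why the one-variable case is rigid; the paper's approach has the virtue of illustrating Theorem~\ref{f/g} immediately after proving it, and would generalise more readily if one wanted an analogous statement in higher dimension (where your ``determined by its vertex set'' step fails).
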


\begin{proof}
The tropical rational function $f(x)\oslash 0$ is equal to $g(x)\oslash 0$ as functions.
In particular, the graphs of $f(x)\oslash 0$ and $g(x)\oslash 0$ are equal.
Combined with Theorem \ref{f/g}, by considering the slopes, it follows that $\Delta_{f(x)\oplus y}=\Delta_{g(x)\oplus y}$.
Thus, we have $\Delta_{f}=\Delta_{g}$.
\end{proof}

\begin{theorem}\label{thm_main1}
For any tropical rational function $\varphi(x) \neq -\infty$ on $\bR$, there is an expression $\varphi(x)=f(x)\oslash g(x)$ with the minimum volume.
If $f'(x)$ and $g'(x)$ also satisty $\varphi(x)=f'(x)\oslash g'(x)$ and $\vol(f', g')=\vol(f, g)$, then the dual subdivision of $f\oplus(y\odot g)$ is a translation of the dual subdivision of $f\oplus(y\odot g)$.
In other words, the dual subdivision of $f\oplus(y\odot g)$ is unique up to translation.
\end{theorem}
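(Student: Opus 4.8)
The plan is to pass from tropical Laurent polynomials in $x$ to convex piecewise integer affine functions on $\bR$ and to control everything through the slope of $\varphi$. Since $\varphi\neq-\infty$ it is a genuine continuous piecewise integer affine function $\bR\to\bR$ with finitely many pieces; write $\varphi'\colon\bR\to\bZ$ for its piecewise constant slope and let $V^{+}(\varphi)$ (resp.\ $V^{-}(\varphi)$) be the sum of the upward jumps (resp.\ of the absolute values of the downward jumps) of $\varphi'$, so that $V^{+}(\varphi)-V^{-}(\varphi)=s_{+}-s_{-}$, where $s_{\pm}$ are the slopes of $\varphi$ near $\pm\infty$. For $p\in\bT[x^{\pm1}]\setminus\{-\infty\}$ put $W_{p}:=\deg p-\ord p$, the lattice width of $\Newt(p)$; as a function $p$ is convex with slope increasing monotonically from $\ord p$ to $\deg p$, so $W_{p}$ equals the total increase of the slope of $p$. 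If $\varphi=f\oslash g$ then the monomials of $f\oplus(y\odot g)$ lie on the lines $y=0$ and $y=1$, hence $\Newt(f\oplus(y\odot g))=\Conv\bigl(([\ord f,\deg f]\times\{0\})\cup([\ord g,\deg g]\times\{1\})\bigr)$ is a trapezoid of height $1$ and $\vol(f,g)=\tfrac12(W_{f}+W_{g})$, the degenerate cases included. (Geometrically, via Theorem~\ref{f/g}, $W_{f}$ and $W_{g}$ are the total weights of the downward and of the upward vertical rays of $V(f\oplus(y\odot g))$.) Since $W_{f}-W_{g}=(\deg f-\deg g)-(\ord f-\ord g)=s_{+}-s_{-}$ depends only on $\varphi$, minimizing $\vol(f,g)$ amounts to minimizing $W_{f}$, equivalently $W_{g}$.

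For the lower bound, write $\varphi'=f'-g'$ with $f',g'$ the slopes of $f,g$, both nondecreasing; at each breakpoint the jump of $\varphi'$ is the jump of $f'$ minus the nonnegative jump of $g'$, so each upward jump of $\varphi'$ is at most the jump of $f'$ there, and summing over the finitely many breakpoints gives $V^{+}(\varphi)\le W_{f}$ and, symmetrically, $V^{-}(\varphi)\le W_{g}$. Hence $\vol(f,g)\ge\tfrac12\bigl(V^{+}(\varphi)+V^{-}(\varphi)\bigr)$ for every representation. To attain this bound, let $g_{0}'$ be the nondecreasing $\bZ$-valued step function that vanishes near $-\infty$ and jumps by $|m|$ at each downward jump $m$ of $\varphi'$, and set $f_{0}':=\varphi'+g_{0}'$; then $f_{0}'$ is nondecreasing, jumps only at the upward jumps of $\varphi'$ and by exactly those amounts, and $f_{0}'-g_{0}'=\varphi'$. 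Integrating $f_{0}'$ and $g_{0}'$ and fixing the constants of integration so that the difference of the primitives is $\varphi$ produces convex piecewise integer affine functions with finitely many pieces, i.e.\ tropical Laurent polynomials $f_{0},g_{0}$ with $\varphi=f_{0}\oslash g_{0}$, $W_{f_{0}}=V^{+}(\varphi)$ and $W_{g_{0}}=V^{-}(\varphi)$; thus $\vol(f_{0},g_{0})=\tfrac12\bigl(V^{+}(\varphi)+V^{-}(\varphi)\bigr)$ is the minimum.

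For uniqueness, let $\varphi=\tilde f\oslash\tilde g$ with $\vol(\tilde f,\tilde g)$ minimal; we may assume $\tilde f,\tilde g$ carry no monomials superfluous for them as functions, since discarding such monomials changes neither the volume nor --- directly from the definition of the dual subdivision --- the subdivision $\Delta_{\tilde f\oplus(y\odot\tilde g)}$. Minimality means $W_{\tilde f}=V^{+}(\varphi)$ and $W_{\tilde g}=V^{-}(\varphi)$, i.e.\ equality throughout the lower bound argument; tracing this back, the slope $\tilde f'$ must jump only at the upward jumps of $\varphi'$ and there by exactly the jump of $\varphi'$, so $\tilde f'=f_{0}'+c$, and as both are $\bZ$-valued, $c\in\bZ$; then $\tilde g'=\tilde f'-\varphi'=g_{0}'+c$. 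Hence $\tilde f$ and $f_{0}\odot x^{c}$ agree up to an additive real constant, and likewise $\tilde g$ and $g_{0}\odot x^{c}$; the identity $\tilde f-\tilde g=\varphi=f_{0}-g_{0}$ forces those two constants to coincide, so $\tilde f=\mu\odot x^{c}\odot f_{0}$ and $\tilde g=\mu\odot x^{c}\odot g_{0}$ for some $\mu\in\bR$, $c\in\bZ$. Multiplying $f_{0}$ and $g_{0}$ by the common monomial $\mu\odot x^{c}$ translates the support of $f_{0}\oplus(y\odot g_{0})$ by $(c,0)$ and shifts all its coefficients by $\mu$, hence translates $\Newt(f_{0}\oplus(y\odot g_{0}))$ and its dual subdivision by $(c,0)$; therefore $\Delta_{\tilde f\oplus(y\odot\tilde g)}$ is a translate of $\Delta_{f_{0}\oplus(y\odot g_{0})}$, as required. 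I expect the delicate point to be exactly this last paragraph --- checking that the two equalities genuinely pin down $\tilde f'$ and $\tilde g'$ up to a common integer constant, and that superfluous monomials may be dropped without affecting $\Delta$ --- together with the routine but necessary dictionary between one-variable tropical Laurent polynomials and convex integer-slope piecewise linear functions with finitely many pieces used in the existence step.
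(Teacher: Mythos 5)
Your proof is correct, and it reaches the conclusion by a somewhat different route than the paper. The paper proves existence non-constructively (the volume lies in $\tfrac{1}{2}\bZ_{\geq 0}$ by Pick's theorem, so the infimum is attained), then invokes the tropical fundamental theorem of algebra to factor $f$ and $g$ uniquely into linear factors $(x\oplus a)^{i}$, observes that a common factor can be cancelled so that minimality forces the ``root'' sets of $f$ and $g$ to be disjoint, and finally cross-multiplies ($f\odot g'=f'\odot g$ as functions) and matches linear factors to conclude that $f'$, $g'$ are monomial multiples of $f$, $g$. Your argument is the convex-analytic translation of this: a linear factor $(x\oplus a)^{i}$ of $f$ is exactly a jump of size $i$ in the slope $f'$ at $a$, your trapezoid formula $\vol(f,g)=\tfrac12(W_f+W_g)$ is the paper's formula $\vol(f,g)=\tfrac12(\sum i_s+\sum j_s)$, the disjointness of roots is your equality case $W_{\tilde f}=V^{+}(\varphi)$, $W_{\tilde g}=V^{-}(\varphi)$, and Lemma~\ref{f=g} plays the same role in both proofs of letting one pass from equality of functions to equality of dual subdivisions. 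What your version buys is an explicit closed form for the minimum, $\tfrac12\bigl(V^{+}(\varphi)+V^{-}(\varphi)\bigr)$, together with an explicit minimizer (the Jordan decomposition of $\varphi'$ into its increasing parts), and it avoids appealing to unique factorization and the cross-multiplication step; the paper's version is shorter at the cost of being non-constructive. The two points you flag as delicate are indeed the ones to check, and both go through: extreme monomials of a one-variable tropical polynomial are never superfluous, so discarding superfluous monomials preserves $\Newt$ and (by the upper-hull description) the dual subdivision, and the equality analysis does pin down $\tilde f'$ as $f_0'$ plus an integer constant.
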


\begin{proof}
Since the area of the Newton polytope of finite lattice points is a multiple of $1/2$ by Pick's theorem and $\varphi \neq -\infty$, there exist $f, g\in \bT[x^{\pm1}]\setminus \{-\infty\}$ such that $\varphi=f\oslash g$ with the minimum volume.
By the fundamental theorem of algebra in the tropical algebra (see \cite[p.5]{MS}), the functions $f$ and $g$ can be written uniquely as tropical products of linear functions:
\begin{eqnarray*}
f&=&\alpha_1x^{i_0}(x\oplus a_1)^{i_1}\odot \dots \odot (x\oplus a_n)^{i_n},\\
g&=&\alpha_2x^{j_0}(x\oplus b_1)^{j_1}\odot \dots \odot (x\oplus b_m)^{j_m},
\end{eqnarray*}
where $\alpha_1, \alpha_2\in \bR$, $i_0, j_0\in \bZ$, $a_1, \dots, a_n, b_1, \dots, b_m\in \bR$ and $i_1,\dots, i_n, j_1, \dots, j_m\in \bZ_{>0}$.
Note that by Lemma \ref{f=g} and Pick's theorem, we have
\[
\vol(f, g)=\frac{\sum_{s=1}^{n}i_s+1+\sum_{s=1}^{m}j_s+1}{2}-1.
\]
Also note that for a real number $a\in \bR$ and tropical polynomials $h(x)$ and $h'(x)$, we have $h\oslash h'=(h\odot(x\oplus a))\oslash (h'\odot(x\oplus a))$.
Combined with the fact that $\vol(f, g)$ is the minimum volume, it follows that $\{a_1, \dots, a_n\}\cap \{b_1, \dots, b_m\}=\emptyset$.

Assume that $f'(x)$ and $g'(x)$ also satisty $\varphi(x)=f'(x)\oslash g'(x)$ and $\vol(f', g')=\vol(f, g)$.
Then, the functions $f'$ and $g'$ can be written uniquely as tropical products of linear functions:
\begin{eqnarray*}
f'&=&\alpha_3x^{k_0}(x\oplus c_1)^{k_1}\odot \dots \odot (x\oplus c_u)^{k_u},\\
g'&=&\alpha_4x^{l_0}(x\oplus d_1)^{l_1}\odot \dots \odot (x\oplus d_v)^{l_v},
\end{eqnarray*}
where $\alpha_3, \alpha_4\in \bR$, $k_0, l_0\in \bZ$, $c_1, \dots, c_u, d_1, \dots, d_v\in \bR$, $k_1,\dots, k_u$, $l_1, \dots, l_v\in \bZ_{>0}$ and $\{c_1, \dots, c_u\}\cap \{d_1, \dots, d_v\}=\emptyset$.
Since $f\oslash g=f'\oslash g'$ as functions, the functions $f\odot g'$ and $f'\odot g$ are equal.
Therefore, the functions $(\alpha_1\odot \alpha_4)x^{i_0+l_0}(x\oplus a_1)^{i_1}\odot \dots \odot (x\oplus a_n)^{i_n}\odot (x\oplus d_1)^{l_1}\odot \dots \odot (x\oplus d_v)^{l_v}$ and $(\alpha_3\odot \alpha_2)x^{k_0+j_0}(x\oplus c_1)^{k_1}\odot \dots \odot (x\oplus c_u)^{k_u}\odot (x\oplus b_1)^{j_1}\odot \dots \odot (x\oplus b_m)^{j_m}$ are equal.
These expressions as tropical products of linear functions are unique.
Combined with the fact that $\{a_1, \dots, a_n\}\cap \{b_1, \dots, b_m\}=\emptyset$ and $\{c_1, \dots, c_u\}\cap \{d_1, \dots, d_v\}=\emptyset$, it follows that $\alpha_1-\alpha_3=\alpha_2-\alpha_4$, $i_0-k_0=j_0-l_0$, $n=u$, $a_s=c_s$ ($1\leq s \leq n$), $m=v$ and $b_t=d_t$ ($1\leq t \leq m$) by renumbering the indices.
Combined with Lemma \ref{f=g}, it follows that $\Delta^{\uparrow}_f$ (resp. $\Delta^{\uparrow}_g$) is equal to the translation of $\Delta^{\uparrow}_{f'}$ (resp. $\Delta^{\uparrow}_{g'}$) by the vector $(i_0-k_0, \alpha_1-\alpha_3)\in \bZ\times \bR$, and hence, $\Delta^{\uparrow}_{f\oplus (y\odot g)}$ is equal to the translation of $\Delta^{\uparrow}_{f'\oplus (y\odot g')}$ by the vector $(i_0-k_0, \alpha_1-\alpha_3)$.
Therefore, $\Delta_{f\oplus (y\odot g)}$ is equal to the translation of $\Delta_{f'\oplus (y\odot g')}$ by the vector $(i_0-k_0, \alpha_1-\alpha_3)$.
\end{proof}

On the other hand, there can be two different minimum volume expressions of a tropical rational function on $\bR^2$.
Before stating the proposition, let us look at several lemmas.

\begin{notation}
For a subset $S\subset \bR^n$, we denote the convex hull of $S$ by $\Conv(S)$.
\end{notation}

\begin{lemma}\label{vollem}
Let $\Delta$ and $\Delta'$ be bounded convex sets in $\bR^2$.
Let $c\in \bR_{>0}$, $\Delta_0:=\{(x, y, 0)\in \bR^3\ |\ (x, y)\in \Delta_1\}$, $\Delta'_c:=\{(x, y, c)\in \bR^3\ |\ (x, y)\in \Delta_2\}$, $\bv=(v_1, v_2)\in \bR^2$ and  $\Delta'_c+\bv:=\{(x+v_1, y+v_2, c)\in \bR^3\ |\ (x, y)\in \Delta_2\}$.
Then, the volume of the convex hull of $\Delta_0$ and $\Delta'_c$ is equal to the volume of the convex hull of $\Delta_0$ and $\Delta'_c+\bv$.
\end{lemma}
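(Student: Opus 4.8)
The plan is to exhibit an explicit volume-preserving affine automorphism of $\bR^3$ that fixes $\Delta_0$ pointwise and carries $\Delta'_c$ onto $\Delta'_c+\bv$. Since an affine bijection commutes with the convex-hull operation, such a map sends $\Conv(\Delta_0\cup\Delta'_c)$ to $\Conv(\Delta_0\cup(\Delta'_c+\bv))$, and being volume-preserving it then gives the equality of volumes at once. (Throughout, $\Delta$ and $\Delta'$ being bounded convex sets, all the solids involved are bounded convex, hence measurable, so their volumes are well defined.)

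Concretely, I would define the shear $\Phi\colon\bR^3\to\bR^3$ by
\[
\Phi(x,y,z)=\left(x+\frac{v_1}{c}\,z,\ y+\frac{v_2}{c}\,z,\ z\right).
\]
First I record that $\Phi$ is a linear automorphism whose matrix is upper triangular with $1$'s on the diagonal, so $\det\Phi=1$ and hence $\Phi$ preserves Lebesgue measure on $\bR^3$. Next I check its action on the two horizontal slices: on the plane $\{z=0\}$ it is the identity, so $\Phi(\Delta_0)=\Delta_0$; on the plane $\{z=c\}$ it acts by $(x,y,c)\mapsto(x+v_1,y+v_2,c)$, so $\Phi(\Delta'_c)=\Delta'_c+\bv$.

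Finally, since $\Phi$ is an affine bijection it commutes with taking convex hulls, so
\[
\Phi\bigl(\Conv(\Delta_0\cup\Delta'_c)\bigr)=\Conv\bigl(\Phi(\Delta_0)\cup\Phi(\Delta'_c)\bigr)=\Conv\bigl(\Delta_0\cup(\Delta'_c+\bv)\bigr),
\]
and applying the fact that $\Phi$ preserves volume finishes the proof. There is essentially no obstacle here; the only points deserving a line of justification are that a unipotent linear map has determinant $1$ and that an affine bijection commutes with $\Conv(\cdot)$. As an alternative one could argue by Cavalieri's principle: the horizontal slice of $\Conv(\Delta_0\cup\Delta'_c)$ at height $t\in[0,c]$ is the Minkowski combination $\tfrac{c-t}{c}\Delta+\tfrac{t}{c}\Delta'$, and replacing $\Delta'$ by $\Delta'+\bv$ translates this slice by $\tfrac{t}{c}\bv$, leaving its area unchanged; integrating the slice areas over $t\in[0,c]$ then gives the claim.
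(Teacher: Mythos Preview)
Your argument is correct. The paper proves this lemma by Cavalieri's principle---exactly the alternative you sketch at the end: it shows that the slice at height $h\in[0,c]$ of the translated convex hull is the translate by $\tfrac{h}{c}\bv$ of the original slice, so the slice areas agree and integration over $h$ gives equal volumes. Your primary route via the shear $\Phi(x,y,z)=(x+\tfrac{v_1}{c}z,\,y+\tfrac{v_2}{c}z,\,z)$ is a genuinely different and cleaner argument: rather than computing slices, you observe that the two solids are related by a unimodular linear map, so equality of volumes is immediate. This buys you a one-line proof with no computation and makes the obvious generalizations (higher dimension, arbitrary parallel hyperplanes) transparent; the paper's slice-by-slice approach is more hands-on and in effect rederives the fact that horizontal sections of such a convex hull are Minkowski combinations of the top and bottom faces.
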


\begin{proof}
Let $A$ be the convex hull of $\Delta_0$ and $\Delta'_c$, and $A_{\bv}$ the convex hull of $\Delta_0$ and $\Delta'_c+\bv$.
Let $0\leq h\leq c$.
Then, for a point $(x, y, h)\in A\cap (z=h)$, there exist $(p_1, p_2, 0)\in \Delta_0$ and $(q_1, q_2, c)\in \Delta'_c$ such that $(x, y)=\frac{h}{c}(q_1-p_1, q_2-p_2)$.
Thus, we have $(x+\frac{h}{c}v_1, y+\frac{h}{c}v_2)=\frac{h}{c}(q_1+v_1-p_1, q_2+v_2-p_2)$, and hence, $(x+\frac{h}{c}v_1, y+\frac{h}{c}v_2, h)\in A_{\bv}\cap (z=h)$.
Therefore, the translation of $A\cap (z=h)$ by the vector $\frac{h}{c}\bv$ is contained in $A_{\bv}\cap (z=h)$.
In the same way, we can show the other inclusion, and hence, $A_{\bv}\cap (z=h)$ is equal to the translation of $A\cap (z=h)$ by the vector $\frac{h}{c}\bv$.
In particular, the area of $A\cap (z=h)$ is equal to the area of $A_{\bv}\cap (z=h)$, and hence, by computing the integral, we can see that the volume of $A$ is equal to the volume of $A_{\bv}$.
\end{proof}

\begin{lemma}\label{area2}
Let $\Delta_1$ be the convex hull of the three points $(0, 0)$, $(1, 0)$ and $(0, 1)$, $\Delta_2$ the convex hull of the three points $(1, 0)$, $(0, 1)$ and $(1, 1)$, and $\Delta$ the convex hull of finitely many lattice points.
Assume that $\Delta$ is not a point.
For $i=1, 2$, the area of the Minkowski sum of $\Delta_i$ and $\Delta$ is $3/2$ if $\Delta$ is a translation of one of the following:
\[
\Conv \{(0, 0), (1, 0)\},\ \Conv \{(0, 0), (0, 1)\},\ \Conv \{(1, 0), (0, 1)\},
\]
is $2$ if $\Delta$ is a translation of $\Delta_i$, and is bigger than or equal to $5/2$ in the other cases.
\end{lemma}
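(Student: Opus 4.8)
The plan is to reduce everything to a computation of normalized (lattice) areas via the elementary mixed-volume identity for planar polytopes: for convex lattice polygons $\Delta_i$ and $\Delta$,
\[
\mathrm{area}(\Delta_i+\Delta)=\mathrm{area}(\Delta_i)+\mathrm{area}(\Delta)+\mathrm{MV}(\Delta_i,\Delta),
\]
where the mixed area $\mathrm{MV}(\Delta_i,\Delta)$ is nonnegative, is monotone in $\Delta$ under inclusion (up to translation), and is translation-invariant in each argument. Since $\Delta_1$ and $\Delta_2$ are the two unit triangles of area $1/2$, it suffices to understand $\mathrm{MV}(\Delta_i,\Delta)$ as $\Delta$ ranges over lattice polygons, keeping in mind the three enumerated segments have $\mathrm{MV}(\Delta_i,\cdot)$ equal to (twice) a known value and that $\mathrm{MV}(\Delta_i,\Delta_i)=\mathrm{area}(\Delta_i)=1/2$ would give area $3/2$ — but the claim says $2$ for $\Delta$ a translate of $\Delta_i$, so actually $\mathrm{MV}(\Delta_i,\Delta_i)=1$; I will just verify these small cases by direct picture rather than invoking a formula.

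First I would handle the three base cases where $\Delta$ is a translate of a primitive segment. For a segment $\Delta=\overline{\mathbf{0}\bv}$ with primitive direction $\bv$, the Minkowski sum $\Delta_i+\Delta$ is the triangle $\Delta_i$ with one extra ``sweep'' in direction $\bv$, and its area is $\mathrm{area}(\Delta_i)$ plus the width of $\Delta_i$ in the direction perpendicular to $\bv$; for each of the three primitive segments $\overline{(0,0)(1,0)}$, $\overline{(0,0)(0,1)}$, $\overline{(1,0)(0,1)}$ this extra term is $1$, giving area $1/2+1=3/2$. (Here I use that translating $\Delta$ does not change the Minkowski sum up to translation, so only the translation class of $\Delta$ matters, exactly as in Lemma \ref{vollem}.) Next, if $\Delta$ is a translate of $\Delta_1$ or $\Delta_2$, a direct drawing shows $\Delta_1+\Delta$ and $\Delta_2+\Delta$ are each a polygon of area $2$ (a unit square with a corner triangle, or a parallelogram-type hexagon); I would just exhibit the vertices and apply the shoelace formula.

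The main work is the residual ``other cases'' bound of $5/2$. The key observation is monotonicity: if $\Delta\subset\Delta'$ (up to translation) then $\mathrm{area}(\Delta_i+\Delta)\le\mathrm{area}(\Delta_i+\Delta')$. So it suffices to show that every lattice polygon $\Delta$ that is not a point, not a translate of one of the three listed segments, and not a translate of $\Delta_1$ or $\Delta_2$, contains (up to translation) one of a short finite list of ``minimal'' polygons $\Delta^{(1)},\dots,\Delta^{(r)}$ each satisfying $\mathrm{area}(\Delta_i+\Delta^{(k)})\ge 5/2$, and then check those finitely many minimal cases by hand. The candidate minimal shapes are: the unit square $\Conv\{(0,0),(1,0),(0,1),(1,1)\}$; the length-$2$ segments $\overline{(0,0)(2,0)}$, $\overline{(0,0)(0,2)}$, $\overline{(0,0)(2,2)}$ along the three special directions; the segment $\overline{(0,0)(1,2)}$ (or $\overline{(0,0)(2,1)}$) in a new primitive direction; and the triangle $\Conv\{(0,0),(1,0),(0,1)\}\cup$ an extra point such as $\Conv\{(0,0),(1,0),(-1,1)\}$ that is not a translate of $\Delta_1,\Delta_2$. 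A short case analysis on the edge directions of $\Delta$ — splitting on whether $\Delta$ is one-dimensional or two-dimensional, and in the $2$-dimensional case on whether all its edges point in the three ``special'' directions $(1,0),(0,1),(1,-1)$ — shows that any $\Delta$ outside the excluded list dominates one of these, since the only $2$-dimensional lattice polygons all of whose edges lie in those three directions and which are not translates of $\Delta_1,\Delta_2$ are exactly those containing the unit square, and any polygon with an edge in a different direction or a longer edge contains one of the longer segments.

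The step I expect to be the genuine obstacle is making this domination argument clean rather than an unenlightening enumeration: one must argue carefully that a lattice polygon not containing (a translate of) the unit square and not a translate of $\Delta_1$ or $\Delta_2$ must be one-dimensional — this is where an appeal to the classification of lattice polygons with few interior/boundary points, or a direct argument that a $2$-dimensional lattice polygon contains two lattice triangles of opposite orientation sharing an edge (hence a unit parallelogram), is needed — and then that a one-dimensional lattice polygon of lattice length $\ge 2$, or of lattice length $1$ in a non-special direction, always yields area $\ge 5/2$ after adding $\Delta_i$. Once the finite minimal list is pinned down, the remaining verifications are routine shoelace computations and the monotonicity bound closes the argument.
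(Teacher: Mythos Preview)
Your monotonicity-plus-minimal-shapes framework is broadly in the right spirit, and the paper's proof does use monotonicity implicitly, but there is a concrete gap in your execution. Your key structural claim---that a two-dimensional lattice polygon all of whose \emph{edges} lie in the three special directions $(1,0),(0,1),(1,-1)$ and which is not a translate of $\Delta_1$ or $\Delta_2$ must contain the unit square---is false. The parallelogram $\Conv\{(0,0),(1,0),(0,1),(-1,1)\}$ has all four edges in special directions, is not a translate of $\Delta_1$ or $\Delta_2$, and has area $1$, so it cannot contain a unit square. Your edge-based case split therefore does not cover the two-dimensional case as stated. Relatedly, your ``finite minimal list'' does not obviously capture all primitive segments in non-special directions; you acknowledge these need a direct argument but never supply one, and there are infinitely many such directions, so the list is not actually finite.

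The paper's proof sidesteps both issues with two simple moves. For segments $\Delta=\Conv\{(0,0),(a,b)\}$ it gives a direct geometric bound: if $a>1$ (or by symmetry $|b|>1$) then $\Delta_1+\Delta$ visibly contains a parallelogram of area $a\ge 2$ together with a disjoint triangle of area $1/2$, so the total is at least $5/2$; this reduces the segment case to the four primitive vectors with $0\le a\le 1$, $|b|\le 1$, which are checked by hand. For two-dimensional $\Delta$ the paper looks not at edges but at \emph{all} lattice segments contained in $\Delta$: if any such segment is not a translate of one of the three special ones, monotonicity and the segment case already give area $\ge 5/2$; otherwise one checks easily that a two-dimensional lattice polygon whose every lattice segment is special must be a translate of $\Delta_1$ or $\Delta_2$ (four lattice points already force a non-special segment), leaving only the single direct computation $\mathrm{area}(\Delta_1+\Delta_2)=3$. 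Replacing ``edges'' by ``all lattice segments'' is exactly what repairs your parallelogram counterexample, since that parallelogram contains the non-special diagonal $\overline{(1,0)(-1,1)}$.
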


\begin{proof}
Let us consider the case when $i=1$.
We can deal with the other case in the same way.
First, it is clear that the area of the Minkowski sum of $\Delta_1$ and $\Delta$ is $2$ if $\Delta$ is a translation of $\Delta_1$.
Let us consider the case when $\Delta=\Conv\{(0, 0), (a, b)\}$ $(a, b\in \bZ^2, a\geq 0)$.
We may also assume $b\geq 0$ if $a=0$.
If $a>1$, then $\Delta_1+ \Delta$ contains the parallelogram $\Conv \{(0, 0), (0, 1), (a, 1+b), (a, b)\}$ and the triangle $\Conv \{(a, 1+b), (a, b), (1+a, b)\}$, and hence, the area of $\Delta_1+ \Delta$ is bigger than or equal to $5/2$.
In the same way, if $b>1$, then the area of $\Delta_1+ \Delta$ is bigger than or equal to $5/2$.
Thus, it is sufficient to consider the case when $a\leq 1$ and $|b|\leq 1$.
It is clear that the area of $\Delta_1+ \Delta$ is $3/2$ when $(a, b)=(1, -1), (1, 0), (0, 1)$, and the area of $\Delta_1+ \Delta$ is $5/2$ when $(a, b)=(1, 1)$.
Let us consider the case when $\Delta$ is a two dimensional set.
If $\Delta$ contains a lattice line segment, i.e., a line segment whose both endpoints are lattice points, other than a translation of $\Conv \{(0, 0), (1, 0)\}$, $\Conv \{(0, 0), (0, 1)\}$ and $\Conv \{(1, 0), (0, 1)\}$, then the above shows that the area of $\Delta_1+ \Delta$ is bigger than or equal to $5/2$.
Hence, it is sufficient to consider the case when $\Delta$ is $\Delta_2$.
An easy calculation shows that the area of $\Delta_1+ \Delta_2$ is $3$.
\end{proof}

\begin{lemma}\label{caparea}
Let $\Delta_1$ be the convex hull of the three points $(0, 0)$, $(1, 0)$ and $(0, 1)$, $\Delta_2$ the convex hull of the three points $(1, 0)$, $(0, 1)$ and $(1, 1)$ and $\Delta$ the convex hull of finitely many lattice points.
Assume that $\Delta$ is not a point.
Then, the area of the intersection of the Minkowski sum of $\Delta_1$ and $\Delta$ and a translation of the Minkowski sum of $\Delta_2$ and $\Delta$ is bigger than or equal to $1$.
\end{lemma}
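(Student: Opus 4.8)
The plan is to exhibit, for each $\Delta$, a translation vector $\bv$ with $\mathrm{area}\bigl((\Delta_1+\Delta)\cap(\Delta_2+\Delta+\bv)\bigr)\ge 1$, splitting into two cases according to whether the linear function $x+y$ is constant on $\Delta$ or not.

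First I would try $\bv=\mathbf 0$. Here one uses that $\Delta_1\cap\Delta_2$ is exactly the segment $D:=\overline{(1,0)(0,1)}$, which is immediate from $\Delta_1\subset\{x+y\le 1\}$ and $\Delta_2\subset\{x+y\ge 1\}$, together with the elementary inclusion $(\Delta_1+\Delta)\cap(\Delta_2+\Delta)\supseteq(\Delta_1\cap\Delta_2)+\Delta=D+\Delta$. Pick vertices $p,q$ of $\Delta$ attaining $\min_\Delta(x+y)$ and $\max_\Delta(x+y)$, and set $w:=\max_\Delta(x+y)-\min_\Delta(x+y)$; since $p,q$ are lattice points, $w\in\bZ_{\ge 0}$. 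Then $D+\Delta\supseteq D+\overline{pq}$, and if $w\ge 1$ the vector $q-p$ is not parallel to the direction $(1,-1)$ of $D$, so $D+\overline{pq}$ is a nondegenerate parallelogram of area $|\det((1,-1),\,q-p)|=w\ge 1$. Thus $\bv=\mathbf 0$ already works whenever $w\ge 1$.

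It remains to handle $w=0$, i.e. $x+y$ constant on $\Delta$. Then $\Delta$ lies on a lattice line $x+y=c$ and, being a lattice polytope which is not a point, $\Delta=\overline{A\,(A+k(1,-1))}$ for some $A\in\bZ^2$ and integer $k\ge 1$; in particular $\Delta\supseteq A+S$ where $S:=\overline{(0,0)(1,-1)}$. For every $\bv$ this gives $(\Delta_1+\Delta)\cap(\Delta_2+\Delta+\bv)\supseteq A+\bigl((\Delta_1+S)\cap(\Delta_2+S+\bv)\bigr)$, so it suffices to find one good $\bv$ in the case $\Delta=S$. I would take $\bv=(0,-1)$. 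A direct computation of the Minkowski sums gives $\Delta_1+S=\{x\ge 0,\ y\ge -1,\ 0\le x+y\le 1\}$ and $\Delta_2+S+(0,-1)=\{x\le 2,\ y\le 0,\ 0\le x+y\le 1\}$, whose intersection is $\{(x,y):-1\le y\le 0,\ -y\le x\le 1-y\}$, the parallelogram spanned by $(1,0)$ and $(1,-1)$, of area exactly $1$. This closes the case $w=0$.

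The key observation driving the argument is that the common edge $D=\Delta_1\cap\Delta_2$ points in the direction $(1,-1)$, which makes $\bv=\mathbf 0$ succeed except when $\Delta$ is ``flat'' orthogonally to $D$, i.e. a segment parallel to $(1,-1)$; that single residual family is the only case-specific part. The step I expect to require the most care is precisely that residual case: carrying out the reduction $\Delta\supseteq A+S$ and the computation of $(\Delta_1+S)\cap(\Delta_2+S+(0,-1))$ to confirm it is a unit-area parallelogram.
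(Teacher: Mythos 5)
Your proof is correct and complete; it reaches the same conclusion as the paper but by a genuinely different route. The paper fixes a lattice segment $\Conv\{(0,0),(a,b)\}\subseteq\Delta$ and, after translating $\Delta_2+\Delta$ by $(0,-1)$ (resp.\ $(-1,0)$), overlays the two parallelograms obtained by adding that segment to the horizontal (resp.\ vertical) edge of $\Delta_1$ and to the parallel edge of $\Delta_2$; each has area $|b|$ (resp.\ $|a|$), hence at least $1$. You instead exploit the common edge $D=\Delta_1\cap\Delta_2$ through the inclusion $(\Delta_1\cap\Delta_2)+\Delta\subseteq(\Delta_1+\Delta)\cap(\Delta_2+\Delta)$, which requires no translation at all and disposes of every $\Delta$ on which $x+y$ is non-constant in one stroke, with the determinant computation $|\det((1,-1),q-p)|=w\geq 1$ carried out correctly; the price is the residual family of segments parallel to $(1,-1)$, where your explicit computation of $(\Delta_1+S)\cap(\Delta_2+S+(0,-1))$ as a unit parallelogram checks out. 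Your dichotomy (is $x+y$ constant on $\Delta$ or not?) is arguably tidier than the paper's sign conditions: as literally written, the paper's two cases $a\geq 0,\ b>0$ and $a>0,\ b\geq 0$ do not mention directions with $a<0<b$ (the construction extends to them verbatim, but this is left implicit), whereas your case 1 covers all of these except the single direction $(1,-1)$, which is exactly your case 2. The trade-off is that the paper's argument is uniform in its construction while yours is uniform in its translation vector (namely $\mathbf{0}$) except in one degenerate family; both are equally elementary.
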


\begin{proof}
We can assume that $\Delta$ contains the line segment $\Conv \{(0, 0), (a, b)\}$.
First, let us consider the case when $a\geq 0$ and $b>0$.
Then, $\Delta_1+\Delta$ contains the parallelogram $\Conv \{(0, 0), (1, 0), (1+a, b), (a, b)\}$ and $\Delta_2+\Delta$ contains $\Conv \{(0, 1), (1, 1), (1+a, 1+b), (a, 1+b)\}$.
Therefore, the area of the intersection of $\Delta_1+\Delta$ and the translation of $\Delta_2+\Delta$ by the vector $(0, -1)$ is bigger than or equal to $1$.
In the case when $a> 0$ and $b\geq 0$, $\Delta_1+\Delta$ contains $\Conv \{(0, 0), (0, 1), (a, 1+b), (a, b)\}$ and $\Delta_2+\Delta$ contains $\Conv \{(1, 0), (1, 1), (1+a, 1+b), (1+a, b)\}$, and hence, the area of the intersection of $\Delta_1+\Delta$ and the translation of $\Delta_2+\Delta$ by the vector $(-1, 0)$ is bigger than or equal to $1$.
\end{proof}

\begin{lemma}\label{vol5/3}
Let $\Delta_1$ be the convex hull of the three points $(0, 0)$, $(1, 0)$ and $(0, 1)$, $\Delta_2$ the convex hull of the three points $(1, 0)$, $(0, 1)$ and $(1, 1)$ and $\Delta$ the convex hull of finitely many lattice points.
Assume that both of the areas of $\Delta_1+\Delta$ and of $\Delta_2+\Delta$ is bigger than or equal to $2$.
Then, the volume of the convex hull of $\{(x, y, 0)\in \bR^3\ |\ (x, y)\in \Delta_1+\Delta\}$ and $\{(x, y, 1)\in \bR^3\ |\ (x, y)\in \Delta_2+\Delta\}$ is bigger than or equal to $11/6$.
\end{lemma}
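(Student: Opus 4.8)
The plan is to compute the volume by horizontal slicing. Write $C$ for the body in question, i.e. the convex hull of $\{(x,y,0)\mid (x,y)\in\Delta_1+\Delta\}$ and $\{(x,y,1)\mid (x,y)\in\Delta_2+\Delta\}$, and for $h\in[0,1]$ let $S_h\subset\bR^2$ be the slice $C\cap\{z=h\}$ (identified with a planar set). The standard description of the convex hull of two parallel slices gives $S_h=(1-h)(\Delta_1+\Delta)+h(\Delta_2+\Delta)$ as a Minkowski combination; since $\Delta$ is convex we have $(1-h)\Delta+h\Delta=\Delta$, so $S_h=\Delta+T_h$ with $T_h:=(1-h)\Delta_1+h\Delta_2$. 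Then $\vol(C)=\int_0^1\mathrm{area}(S_h)\,dh$, and everything reduces to understanding $\mathrm{area}(\Delta+T_h)$ as a function of $h$.

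The key computational step is to show that $\mathrm{area}(S_h)=h(1-h)+(1-h)\,\mathrm{area}(\Delta_1+\Delta)+h\,\mathrm{area}(\Delta_2+\Delta)$. Using the planar identity $\mathrm{area}(X+Y)=\mathrm{area}(X)+\mathrm{area}(Y)+2V(X,Y)$ for the bilinear mixed area $V$, together with $V(\Delta,(1-h)\Delta_1+h\Delta_2)=(1-h)V(\Delta,\Delta_1)+hV(\Delta,\Delta_2)$, one reduces to the elementary fact that $\mathrm{area}(T_h)=\tfrac12+h(1-h)$: the hexagon $T_h$ with vertices $(h,0),(1,0),(1,h),(h,1),(0,1),(0,h)$ is the unit square with two corner triangles of legs $h$ and $1-h$ removed. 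Substituting $2V(\Delta,\Delta_i)=\mathrm{area}(\Delta_i+\Delta)-\tfrac12-\mathrm{area}(\Delta)$ and simplifying yields the displayed interpolation formula, a concave quadratic in $h$ agreeing with $\mathrm{area}(\Delta_1+\Delta)$ at $h=0$ and $\mathrm{area}(\Delta_2+\Delta)$ at $h=1$.

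Finally I would integrate: since $\int_0^1 h(1-h)\,dh=\tfrac16$ and $\int_0^1(1-h)\,dh=\int_0^1 h\,dh=\tfrac12$, we obtain $\vol(C)=\tfrac16+\tfrac12\,\mathrm{area}(\Delta_1+\Delta)+\tfrac12\,\mathrm{area}(\Delta_2+\Delta)$, which under the hypothesis $\mathrm{area}(\Delta_i+\Delta)\geq 2$ is at least $\tfrac16+1+1=\tfrac{13}{6}\geq\tfrac{11}{6}$, proving the lemma (with room to spare).

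The main obstacle is making the passage from $\mathrm{area}(S_h)$ to the interpolation formula rigorous without formally developing mixed volumes. If one prefers to keep the argument self-contained, the cleanest replacement is the planar Brunn--Minkowski inequality $\mathrm{area}(A+B)^{1/2}\geq\mathrm{area}(A)^{1/2}+\mathrm{area}(B)^{1/2}$ applied to $A=(1-h)(\Delta_1+\Delta)$ and $B=h(\Delta_2+\Delta)$: this already gives $\mathrm{area}(S_h)\geq\big((1-h)\sqrt2+h\sqrt2\big)^2=2$ for every $h$, hence $\vol(C)\geq 2\geq\tfrac{11}{6}$. One should note that the naive shortcut of bounding $S_h$ below by a single homothet, $(1-h)(\Delta_1+\Delta)$ or $h(\Delta_2+\Delta)$, is \emph{not} enough: their areas $2(1-h)^2$ and $2h^2$ integrate only to $\tfrac76<\tfrac{11}{6}$, so one genuinely needs that the Minkowski sum exceeds each summand.
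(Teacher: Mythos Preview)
Your argument is correct and takes a genuinely different route from the paper. The paper does not slice and integrate; instead it invokes Lemma~\ref{vollem} to translate the top face by some vector $\bv$ without changing the volume, then uses Lemma~\ref{caparea} to choose $\bv$ so that $(\Delta_1+\Delta)$ and $(\Delta_2+\Delta)+\bv$ overlap in a region of area $S\ge 1$, and Lemma~\ref{area2} to conclude that at least one of the two faces has area $\ge 5/2$. Bounding the body from below by the prism over the overlap together with two cones over the leftover pieces gives $\vol\ge S+\tfrac{2-S}{3}+\tfrac{5/2-S}{3}=\tfrac{2S+9}{6}\ge\tfrac{11}{6}$. Your approach is sharper and more self-contained: the exact slice identity $S_h=\Delta+T_h$ and the mixed-area expansion give an \emph{equality} $\vol(C)=\tfrac16+\tfrac12\,\mathrm{area}(\Delta_1+\Delta)+\tfrac12\,\mathrm{area}(\Delta_2+\Delta)$, from which the hypothesis already yields $\tfrac{13}{6}$ (and your Brunn--Minkowski fallback gives $2$), so neither Lemma~\ref{caparea} nor Lemma~\ref{vollem} is needed, and Lemma~\ref{area2} is not used here either. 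The paper's approach, by contrast, is entirely elementary and avoids mixed volumes and Brunn--Minkowski, at the cost of the three auxiliary lemmas and a weaker constant.
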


\begin{proof}
By Lemma \ref{caparea}, there exists a vector $\bv=(v_1, v_2)\in \bR^2$ such that the area $S$ of the intersection of $\Delta_1+\Delta$ and the translation of $\Delta_2+\Delta$ by $\bv$ is bigger than or equal to $1$.
By Lemma \ref{area2}, at least one of the areas of $\Delta_1+\Delta$ and $\Delta_2+\Delta$ is bigger than or equal to $5/2$.
Then, the volume of the convex hull of $\{(x, y, 0)\in \bR^3\ |\ (x, y)\in \Delta_1+\Delta\}$ and $\{(x+v_1, y+v_2, 1)\in \bR^3\ |\ (x, y)\in \Delta_2+\Delta\}$ is bigger than or equal to
\[
S+\frac{2-S}{3}+\left(\frac{5}{2}-S\right)\times\frac{1}{3}=\frac{2S+9}{6}.
\]
Since $S\geq 1$, we have $(2S+9)/6\geq 11/6$.
By Lemma \ref{vollem}, the volume of the convex hull of $\{(x, y, 0)\in \bR^3\ |\ (x, y)\in \Delta_1+\Delta\}$ and $\{(x, y, 1)\in \bR^3\ |\ (x, y)\in \Delta_2+\Delta\}$ is bigger than or equal to $11/6$.
\end{proof}

\begin{proposition}\label{thm_main2}
There is a tropical rational function on $\bR^2$ which has exactly two different minimum volume expressions.
\end{proposition}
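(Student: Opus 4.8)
The plan is to write down an explicit tropical rational function $\varphi$ on $\bR^2$, exhibit two representations $\varphi=f\oslash g=f'\oslash g'$ with $\vol(f,g)=\vol(f',g')$, check that the dual subdivisions of $f\oplus(x_3\odot g)$ and $f'\oplus(x_3\odot g')$ are not translates of one another, and then use Lemmas \ref{vollem}, \ref{area2}, \ref{caparea}, \ref{vol5/3} to show that this common value is the minimum volume and that it is attained by only these two classes of representations. Here $\Delta_1,\Delta_2$ are the unimodular triangles of those lemmas. I would choose $\varphi$ so that it is antisymmetric under a lattice point reflection $\iota\colon(x,y)\mapsto(a-x,b-y)$, i.e. $\varphi\circ\iota=-\varphi$; so that it admits a representation $\varphi=f\oslash g$ in which $\Newt(f)$ is a translate of $\Delta_1+\Delta$ and $\Newt(g)$ a translate of $\Delta_2+\Delta$ for a short lattice segment $\Delta$, with the coefficients of $f$ and $g$ picked so that the dual subdivision of $f\oplus(x_3\odot g)$ is \emph{not} centrally symmetric; and so that the graph of $\varphi$ (read off through Theorem \ref{f/g}) carries enough vertices that $\varphi$ has no representation whose numerator is a trinomial with Newton polytope a translate of $\Delta_1$. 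Concretely this last point means that $V(f)$ and $V(g)$ share a bounded edge that cannot be removed by a common polynomial factor, which is exactly the degree of freedom that makes the representation non-unique.

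Given such a $\varphi$, the second representation is free: writing $\tilde p(x,y):=p(a-x,b-y)$, the relation $\varphi\circ\iota=-\varphi$ gives $\varphi=\tilde g\oslash\tilde f$. The unimodular affine map $(x,y,z)\mapsto(a-x,b-y,c-z)$ carries $\Newt\big(f\oplus(x_3\odot g)\big)$, together with its dual subdivision, onto $\Newt\big(\tilde g\oplus(x_3\odot\tilde f)\big)$ together with its dual subdivision; being volume preserving, it shows $\vol(\tilde g,\tilde f)=\vol(f,g)$, and it exhibits the second dual subdivision as the point reflection of the first, hence --- since the first is not centrally symmetric --- not a translate of it. So $(f,g)$ and $(\tilde g,\tilde f)$ are two genuinely distinct minimum-volume representations.

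For minimality, let $\varphi=F\oslash G$ be an arbitrary representation. Then $f\odot G=g\odot F$ as functions, so $\Newt(f)+\Newt(G)=\Newt(g)+\Newt(F)$ up to translation, i.e. $(\Delta_1+\Delta)+\Newt(G)=(\Delta_2+\Delta)+\Newt(F)$; cancelling $\Delta$ (Minkowski cancellation for convex bodies) and using that $\Delta_1,\Delta_2$ are indecomposable triangles whose edge normals are disjoint, one deduces $\Newt(F)=\Delta_1+C$ and $\Newt(G)=\Delta_2+C$ for some convex lattice polytope $C$. Hence $\vol(F,G)$ equals the volume of $\Conv\big((\Delta_1+C)\times\{0\}\cup(\Delta_2+C)\times\{1\}\big)$, and I minimize over $C$ with the lemmas: by Lemma \ref{area2}, unless $C$ is a point, a translate of one of the three short segments, or a translate of $\Delta_1$ or $\Delta_2$, both $\mathrm{area}(\Delta_1+C)$ and $\mathrm{area}(\Delta_2+C)$ are at least $5/2\ge2$, so Lemma \ref{vol5/3} gives $\vol(F,G)\ge 11/6$; when $C$ is a translate of $\Delta_1$ or $\Delta_2$ the two areas are $2$ and $3$, again $\ge2$, so Lemma \ref{vol5/3} gives $\vol(F,G)\ge11/6$; the remaining finitely many cases ($C$ a point or a short segment) are evaluated directly, after using Lemma \ref{vollem} to move $\Newt(G)$ into a normal position over $\Newt(F)$, yielding volume $2/3$ for a point and $5/3$ for each short segment. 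Since $\varphi$ admits no representation with $C$ a point, the minimum volume is $5/3=\vol(f,g)$, attained only when $C$ is a short segment; a final check of which short segments $C$ and which dual subdivisions of the resulting $3$-dimensional polytope are compatible with the divisor condition $V(F)\oslash V(G)=V(f)\oslash V(g)$ and with the actual values of $\varphi$ leaves exactly the two classes of $(f,g)$ and $(\tilde g,\tilde f)$.

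The main obstacle is this last compatibility analysis, together with the verification that $\varphi$ has no ``small'' representation: both require pushing features of the single function $\varphi$ --- the positions of the vertices of its graph and its behaviour at infinity --- through Theorem \ref{f/g} into rigid constraints on the Newton polytope and dual subdivision of an arbitrary numerator/denominator pair, and in particular into the statement that the shared bounded edge of $V(f)$ and $V(g)$ can be distributed between numerator and denominator in essentially only two volume-minimal ways. The estimates of Lemmas \ref{area2}--\ref{vol5/3} are precisely what make the resulting case analysis finite; the antisymmetry of $\varphi$ guarantees the minimum is attained at least twice, and the delicate point is showing it is attained at most twice.
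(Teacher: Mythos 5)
Your outline shares the paper's skeleton --- reduce an arbitrary representation $\varphi=F\oslash G$ to $\Newt(F)=\Delta_1+C$, $\Newt(G)=\Delta_2+C$, then use Lemmas \ref{area2}, \ref{caparea} and \ref{vol5/3} to force $\vol(F,G)\geq 11/6$ unless $C$ is a point or one of the three short segments, and evaluate the remaining cases ($2/3$ for a point, $5/3$ for a short segment; your numbers are correct). Your Minkowski-cancellation route to $\Newt(F)=\Delta_1+C$ via the disjointness of the edge normals of $\Delta_1$ and $\Delta_2$ is a clean alternative to the paper's balancing-condition argument, and the antisymmetry trick $\varphi\circ\iota=-\varphi$ giving a second representation $(\tilde g,\tilde f)$ is a genuinely different mechanism from the paper's, where the two representations come from two \emph{different} short segments $C$ rather than from a numerator/denominator swap (note that your swap preserves the translation class of $C$, so your two representations would live over the same segment, which changes the combinatorics of the final count).

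The gap is that this is a blueprint, not a proof: the proposition asserts the \emph{existence} of a function with these properties, and you never produce one. You list desiderata for $\varphi$ (antisymmetry, a non-centrally-symmetric dual subdivision, no trinomial numerator) without showing they are simultaneously realizable, and the two steps that carry all the content are explicitly deferred --- namely (i) that $\varphi$ admits no representation with $C$ a point, and (ii) that the minimum $5/3$ is attained by exactly two translation classes of dual subdivisions. You name these as ``the main obstacle'' and ``the delicate point,'' which is an accurate self-diagnosis. The paper resolves (i) by writing down $f_1,g_1,f_2,g_2$ explicitly and observing that the divisor $V(F)\oslash V(G)$ must contain three specified weight-one rays that no translate of $V(x\oplus y\oplus 0)$ can contain, and it attacks (ii) by enumerating the eight subdivisions of $\Conv\{(0,0),(2,0),(1,1),(0,1)\}$ to eliminate the segment $\Conv\{(0,0),(1,0)\}$. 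Without a concrete $\varphi$ and these verifications, the argument does not establish the statement.
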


\begin{proof}
Let $f_1=xy\oplus (-1)y^2\oplus x\oplus y\oplus 0$ and $g_1=(-1)xy^2\oplus xy\oplus (-1)y^2\oplus x\oplus y$.
Then, the tropical curves and $V(f_1)\oslash V(g_1)\in \Div$ are as in Figure \ref{f1g1}.
On the other hand, the tropical polynomials $f_2=x^2\oplus xy\oplus (-1)y^2\oplus x\oplus (-1)y$ and $g_2=(-1)x^2y\oplus (-1)xy^2\oplus x^2\oplus xy\oplus (-1)y^2$ satisfy $f_1\oslash g_1=f_2\oslash g_2$ as functions.
Note that $V(f_1)\oslash V(g_1)$ and $V(f_2)\oslash V(g_2)$ contain the three rays $R_1:=\{(t, t+1)\ |\ t\geq 0\}$, $R_2:=\{(0, -t)\ |\ t\geq 0\}$ and $R_3:=\{(-t, 0)\ |\ t\geq 0\}$ with the coefficients $1$.
An easy calculation shows that $\vol(f_1, g_1)=\vol(f_2, g_2)=5/3$.

Let $(f, g)\in \cF(f_1\oslash g_1)$.
Then, $V(f)$ must contain the three rays $R_1$, $R_2$ and $R_3$.
For a sufficiently large $d\in \bR_{>0}$, the set $V(f) \setminus U_d$ is a disjoint union of finite rays $L_1, \dots, L_n$.
We may assume that $L_1$ has an unbounded intersection with $R_1$, $L_2$ with $R_2$ and $L_3$ with $R_3$.
If $n=3$, then $\Newt(f)$ must be a translation of $\Delta_1:=\Conv\{(0, 0), (1, 0), (0, 1)\}$, and hence, $V(f)$ is a a translation of $V(x\oplus y\oplus 0)$.
However, a translation of $V(x\oplus y\oplus 0)$ cannot contain $R_1\cup R_2\cup R_3$.
Therefore, we have $n\geq 4$.

The direction vectors of the rays $L_1, \dots, L_n$ satisfy the balancing condition.
Since the three rays $L_1$, $L_2$ and $L_3$ satisfy the balancing condition, the other rays $L_4, \dots, L_n$ also do.
Then, by Lemma \ref{bunkai}, $\Newt(f)$ can be expressed as the Minkowski sum of $\Delta_1$ and a convex lattice polytope $\Delta$ other than a point.
Similarly, $\Newt(g)$ can be expressed as a translation of the Minkowski sum of $\Conv\{(1, 0), (0, 1), (1, 1)\}$ and $\Delta$.

There are exactly eight subdivisions of $\Delta':=\Conv\{(0, 0), (2, 0), (1, 1), (0, 1)\}$ (see Figure \ref{f8}).
None of the dual tropical curves can contain the three rays $R_1$, $R_2$ and $R_3$, and hence, $\Newt(f)$ is not a translation of $\Delta'$.
It follows that $\Delta$ is not a translation of $\Conv\{(0, 0), (1, 0)\}$.
If $\Delta$ is a translation of $\Conv\{(0, 0), (1, 1)\}$, then both of the areas of $\Newt(f)$ and $\Newt(g)$ are $5/2$.
If $\Delta$ is a two dimensional set, then by Lemma \ref{area2}, both of the areas of $\Newt(f)$ and $\Newt(g)$ are bigger than or equal to $2$.
In both cases, we have $\vol(f, g)\geq 11/6>5/3$ by Lemma \ref{vol5/3}.
It follows that $\varphi=f_1\oslash g_1$ and $\varphi=f_2\oslash g_2$ are the minimum volume expressions of $\varphi$.
\end{proof}

\begin{figure}[H]
\centering
\begin{tikzpicture}
\coordinate (L1) at (6,-1);
\coordinate (L2) at (6.5,-1);
\coordinate (L3) at (6,-0.5);
\coordinate (L4) at (6.5,-0.5);
\coordinate (L5) at (6,0);
\coordinate (L6) at (8.5,-1);
\coordinate (L7) at (8,-0.5);
\coordinate (L8) at (8.5,-0.5);
\coordinate (L9) at (8,0);
\coordinate (L10) at (8.5,0);
\draw (L1)--(L2)--(L4)--(L5)--cycle;
\draw (L3)--(L4);
\draw (L6)--(L10)--(L9)--(L7)--cycle;
\draw (L7)--(L8);
\coordinate (A1) at (0,-1);
\coordinate (A2) at (0.5,-1);
\coordinate (A3) at (-0.5,-0.5);
\coordinate (A4) at (0.5,-0.5);
\coordinate (A5) at (1.5,-0.5);
\coordinate (A6) at (-0.5,0);
\coordinate (A7) at (0.5,0);
\coordinate (A8) at (1.5,0);
\coordinate (A9) at (0.5,0.5);
\coordinate (A10) at (1,0.5);
\coordinate (B1) at (3,-1);
\coordinate (B2) at (3.5,-1);
\coordinate (B3) at (2.5,-0.5);
\coordinate (B4) at (3.5,-0.5);
\coordinate (B5) at (4.5,-0.5);
\coordinate (B6) at (2.5,0);
\coordinate (B7) at (3.5,0);
\coordinate (B8) at (4.5,0);
\coordinate (B9) at (3.5,0.5);
\coordinate (B10) at (4,0.5);

\draw[thick, dotted] (A1)--(A4);
\draw[thick, dotted] (A8)--(A7)--(A9);
\draw[thick] (A7)--(A10);
\draw[thick] (A2)--(A4)--(A3);
\draw (A6)--(A7)--(A4)--(A5);
\coordinate [label=below:\text{$\Delta_{f_1}$}] (a) at (6.35,-1.25);
\coordinate [label=below:\text{$\Delta_{g_1}$}] (b) at (8.35,-1.25);
\coordinate [label=below:\text{$V(f_1)$ and $V(g_1)$}] (c) at (0.5,-1.25);
\draw[thick, dotted] (B1)--(B4);
\draw[thick, dotted] (B8)--(B7)--(B9);
\draw[thick] (B7)--(B10);
\draw[thick] (B2)--(B4)--(B3);
\coordinate [label=below:\text{$V(f_1)\oslash V(g_1)$}] (c) at (3.5,-1.25);
\end{tikzpicture}
\caption{Tropical curves $V(f_1)$ and $V(g_1)$.}
\label{f1g1}
\end{figure}

\begin{figure}[H]
\centering
\begin{tikzpicture}
\coordinate (L1) at (5.5,-0.5);
\coordinate (L2) at (6,-1);
\coordinate (L3) at (6.5,-1);
\coordinate (L4) at (6,-0.5);
\coordinate (L5) at (5.5,0);
\coordinate (L6) at (7.5,0);
\coordinate (L7) at (8,-0.5);
\coordinate (L8) at (8.5,-1);
\coordinate (L9) at (8.5,-0.5);
\coordinate (L10) at (8,0);
\draw (L1)--(L2)--(L3)--(L5)--cycle;
\draw (L2)--(L4);
\draw (L6)--(L8)--(L9)--(L10)--cycle;
\draw (L7)--(L10);
\coordinate (A1) at (0,-1);
\coordinate (A2) at (0.5,-1);
\coordinate (A3) at (-0.5,-0.5);
\coordinate (A4) at (0.5,-0.5);
\coordinate (A5) at (1.5,-0.5);
\coordinate (A6) at (-0.5,0);
\coordinate (A7) at (0.5,0);
\coordinate (A8) at (1.5,0);
\coordinate (A9) at (0.5,0.5);
\coordinate (A10) at (1,0.5);
\coordinate (A11) at (-0.5,-1);
\coordinate (A12) at (1.5,0.5);
\coordinate (B1) at (3,-1);
\coordinate (B2) at (3.5,-1);
\coordinate (B3) at (2.5,-0.5);
\coordinate (B4) at (3.5,-0.5);
\coordinate (B5) at (4.5,-0.5);
\coordinate (B6) at (2.5,0);
\coordinate (B7) at (3.5,0);
\coordinate (B8) at (4.5,0);
\coordinate (B9) at (3.5,0.5);
\coordinate (B10) at (4,0.5);
\draw[thick, dotted] (A1)--(A4);
\draw[thick, dotted] (A8)--(A7)--(A9);
\draw[thick] (A7)--(A10);
\draw[thick] (A2)--(A4)--(A3);
\draw (A7)--(A11);
\draw (A4)--(A12);
\coordinate [label=below:\text{$\Delta_{f_2}$}] (a) at (6,-1.25);
\coordinate [label=below:\text{$\Delta_{g_2}$}] (b) at (8,-1.25);
\coordinate [label=below:\text{$V(f_2)$ and $V(g_2)$}] (c) at (0.5,-1.25);
\draw[thick, dotted] (B1)--(B4);
\draw[thick, dotted] (B8)--(B7)--(B9);
\draw[thick] (B7)--(B10);
\draw[thick] (B2)--(B4)--(B3);
\coordinate [label=below:\text{$V(f_2)\oslash V(g_2)$}] (c) at (3.5,-1.25);
\end{tikzpicture}
\caption{Tropical curves $V(f_2)$ and $V(g_2)$.}
\label{f2g2}
\end{figure}

\begin{figure}[H]
\centering
\begin{tikzpicture}
\coordinate (L1) at (0.5,3.25);
\coordinate (L2) at (1,2.75);
\coordinate (L3) at (0.5,2.75);
\coordinate (L4) at (0,2.75);
\coordinate (L5) at (0,3.25);
\draw (L1)--(L2)--(L4)--(L5)--cycle;
\coordinate (A1) at (1.5,3);
\coordinate (A2) at (2,3);
\coordinate (A3) at (2.5,3.5);
\coordinate (A4) at (2,2.5);
\coordinate (A5) at (2,3.5);
\draw (A1)--(A2)--(A3);
\draw (A4)--(A5);

\coordinate (R1) at (5,3.25);
\coordinate (R2) at (5.5,2.75);
\coordinate (R3) at (5,2.75);
\coordinate (R4) at (4.5,2.75);
\coordinate (R5) at (4.5,3.25);
\draw (R1)--(R2)--(R4)--(R5)--cycle;
\draw (R1)--(R3);
\coordinate (B1) at (6,3);
\coordinate (B2) at (7,3);
\coordinate (B3) at (7.5,3.5);
\coordinate (B4) at (7,2.5);
\coordinate (B5) at (6.5,2.5);
\coordinate (B6) at (6.5,3.5);
\draw (B1)--(B2)--(B3);
\draw (B2)--(B4);
\draw (B5)--(B6);

\coordinate (L1) at (10,3.25);
\coordinate (L2) at (10.5,2.75);
\coordinate (L3) at (10,2.75);
\coordinate (L4) at (9.5,2.75);
\coordinate (L5) at (9.5,3.25);
\draw (L1)--(L2)--(L4)--(L5)--cycle;
\draw (L3)--(L5);
\coordinate (A1) at (11,3);
\coordinate (A2) at (11.5,3);
\coordinate (A3) at (12,3.5);
\coordinate (A4) at (11.75,2.5);
\coordinate (A5) at (11.75,3.5);
\coordinate (A6) at (11.5,2.5);
\draw (A1)--(A2)--(A3);
\draw (A4)--(A5);
\draw (A2)--(A6);

\coordinate (R1) at (0.5,1.75);
\coordinate (R2) at (1,1.25);
\coordinate (R3) at (0.5,1.25);
\coordinate (R4) at (0,1.25);
\coordinate (R5) at (0,1.75);
\draw (R1)--(R2)--(R4)--(R5)--cycle;
\draw (R1)--(R4);
\coordinate (B1) at (1.5,1.5);
\coordinate (B2) at (2,1.5);
\coordinate (B3) at (2.25,1.25);
\coordinate (B4) at (2.5,1.5);
\coordinate (B5) at (2.25,1);
\coordinate (B6) at (2,2);
\draw (B1)--(B2)--(B3)--(B4);
\draw (B3)--(B5);
\draw (B2)--(B6);

\coordinate (R1) at (5,1.75);
\coordinate (R2) at (5.5,1.25);
\coordinate (R3) at (5,1.25);
\coordinate (R4) at (4.5,1.25);
\coordinate (R5) at (4.5,1.75);
\draw (R1)--(R2)--(R4)--(R5)--cycle;
\draw (R2)--(R5);
\coordinate (B1) at (6,1.5);
\coordinate (B2) at (6.5,1.5);
\coordinate (B3) at (6.625,1.75);
\coordinate (B4) at (6.875,2);
\coordinate (B5) at (6.625,2);
\coordinate (B6) at (6.5,1);
\draw (B1)--(B2)--(B3)--(B4);
\draw (B3)--(B5);
\draw (B2)--(B6);

\coordinate (L1) at (10,1.75);
\coordinate (L2) at (10.5,1.25);
\coordinate (L3) at (10,1.25);
\coordinate (L4) at (9.5,1.25);
\coordinate (L5) at (9.5,1.75);
\draw (L1)--(L2)--(L4)--(L5)--cycle;
\draw (L3)--(L5);
\draw (L3)--(L1);
\coordinate (A1) at (11,1.5);
\coordinate (A2) at (11.25,1.5);
\coordinate (A3) at (11.5,1.75);
\coordinate (A4) at (11.75,1.75);
\coordinate (A5) at (12,2);
\coordinate (A6) at (11.75,1);
\coordinate (A7) at (11.5,2);
\coordinate (A8) at (11.25,1);
\draw (A1)--(A2)--(A3)--(A4)--(A5);
\draw (A4)--(A6);
\draw (A3)--(A7);
\draw (A2)--(A8);

\coordinate (R1) at (0.5,0.25);
\coordinate (R2) at (1,-0.25);
\coordinate (R3) at (0.5,-0.25);
\coordinate (R4) at (0,-0.25);
\coordinate (R5) at (0,0.25);
\draw (R1)--(R2)--(R4)--(R5)--cycle;
\draw (R1)--(R4);
\draw (R1)--(R3);
\coordinate (B1) at (1.5,0);
\coordinate (B2) at (2,0);
\coordinate (B3) at (2.25,-0.25);
\coordinate (B4) at (2.5,-0.25);
\coordinate (B5) at (2.75,0);
\coordinate (B6) at (2,0.5);
\coordinate (B7) at (2.25,-0.5);
\coordinate (B8) at (2.5,-0.5);
\draw (B1)--(B2)--(B3)--(B4)--(B5);
\draw (B2)--(B6);
\draw (B3)--(B7);
\draw (B4)--(B8);

\coordinate (R1) at (5,0.25);
\coordinate (R2) at (5.5,-0.25);
\coordinate (R3) at (5,-0.25);
\coordinate (R4) at (4.5,-0.25);
\coordinate (R5) at (4.5,0.25);
\draw (R1)--(R2)--(R4)--(R5)--cycle;
\draw (R2)--(R5);
\draw(R3)--(R5);
\coordinate (B1) at (6,-0.25);
\coordinate (B2) at (6.5,-0.25);
\coordinate (B3) at (6.75,0);
\coordinate (B4) at (6.875,0.25);
\coordinate (B5) at (7.125,0.5);
\coordinate (B6) at (6.5,-0.75);
\coordinate (B7) at (6.75,-0.75);
\coordinate (B8) at (6.875,0.5);
\draw (B1)--(B2)--(B3)--(B4)--(B5);
\draw (B2)--(B6);
\draw (B3)--(B7);
\draw (B4)--(B8);
\end{tikzpicture}
\caption{The eight subdivision of $\Delta'$ and the shapes of their dual tropical curves.}
\label{f8}
\end{figure}
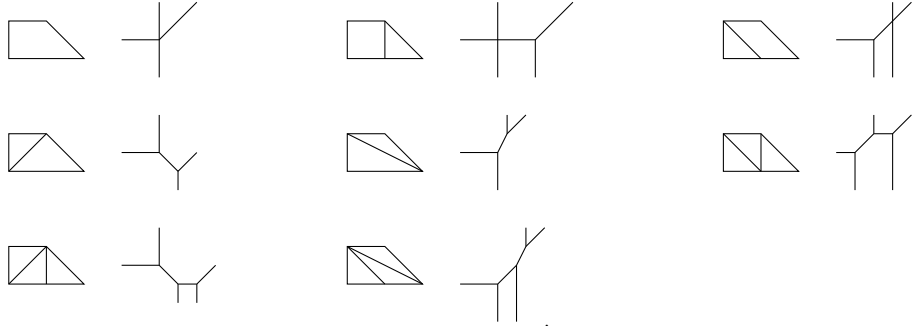

\begin{remark}
Let us consider the minimal factorization complexity for the tropical rational function in the above proof.
Let $f_1$, $f_2$, $g_1$ and $g_2$ be as in the proof of Proposition \ref{thm_main2}.
They can be factorized as follows:
\begin{eqnarray*}
f_1&=&(x\oplus (-1)y\oplus 0)\odot(y\oplus 0)=:f_{11}\odot f_{12},\\
f_2&=&(x\oplus y\oplus 0)\odot(x\oplus (-1)y)=:f_{21}\odot f_{22},\\
g_1&=&(xy\oplus x\oplus y)\odot((-1)y\oplus 0)=:g_{11}\odot g_{12},\\
g_2&=&((-1)xy\oplus x\oplus (-1)y)\odot(x\oplus y)=:g_{21}\odot g_{22}.
\end{eqnarray*}
Then, it holds that $\fComp(f_{11}, f_{12})=\dots =\fComp(g_{21}, g_{22})=4$.
Let $h_{11}, \dots, h_{1k}$, $h_{21}, \dots, h_{2l}$ be tropical polynomials satisfying $(h_{11}\odot \dots \odot h_{1k})\oslash (h_{21}\odot \dots \odot h_{2l})=f_1\oslash g_1$ as functions.
We can assume that each polynomial $h_{ij}$ satisfies $\mComp(h_{ij})\geq 2$ since monomials do not affect the factorization complexity.
If $k=1$, then, by the proof of Proposition \ref{thm_main2}, we have $\fComp(h_{11})=\mComp(h_{11})\geq 4$.
If $k=2$ and $\fComp(h_{11}, h_{12})\leq 3$, then $\mComp(h_{11})=\mComp(h_{12})=2$, however it is impossible that two lines contain the three rays $R_1$, $R_2$ and $R_3$ in the proof of Proposition \ref{thm_main2}.
Thus, we have $\fComp(h_{11}, h_{12})\geq 4$ if $k=2$.
If $k\geq 3$, then $\fComp(h_{11}, \dots, h_{1k})=\sum_{i=1}^{k}\mComp(h_{1i})-(k-1)\geq 2k-(k-1)=k+1\geq 4$.
Therefore, it holds that $\fComp(h_{11}, \dots, h_{1k})\geq 4$.
We can show that $\fComp(h_{21}, \dots, h_{2l})\geq 4$ in the same way.
Hence, $(f_{11}\odot f_{12})\oslash (g_{11}\odot g_{12})$ and $(f_{21}\odot f_{22})\oslash (g_{21}\odot g_{22})$ are also minimum in the sense of the factorization complexity.
\end{remark}

From the above remark, one would expect that a pair of tropical polynomials such that it is the minimum volume expression of a tropical rational function has the smallest factorization complexity, but unfortunately this is not true.

\begin{proposition}
There is a tropical rational function $\varphi$ that has the unique minimum volume expression $\varphi=f\oslash g$, but none of the pairs of factorizations of $f$ and $g$ has the smallest factorization complexity.
\end{proposition}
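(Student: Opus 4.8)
The plan is to produce an explicit tropical rational function $\varphi$ on $\bR^2$, together with its reduced representation $\varphi=f\oslash g$, and to check three things in turn: that $(f,g)$ is the \emph{unique} minimum volume representation; that every factorization of $f$ and every factorization of $g$ has factorization complexity bounded below by a certain pair $(A,B)$; and that $\varphi$ admits a second representation $\varphi=f'\oslash g'$, necessarily of strictly larger volume, whose numerator and denominator split into pieces realizing a factorization complexity strictly below $(A,B)$ in the preorder $\leq_{\fComp}$. The first and third points together force the conclusion, and the shape of $\varphi$ must be engineered so that all three hold simultaneously.

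For the uniqueness of the minimum volume representation I would argue as in the proof of Proposition \ref{thm_main2}. The recession rays of $V(f)$, with their weights, are determined by $\varphi$ itself, hence are shared by every representation of $\varphi$; since multiplying numerator and denominator by a common non-unit strictly increases the volume, it suffices to rule out the ``genuinely different'' representations coming from the non-uniqueness of curve decomposition. Applying the balancing condition (Theorem \ref{MS3.3.2}) together with Lemma \ref{bunkai} forces $\Newt(f')$, for any such representation $f'\oslash g'$, to contain a prescribed Minkowski summand, and likewise for $\Newt(g')$; the area and volume estimates of Lemmas \ref{vollem}, \ref{area2}, \ref{caparea} and \ref{vol5/3} (or direct analogues tailored to the chosen $\varphi$) then give $\vol(f',g')>\vol(f,g)$ for every representation other than $(f,g)$. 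For the factorization complexity I would first enumerate the factorizations of $f$ and of $g$: by Lemma \ref{Jos4.6} these correspond to decompositions of the weighted curves $V(f)$ and $V(g)$ into weighted sub-curves, and each sub-curve must itself satisfy the balancing condition to be a tropical curve; running through this finite list of balanced groupings yields the lower bound $(A,B)$. Then I would exhibit $f'\oslash g'$ in which the numerator curve $V(f')$, although it contains $V(f)$, decomposes into balanced sub-curves whose total monomial complexity is smaller than that of any balanced decomposition of $V(f)$, and similarly for $g'$; this is exactly the mechanism of Proposition \ref{thm_main3}, now exploited to push the factorization complexity below $(A,B)$ at the cost of a larger Newton polytope.

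The hard part is the factorization-complexity analysis. Unlike the monomial complexity, which is read off directly from the Newton polytope, the factorization complexity is governed by the combinatorics of which sub-collections of the edges of $V(f)$ are \emph{simultaneously} balanced; this balancing obstruction is precisely what makes the phenomenon possible (enlarging the curve can unlock a balanced, hence realizable, cheaper grouping that the smaller curve forbids), but it also makes the verification delicate. One must rule out every grouping of $V(f)$ and $V(g)$ cheaper than $(A,B)$ while at the same time producing a genuinely cheaper grouping for the enlarged pair $V(f')$, $V(g')$ — and do so while keeping the enlargement strictly volume-increasing, so that $(f,g)$ remains the unique minimum volume representation. Designing $\varphi$ so that all of these constraints are met at once is the main obstacle; once such a $\varphi$ is in hand, the proof reduces to the finite case checks described above.
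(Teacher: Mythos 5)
Your outline correctly identifies the structure of the argument (a unique minimum-volume pair $(f,g)$ whose factorizations are all expensive, beaten in factorization complexity by a larger-volume representation $f'\oslash g'$), and this is indeed the strategy the paper follows. But the proposition is a pure existence statement, so the explicit example \emph{is} the proof, and your proposal defers exactly that: you end by acknowledging that ``designing $\varphi$ so that all of these constraints are met at once is the main obstacle'' and that the proof only ``reduces to finite case checks'' \emph{once such a $\varphi$ is in hand}. As written, nothing has been proved — you have described what a proof would look like without supplying the object whose existence is claimed, and you give no evidence that the three constraints (unique minimum volume, uniformly expensive factorizations of $f$ and $g$, a strictly cheaper factorization of some other representation) are simultaneously satisfiable.

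For reference, the paper's example is quite small: $f_1=x^2\oplus xy\oplus y^2\oplus x\oplus y$, $g_1=xy\oplus x\oplus y$, with the competing representation $f_2=x^2\oplus xy\oplus y^2\oplus x\oplus y\oplus 0=(x\oplus y\oplus 0)^{\odot 2}$, $g_2=xy\oplus x\oplus y\oplus 0=(x\oplus 0)\odot(y\oplus 0)$. Here $\vol(f_1,g_1)=7/6<5/3=\vol(f_2,g_2)$ and uniqueness of the minimum follows from the balancing/area estimates you cite, while $\fComp(f_2)=3$ and $\fComp(x\oplus 0,y\oplus 0)=3$ beat the value $4$ attained by every factorization of $f_1$ and of $g_1$. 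Note also one mechanism your outline misses: the gain for the numerator does not come from ``unlocking a balanced grouping'' at all — adding the constant monomial turns $f_1$ into a perfect square whose \emph{monomial} complexity already drops from $4$ to $3$, no factorization needed. So your proposed mechanism is not wrong, but it is not the only one, and in any case the proof is incomplete until a concrete $\varphi$ is produced and the finite case checks are actually carried out.
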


\begin{proof}
Let $f_1=x^2\oplus xy\oplus y^2\oplus x\oplus y$, $g_1=xy\oplus x\oplus y$ and $\varphi=f_1\oslash g_1$.
Then, the tropical curves and $V(f_1)\oslash V(g_1)\in \Div$ are as in Figure \ref{lf1g1}.
On the other hand, the tropical polynomials $f_2=x^2\oplus xy\oplus y^2\oplus x\oplus y\oplus 0$ and $g_2=xy\oplus x\oplus y\oplus 0$ satisfy $f_1\oslash g_1=f_2\oslash g_2$ as functions.
Note that $V(f_1)\oslash V(g_1)$ and $V(f_2)\oslash V(g_2)$ contain the ray $R_1:=\{(t, t)\ |\ t\geq 0\}$ with the coefficient $2$.
An easy calculation shows that $\vol(f_1, g_1)=7/6<5/3=\vol(f_2, g_2)$.

Let $(f, g)\in \cF(f_1\oslash g_1)$.
Then, $V(f)$ must contain the three rays $R_1$, $R_2:=\{(-t, 0)\ |\ t\geq 0\}$ and $R_3:=\{(0, -t)\ |\ t\geq 0\}$.
If $\Newt(f)$ is a translation of $\Newt(f_1)$, then by considering all types of divisions, we can see that $V(f)=V(f_1)$, and hence, $V(g)=V(g_1)$.
If $\Newt(f)$ is a translation of $\Newt(f_2)$, then $V(f)=V(f_2)$ and $V(g)=V(g_2)$.
Therefore, we can assume that $\Newt(f)$ is neither a translation of $\Newt(f_1)$ nor $\Newt(f_2)$.
Since the union of $R_1$, $R_2$ and $R_3$ is balanced, $V(f)-R_1-R_2-R_3$ is also balanced, and hence, can be considered as a tropical plane curve.
Let $V(h)$ be the tropical plane curve corresponding to $V(f)-R_1-R_2-R_3$.
Then, $\Newt(f)$ is the Minkowski sum of $\Newt(h)$ and $\Conv\{(0, 0), (1, 0), (0, 1)\}$.
Here, $\Newt(f)$ is neither a translation of $\Conv\{(0, 0), (1, 0), (1, 1), (0, 2)\}$ nor $\Conv\{(0, 0), (2, 0), (1, 1), (0, 1)\}$ since $V(f)\oslash V(g)$ contains the ray $R_1$ with the coefficient $2$.
Thus, the area of $\Newt(f)$ is bigger than or equal to $5/2$ by Lemma \ref{area2}.
It is clear that $\Newt(g)$ is a two-dimensional set.
If the area of $\Newt(g)$ is $1/2$, then it must be a translation of $\Conv\{(1, 0), (0, 1), (1, 1)\}$, and hence $\Newt(f)$ is a a translation of $\Newt(f_1)$.
Thus, we may assume that the area of $\Newt(g)$ is bigger than or equal to $1$.
We may also assume that the area $S$ of the intersection of $\Newt(f)$ and $\Newt(g)$ is positive by a translation.
Then, the volume of the convex hull of $\{(x, y, 0)\in \bR^3\ |\ (x, y)\in \Newt(f)\}$ and $\{(x, y, 1)\in \bR^3\ |\ (x, y)\in \Newt(g)\}$ is bigger than or equal to
\[
S+\frac{1-S}{3}+\left(\frac{5}{2}-S\right)\times\frac{1}{3}=\frac{2S+7}{6}.
\]
Since $S>0$, we have $(2S+7)/6>7/6$.
Thus, we have $\vol(f, g)>7/6$ in this case, and hence, $f_1\oslash g_1$ is the only minimum volume expression of $\varphi$.

By looking at the tropical plane curve $V(f_1)$, it is clear that $f$ has only two decompositions, namely the obvious one and $f_1=(x\oplus y\oplus 0)\odot (x\oplus y)$.
The polynomial $g_2$ can be decomposed as $g_2=(x\oplus 0)\odot (y\oplus 0)$.
Since we have
\begin{eqnarray*}
&&\fComp(x^2\oplus xy\oplus y^2\oplus x\oplus y)=4,\\
&&\fComp(x\oplus y\oplus 0, x\oplus y)=4,\\
&&\fComp(x^2\oplus xy\oplus y^2\oplus x\oplus y\oplus 0)=3,\\
&&\fComp(x\oplus 0, y\oplus 0)=3,
\end{eqnarray*}
it follows that the factorization complexity of $f_2\oslash ((x\oplus 0)\odot (y\oplus 0))$ is smaller than that of $f_1\oslash g_1$ and $((x\oplus y\oplus 0)\odot (x\oplus y))\oslash g_1$.
Thus, a pair of tropical polynomials such that it is the minimum volume expression of a tropical rational function does not necessarily have the smallest factorization complexity.
\end{proof}

\begin{figure}[H]
\centering
\begin{tikzpicture}
\coordinate (L1) at (6,0);
\coordinate (L2) at (7,-1);
\coordinate (L3) at (6.5,-1);
\coordinate (L4) at (6,-0.5);
\coordinate (L5) at (6,0);
\coordinate (L6) at (8.5,-1);
\coordinate (L7) at (8,-0.5);
\coordinate (L8) at (8.5,-0.5);
\coordinate (L9) at (8,0);
\coordinate (L10) at (8.5,0);
\draw (L1)--(L2)--(L3)--(L4)--cycle;
\draw (L6)--(L7)--(L8)--cycle;
\coordinate (A1) at (0,-0.5);
\coordinate (A2) at (0.5,-0.5);
\coordinate (A3) at (0.5,-1);
\coordinate (A4) at (0,-1);
\coordinate (A5) at (1,0);
\coordinate (A6) at (0.5,0);
\coordinate (A7) at (1,-0.5);
\coordinate (B1) at (3,-0.5);
\coordinate (B2) at (3.5,-0.5);
\coordinate (B3) at (3.5,-1);
\coordinate (B4) at (3,-1);
\coordinate (B5) at (4,0);
\coordinate (B6) at (3.5,0);
\coordinate (B7) at (4,-0.5);
\draw[thick, dotted] (A6)--(A2)--(A7);
\draw[thick] (0.5,-0.47)--(1,0.03);
\draw[thick] (0.525,-0.5)--(1.025,0);
\draw[thick] (A1)--(A2)--(A3);
\draw (A2)--(A4);
\coordinate [label=below:\text{$\Delta_{f_1}$}] (a) at (6.5,-1.25);
\coordinate [label=below:\text{$\Delta_{g_1}$}] (b) at (8.35,-1.25);
\coordinate [label=below:\text{$V(f_1)$ and $V(g_1)$}] (c) at (0.5,-1.25);
\draw[thick, dotted] (B6)--(B2)--(B7);
\draw[thick] (3.5,-0.47)--(4,0.03);
\draw[thick] (3.525,-0.5)--(4.025,0);
\draw[thick] (B1)--(B2)--(B3);
\coordinate [label=below:\text{$V(f_1)\oslash V(g_1)$}] (c) at (3.5,-1.25);
\end{tikzpicture}
\caption{Tropical curves $V(f_1)$ and $V(g_1)$.}
\label{lf1g1}
\end{figure}
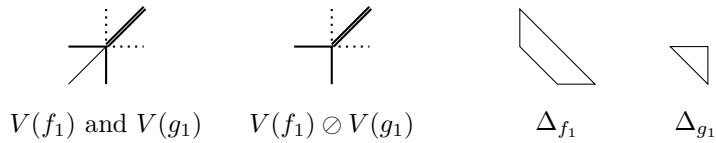

\begin{figure}[H]
\centering
\begin{tikzpicture}
\coordinate (L1) at (6,0);
\coordinate (L2) at (7,-1);
\coordinate (L3) at (6,-1);
\coordinate (L4) at (6,-0.5);
\coordinate (L5) at (6,0);
\coordinate (L6) at (8.5,-1);
\coordinate (L7) at (8,-0.5);
\coordinate (L8) at (8.5,-0.5);
\coordinate (L9) at (8,-1);
\coordinate (L10) at (8.5,0);
\draw (L1)--(L2)--(L3)--(L4)--cycle;
\draw (L6)--(L8)--(L7)--(L9)--cycle;
\coordinate (A1) at (0,-0.5);
\coordinate (A2) at (0.5,-0.5);
\coordinate (A3) at (0.5,-1);
\coordinate (A4) at (0,-1);
\coordinate (A5) at (1,0);
\coordinate (A6) at (0.5,0);
\coordinate (A7) at (1,-0.5);
\coordinate (B1) at (3,-0.5);
\coordinate (B2) at (3.5,-0.5);
\coordinate (B3) at (3.5,-1);
\coordinate (B4) at (3,-1);
\coordinate (B5) at (4,0);
\coordinate (B6) at (3.5,0);
\coordinate (B7) at (4,-0.5);
\draw[thick, dotted] (A6)--(A2)--(A7);
\draw[thick] (0.5,-0.47)--(1,0.03);
\draw[thick] (0.525,-0.5)--(1.025,0);
\draw[thick] (A1)--(A2)--(A3);
\draw (0,-0.54)--(0.46,-0.54)--(0.46,-1);
\coordinate [label=below:\text{$\Delta_{f_1}$}] (a) at (6.5,-1.25);
\coordinate [label=below:\text{$\Delta_{g_1}$}] (b) at (8.35,-1.25);
\coordinate [label=below:\text{$V(f_1)$ and $V(g_1)$}] (c) at (0.5,-1.25);
\draw[thick, dotted] (B6)--(B2)--(B7);
\draw[thick] (3.5,-0.47)--(4,0.03);
\draw[thick] (3.525,-0.5)--(4.025,0);
\draw[thick] (B1)--(B2)--(B3);
\coordinate [label=below:\text{$V(f_2)\oslash V(g_2)$}] (c) at (3.5,-1.25);
\end{tikzpicture}
\caption{Tropical curves $V(f_2)$ and $V(g_2)$.}
\label{lf2g2}
\end{figure}

\end{document}